\let\origsection=\section \def\section{\@ifstar{\origsection*}{\mysection}} 
\def\mysection{\@startsection{section}{1}\z@{.7\linespacing\@plus\linespacing}{.5\linespacing}{\normalfont\scshape\centering\S}}
\numberwithin{equation}{section}
\numberwithin{figure}{section}
\let\polishlcross=\l
\def\l{\ifmmode\ell\else\polishlcross\fi}
\def\paragraph#1{%
  \smallskip
  \noindent\textbf{#1.}\enspace}
\def\moverlay{\mathpalette\mov@rlay}
\def\mov@rlay#1#2{\leavevmode\vtop{   \baselineskip\z@skip \lineskiplimit-\maxdimen
   \ialign{\hfil$\m@th#1##$\hfil\cr#2\crcr}}}
\newcommand{\charfusion}[3][\mathord]{
    #1{\ifx#1\mathop\vphantom{#2}\fi
        \mathpalette\mov@rlay{#2\cr#3}
      }
    \ifx#1\mathop\expandafter\displaylimits\fi}
\theoremstyle{plain}
\newtheorem{thm}{Theorem}[section]
    \crefname{thm}{Theorem}{Theorems}
\newtheorem{theorem}[thm]{Theorem}
    \crefname{theorem}{Theorem}{Theorems}
\newtheorem{lemma}[thm]{Lemma}
    \crefname{lemma}{Lemma}{Lemmas}
    \crefname{lem}{Lemma}{Lemmas}
\newtheorem{corollary}[thm]{Corollary}
    \crefname{corollary}{Corollary}{Corollaries}
    \crefname{cor}{Corollary}{Corollaries}
\newtheorem{proposition}[thm]{Proposition}
    \crefname{proposition}{Proposition}{Propositions}
    \crefname{prop}{Proposition}{Propositions}
    \crefname{problem}{Problem}{Problems}
\newtheorem{conjecture}[thm]{Conjecture}
    \crefname{conjecture}{Conjecture}{Conjectures}
\newtheorem{observation}[thm]{Observation}
    \crefname{observation}{Observation}{Observations}
\newtheorem{question}[thm]{Question}
    \crefname{question}{Question}{Questions}
\newtheorem*{claim*}{Claim}
\newtheorem{claim}{Claim}[]
    \crefname{claim}{Claim}{Claims}
    \crefname{clm}{Claim}{Claims}
\newtheorem*{case*}{Case}
    \crefname{case}{Case}{Case}
\newtheorem{thm-intro}{Theorem}[]
    \crefname{thm-intro}{Theorem}{Theorems}
\newtheorem{conj-intro}[thm-intro]{Conjecture}
    \crefname{conj-intro}{Conjecture}{Conjectures}
\newtheorem{question-intro}[thm-intro]{Question}
    \crefname{question-intro}{Question}{Questions}
\theoremstyle{definition}
    \crefname{definition}{Definition}{Definitions}
    \crefname{remark}{Remark}{Remarks}
    \crefname{remarks}{Remarks}{Remarks}
    \crefname{situation}{Situation}{Situations}
    \crefname{construction}{Construction}{Constructions}
    \crefname{construction}{Example}{Examples}
\newtheorem*{example*}{Example}
\newenvironment{subproof}[1][Proof.]{%
    \begin{proof}[{#1}]%
        }{%
    \end{proof}}
\DeclareFontFamily{U}  {MnSymbolC}{}
\DeclareSymbolFont{MnSyC}         {U}  {MnSymbolC}{m}{n}
\DeclareFontShape{U}{MnSymbolC}{m}{n}{
    <-6>  MnSymbolC5
   <6-7>  MnSymbolC6
   <7-8>  MnSymbolC7
   <8-9>  MnSymbolC8
   <9-10> MnSymbolC9
  <10-12> MnSymbolC10
  <12->   MnSymbolC12}{}
\DeclareMathSymbol{\powerset}{\mathord}{MnSyC}{180}
\let\emptyset=\varnothing
\let\setminus=\smallsetminus
\newcommand*{\abs}[1]{\ensuremath{{\left\lvert {#1} \right\rvert}}}
\DeclareMathOperator{\dist}{dist}
\DeclareMathOperator{\rt}{root}
\DeclareMathOperator{\lv}{lv}
\DeclareMathOperator{\supp}{sp}
\newcommand{\tin}{\mathsf{tree} \textnormal{-} \alpha}
\begin{document}

\author[J.~Ahn]{Jungho Ahn$^1$}
\address{$^1$Korea Institute for Advanced Study, Seoul, South~Korea}
\email{junghoahn@kias.re.kr}

\author[J.~P.~Gollin]{J.~Pascal Gollin$^2$}
\address{$^2$FAMNIT, University of Primorska, Koper, Slovenia}
\email{\tt pascal.gollin@famnit.upr.si}

\author[T.~Huynh]{Tony Huynh$^3$}
\address{$^3$Department of Computer Science, Sapienza University of Rome, Rome, Italy}
\email{\tt tony.bourbaki@gmail.com}

\author[O.~Kwon]{O-joung Kwon$^{4,5}$}
\address{$^4$Department of Mathematics, Hanyang University, Seoul, South Korea}
\address{$^5$Discrete Mathematics Group, Institute for Basic Science (IBS), Daejeon, South Korea}
\email{ojoungkwon@hanyang.ac.kr}

\title{A coarse Erd\H{o}s-P\'{o}sa theorem}
\thanks{%
All authors are corresponding authors.
The first author was supported by the KIAS Individual Grant (CG095301) at Korea Institute for Advanced Study. %
The second author was supported by the Institute for Basic Science (IBS-R029-Y3) and in part by the Slovenian Research and Innovation Agency (research project N1-0370). %
The third author was partially supported by Progetti Grandi di Ateneo Programme, Grant RG123188B3F7414A (ASGARD). %
The fourth author is supported by the National Research Foundation of Korea (NRF) grant funded by the Ministry of Science and ICT (No. NRF-2021K2A9A2A11101617 and No. RS-2023-00211670) and by the Institute for Basic Science (IBS-R029-C1). %
}

\date{January 10, 2025}

\begin{abstract}
    An \emph{induced packing} of cycles in a graph is a set of vertex-disjoint cycles with no edges between them.
    We generalise the classic Erd\H{o}s-P\'osa theorem to induced packings of cycles.
    More specifically, we show that there exist functions~${f(k,\ell) = \mathcal{O}(\ell k\log k)}$ and~${g(k)=\mathcal{O}(k\log k)}$ such that for all integers~${k\geq1}$ and ${\ell\geq3}$, every graph~$G$ contains either an induced packing of~$k$ cycles of length at least~$\ell$, not necessarily induced cycles, or sets~$X_1$ and~$X_2$ of vertices with $\abs{X_1}\leq f(k,\ell)$ and $\abs{X_2}\leq g(k)$ such that, after removing the closed neighbourhood of~$X_1$ or the ball of radius~$\ell$ around~$X_2$, the resulting graph has no cycle of length at least~$\ell$ in~$G$.
    Our proof is constructive and yields a polynomial-time algorithm finding either the induced packing or the sets~$X_1$ and~$X_2$ when~$\ell$ is a constant.
    Furthermore, we show that for every positive integer~$d$, if a graph~$G$ does not contain two cycles at distance more than~$d$, then~$G$ contains sets~$X_1$ and~$X_2$ of vertices with $\abs{X_1}\leq12(d+1)$ and $\abs{X_2}\leq12$ such that, after removing the ball of radius~$2d$ around~$X_1$ or the ball of radius~$3d$ around~$X_2$, the resulting graphs are forests.

    As a corollary, we prove that every graph with no $K_{1,t}$ induced subgraph and no induced packing of~$k$ cycles of length at least~$\ell$ has tree-independence number at most $\mathcal{O}(t\ell k\log k)$, and one can construct a corresponding tree-decomposition in polynomial time when~$\ell$ is a constant.
    This resolves a special case of a conjecture of Dallard et al. (arXiv:2402.11222), and implies that on such graphs, many NP-hard problems, such as \textsc{Maximum Weight Independent Set}, \textsc{Maximum Weight Induced Matching}, \textsc{Graph Homomorphism}, and \textsc{Minimum Weight Feedback Vertex Set}, are solvable in polynomial time. On the other hand, we show that the class of all graphs with no $K_{1,3}$ induced subgraph and no two cycles at distance more than~$2$ has unbounded tree-independence number. 
\end{abstract}

\keywords{cycles, graph distance, induced subgraphs, induced minors, Erd\H{o}s-P\'osa property}
\subjclass[2020]{05C38, 05C70, 05C85}

\maketitle

\section{Introduction}

The classic theorem of Erd\H{o}s and P\'{o}sa~\cite{ErdosP1965} asserts that there exists a function \linebreak ${f(k) = \mathcal{O}(k \log k)}$ such that for every positive integer~$k$, every graph~$G$ contains either~$k$ vertex-disjoint cycles or a set~$X$ of at most~$f(k)$ vertices such that~$X$ intersects all cycles in~$G$. 
This result has motivated a long line of research for finding similar dualities for other combinatorial objects.
These are too numerous to list here, but we refer the interested reader to the survey of Raymond and Thilikos~\cite{RaymondT2017}. 

Recently, induced variants of Menger's theorem have received considerable interest~\cite{AlbrechtsenHTJKW2024,GeorgakopoulosP2023,GartlandKL2023,HendreyNST2023,NguyenSS2024-a}. 
For a set~$X$ of vertices in a graph~$G$, let $B_G(X,d)$ be the set of vertices in~$G$ whose distance from~$X$ is at most~$d$. 
Albrechtsen, Huynh, Jacobs, Knappe, and Wollan~\cite{AlbrechtsenHTJKW2024} and independently Georgakopoulos and Papasoglu~\cite{GeorgakopoulosP2023} proved that there exists a function~${f(d) = \mathcal{O}(d)}$ such that for every positive integer~${d}$, every graph~$G$, and every pair $(X,Y)$ of subsets of~$V(G)$, $G$ contains either two ${(X,Y)}$-paths which are at distance at least~$d$ or a vertex~$v$ such that~${B_G(\{v\},f(d))}$ hits all ${(X,Y)}$-paths. 
Both~\cite{AlbrechtsenHTJKW2024} and~\cite{GeorgakopoulosP2023} conjectured that this can be generalised so that one can find either~$k$ ${(X,Y)}$-paths which are pairwise at distance at least~$d$ or a set~$S$ of at most~${k-1}$ vertices such that~${B_G(S,f(d))}$ hits all ${(X,Y)}$-paths. 
Nguyen, Scott, and Seymour~\cite{NguyenSS2024-a} disproved this conjecture for~${(d,k) = (3,3)}$, but it is still open for ${d = 2}$ and general~$k$. 
They also proposed a new conjecture by relaxing the size of~$S$: 
there are functions~$f$ and~$g$ such that for every pair~$(k,d)$ of positive integers and every graph~$G$ with vertex sets~$X$ and~$Y$, $G$ contains either~$k$ $(X,Y)$-paths which are pairwise at distance at least~$d$ or a set~$S$ of at most~${g(k,d)}$ vertices such that~${B_G(S,f(k,d))}$ hits all ${(X,Y)}$-paths. 
This conjecture still remains open. 

Motivated by these results, we ask whether there is an induced packing version of the classic theorem of Erd\H{o}s and P\'{o}sa for cycles.  
A set~$\mathcal{F}$ of vertex-disjoint cycles in a graph~$G$ is an \emph{induced packing} of~$G$ if there is no edge of~$G$ between any two distinct cycles in~$\mathcal{F}$.
Induced packings of cycles~\cite{AtminasKR2016,BonnetDGTW2023,Bonamy2024,NguyenSS2024-b,ChudnovskySSS2023} or odd cycles~\cite{GolovachKPT2012,Bonamy2021,DvorakP2023} have already been considered in the literature.
The natural analogue of the Erd\H{o}s-P\'{o}sa theorem for induced packings is whether there are a function~$f$ and a constant~$c$ such that whenever~$G$ has no induced packing of~$k$ cycles, there is a set~$X$ of at most~${f(k)}$ vertices such that~${B_G(X,c)}$ hits all cycles of~$G$.
Observe that~$c$ should be at least~$1$, because complete graphs have no induced packing of two cycles, but the class of complete graphs has arbitrary large feedback vertex set number. 

Our first main result answers the above question affirmatively with a best possible~$f$ and~${c=1}$.
We actually prove a stronger statement considering long cycles.

\begin{restatable}{theorem}{mainone}
    \label{thm:main1}
    There exist functions~${f(k,\ell) = \mathcal{O}(\ell k\log k)}$ and~${g(k)=\mathcal{O}(k\log k)}$ such that for all integers ${k\geq1}$ and ${\ell\geq3}$ and every graph~$G$, one can find in $\abs{V(G)}^{\mathcal{O}(\ell)}$ time either an induced packing of~$k$ cycles of length at least~$\ell$ or sets~$X_1$ and~$X_2$ of vertices with $\abs{X_1}\leq f(k,\ell)$ and $\abs{X_2}\leq g(k)$ such that neither~${G-B_G(X_1,1)}$ nor ${G-B_G(X_2,\ell)}$ has a cycle of length at least~$\ell$.
\end{restatable}

We remark that the long cycles in the theorem might not be induced cycles.
Here is a proof that the functions~$f$ and~$g$ in Theorem~\ref{thm:main1} are best possible up to a multiplicative constant for any fixed integer~$\ell\geq3$.
Erd\H{o}s and P\'osa~\cite{ErdosP1965} showed that there is no function~${f(k) = o(k \log k)}$ that works for the original Erd\H{o}s-P\'osa theorem. 
For an arbitrary graph~$G$, let~$H$ be the graph obtained from~$G$ by subdividing each edge~${2\ell}$ times.
Then every cycle in~$H$ is of length at least $3(2\ell+1)>\ell$, and every set of vertex-disjoint cycles in~$H$ is an induced packing of cycles in~$H$.
Note that~$H$ has an induced packing of~$k$ cycles if and only if~$G$ has~$k$ vertex-disjoint cycles.
In addition, all of the following are equal to each other:
\begin{itemize}
    \item the feedback vertex set number of~$G$,
    \item the minimum size of a set~${X_1 \subseteq V(H)}$ such that $B_H(X_1,1)$ hits all cycles in~$H$, and
    \item the minimum size of a set~${X_2 \subseteq V(H)}$ such that $B_H(X_2,\ell)$ hits all cycles in~$H$.
\end{itemize}
Thus, all functions~$f$ and~$g$ that work for Theorem~\ref{thm:main1} also work for the original Erd\H{o}s-P\'osa theorem.
Therefore, for any fixed integer~$\ell\geq3$, there are no functions $f(k,\ell)=o(k\log k)$ and $g(k)=o(k\log k)$ that work for Theorem~\ref{thm:main1}.
The above reduction also shows that Theorem~\ref{thm:main1} implies the original Erd\H{o}s-P\'osa theorem. 
It is open whether there is a function $f(k,\ell)=o(\ell k\log k)$ that works for Theorem~\ref{thm:main1} when we regard~$\ell$ as a variable.

We say that a graph is \emph{$H$-free} if it does not contain an induced subgraph isomorphic to~$H$. 
Theorem~\ref{thm:main1} implies that every $K_{1,t}$-free graph having no large induced packing of cycles has bounded tree-independence number.
Recently, Yolov~\cite{DBLP:conf/soda/Yolov18} and Dallard et al.~\cite{DallardMS2024} independently introduced this parameter, and it has already garnered significant interest~\cite{DallardMS2024,DallardKKMMW2024,AbrishamiACHSV2024,LimaMMIRS2024,ChudnovskyHLS2024,ChudnovskyHT2024,DBLP:conf/soda/Yolov18,LimaMMIRS2024}. 
A graph~$H$ is an \emph{induced minor} of a graph~$G$ if~$H$ is isomorphic to a graph~$H'$ obtained from~$G$ by contracting edges and deleting vertices.
We say that~$G$ is \emph{$H$-induced-minor-free} if~$G$ does not contain~$H$ as an induced minor. 
Dallard et al.~\cite[Conjecture 7.2]{DallardKKMMW2024} conjectured that for every planar graph~$H$, the class of $K_{1,t}$-free and $H$-induced-minor-free graphs has bounded tree-independence number. 
Our result shows that this conjecture is true when~$H$ is the disjoint union of cycles of length~$\ell$ for every fixed integer ${\ell\geq3}$.

\begin{restatable}{corollary}{tinbound}\label{cor:K1tfree}
    Let~$k$, $\ell$, and~$t$ be positive integers and let~$G$ be a $K_{1,t}$-free graph.
    Then in~$\abs{V(G)}^{\mathcal{O}(\ell)}$ time, one can either find an induced packing of~$k$ cycles of length at least~$\ell$ in~$G$, or construct a tree-decomposition of~$G$ with independence number at most~${\mathcal{O}(t\ell k\log k)}$. 
\end{restatable}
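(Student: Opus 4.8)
The plan is to deduce Corollary~\ref{cor:K1tfree} directly from Theorem~\ref{thm:main1} together with the $K_{1,t}$-free hypothesis, essentially by inflating a tree-decomposition of a forest. First I would run the polynomial-time algorithm of Theorem~\ref{thm:main1} on $G$. If it outputs an induced packing of $k$ cycles, we are done; otherwise it outputs a set $X \subseteq V(G)$ with $\abs{X} \le f(k) = \mathcal{O}(k\log k)$ such that $G - B_G(X,1)$ is a forest. Note that $B_G(X,1)$ is exactly the closed neighbourhood $N_G[X] = \bigcup_{x\in X} N_G[x]$.

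The key observation is that $K_{1,t}$-freeness forces $B_G(X,1)$ to have small independence number. Indeed, for every vertex $x$, an independent set of size $t$ in $N_G(x)$ together with $x$ would induce $K_{1,t}$, so $\alpha(G[N_G(x)]) \le t-1$ and hence $\alpha(G[N_G[x]]) \le t$. Any independent set of $G[B_G(X,1)]$ meets each $N_G[x]$ in an independent set of size at most $t$, so $\alpha(G[B_G(X,1)]) \le t\abs{X} \le t f(k) = \mathcal{O}(tk\log k)$. (This is precisely where the constant $c=1$ in Theorem~\ref{thm:main1} is essential: for radius $2$ the ball around a single vertex of a $K_{1,3}$-free graph can already have unbounded independence number, consistent with the last result quoted in the introduction.)

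Next I would assemble the tree-decomposition. Since $G - B_G(X,1)$ is a forest, it has a tree-decomposition $(T, \{W_t\}_{t\in V(T)})$ of width at most $1$, computable in linear time, and in particular $\alpha(G[W_t]) \le 2$ for each $t$. Inflate every bag by setting $W_t' := W_t \cup B_G(X,1)$. Then $(T, \{W_t'\}_{t\in V(T)})$ is a tree-decomposition of $G$: every vertex of $B_G(X,1)$ lies in all bags and every other vertex keeps its connected set of bags, so the vertex and connectivity conditions hold; an edge inside $B_G(X,1)$ lies in every bag, an edge inside $G - B_G(X,1)$ lies in some $W_t \subseteq W_t'$, and an edge with exactly one endpoint $u \notin B_G(X,1)$ lies in any bag $W_t' \supseteq W_t \ni u$. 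Finally $\alpha(G[W_t']) \le \alpha(G[W_t]) + \alpha(G[B_G(X,1)]) \le 2 + t f(k) = \mathcal{O}(tk\log k)$, so this tree-decomposition has the desired independence number, and every step is polynomial-time.

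I do not expect a genuine obstacle here: the corollary is short bookkeeping once Theorem~\ref{thm:main1} is available. The only points that require any care are the use of $K_{1,t}$-freeness to bound $\alpha(G[B_G(X,1)])$ and the routine verification that inflating every bag of a tree-decomposition of $G - B_G(X,1)$ by the fixed vertex set $B_G(X,1)$ yields a valid tree-decomposition of $G$ — both sketched above.
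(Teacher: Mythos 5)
Your proposal is correct and follows essentially the same route as the paper: invoke Theorem~\ref{thm:main1}, bound $\alpha(G[B_G(X,1)])$ by roughly $t\,\abs{X}$ via $K_{1,t}$-freeness of each closed neighbourhood, and add $B_G(X,1)$ to every bag of a width-$1$ tree-decomposition of the remaining forest. The only difference is cosmetic (your bound $t f(k)+2$ versus the paper's $(t-1)f(k)+2$), both $\mathcal{O}(tk\log k)$.
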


We remark that for integers~${t \geq 3}$ and~${k \geq 2}$, the class of $K_{1,t}$-free graphs has unbounded tree-independence number, and also the class of graphs containing no induced packing of~$k$ cycles of length at least~$\ell$ has unbounded tree-independence number, see~\cite[Theorem~7.3]{DallardMS2024}. 
Thus, both conditions are necessary to bound the tree-independence number in Corollary~\ref{cor:K1tfree}. 

Bonamy et al.~\cite{Bonamy2024} constructed a class of $K_{2,3}$-free graphs without an induced packing of two cycles which has unbounded tree-independence number.
This implies that the $K_{1,t}$-free condition in Corollary~\ref{cor:K1tfree} cannot be replaced by a $K_{s,t}$-free condition for $s\geq2$ and $t\geq3$.
Moreover, the same construction shows that for every graph~$F$ with two adjacent vertices of degree at least~$4$,  we cannot replace the $K_{1,t}$-free condition by an $F$-free condition.

Corollary~\ref{cor:K1tfree} implies that a number of NP-hard problems, such as \textsc{Maximum Weight Independent Set}, \textsc{Maximum Weight Induced Matching}, and \textsc{Graph Homomorphism}, can be solved in polynomial time on every class of~$K_{1,t}$-free graphs containing no induced packing of~$k$ cycles of length at least~$\ell$~\cite{DBLP:conf/soda/Yolov18,DallardMS2024}. 
Bonamy et al.~{\cite[Conjecture~1.3]{Bonamy2024}} conjectured that \textsc{Maximum Independent Set} can be solved in polynomial time on graphs having no induced packing of~$k$ cycles, and our result shows that this conjecture holds for~$K_{1,t}$-free graphs. They proved that \textsc{Maximum Independent Set} can be solved in quasi-polynomial time on graphs having no induced packing of~$k$ cycles.
Lima et al.~\cite{LimaMMIRS2024} obtained an algorithmic metatheorem for the problem of finding a maximum-weight sparse  induced subgraph satisfying a fixed property expressible in counting monadic second-order logic (CMSO$_2$). 
This metatheorem implies that the \textsc{Minimum Weight Feedback Vertex Set} and \textsc{Minimum Weight Odd Cycle Transversal} can be solved in polynomial time on every class of $K_{1,t}$-free graphs containing no induced packing of~$k$ cycles. 
All these problems are known to be NP-hard on $K_{1,t}$-free graphs for some small constant~$t$. For example, \textsc{Maximum Independent Set} is NP-hard on planar graphs of maximum degree~$3$~\cite{GareyJS1976}, which are~${K_{1,4}}$-free. 

Korhonen~\cite{Korhonen2023} proved the grid induced minor theorem which shows that for every class~$\mathcal{C}$ of graphs of bounded maximum degree, if a graph~${G \in \mathcal{C}}$ has sufficiently large tree-width, then it contains a large grid as an induced minor. 
This can be used to show that if~${G \in \mathcal{C}}$ has no large induced packing of cycles, then~$G$ has a small feedback vertex set, whose size depends on the grid induced minor theorem. 
\Cref{thm:main1} with $\ell=3$ gives a tighter bound. 

\begin{corollary}\label{cor:boundeddegree}
    Let~$k$ and~$t$ be positive integers and let~$G$ be a graph of maximum degree at most~$t$.
    Then one can find in polynomial time either an induced packing of~$k$ cycles or a set~$X$ of at most~${\mathcal{O}(tk \log k)}$ vertices such that~${G-X}$ is a forest. 
\end{corollary}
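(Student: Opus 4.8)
The plan is to invoke \Cref{thm:main1} and then use the degree bound to turn its ``ball hitting set'' into an ordinary feedback vertex set. First I would run the polynomial-time algorithm of \Cref{thm:main1} on the input graph~$G$ with parameter~$k$. If it returns an induced packing of~$k$ cycles, we are done immediately. Otherwise it returns a set~${X' \subseteq V(G)}$ with ${\abs{X'} \leq f(k) = \mathcal{O}(k \log k)}$ such that ${G - B_G(X',1)}$ is a forest.

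Now define ${X = B_G(X',1) = N_G[X']}$. Since every vertex of~$G$ has at most~$t$ neighbours, each vertex of~$X'$ contributes at most ${t+1}$ vertices to~$N_G[X']$, so ${\abs{X} \leq (t+1)\abs{X'} = \mathcal{O}(tk \log k)}$. By the choice of~$X'$, the graph ${G - X = G - B_G(X',1)}$ is a forest, and~$X$ is obtained from~$X'$ in linear time, so the whole procedure is polynomial. This gives the claimed dichotomy.

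There is essentially no obstacle: the only ingredient beyond \Cref{thm:main1} is the elementary fact that in a graph of maximum degree at most~$t$, the closed neighbourhood of a set of size~$s$ has size at most ${(t+1)s}$. The point of the statement is really the quantitative comparison with the bound one would get by combining Korhonen's grid induced minor theorem~\cite{Korhonen2023} with a feedback-vertex-set extraction, but no additional argument is needed for the corollary itself.
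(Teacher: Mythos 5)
Your proposal is correct and is exactly the intended derivation: apply \Cref{thm:main1}, and in the second outcome replace~$X'$ by its closed neighbourhood, which has size at most~${(t+1)\abs{X'}}$ by the degree bound. The paper leaves this routine step implicit, so there is nothing further to compare.
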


We remark that Bonamy et al.~\cite{Bonamy2024} recently proved that for every graph~$G$ having no~$K_{t,t}$ as a subgraph, if~$G$ has no large induced packing of cycles, then the feedback vertex set number of~$G$ is at most logarithmic in the number of vertices of~$G$. 

On restricted classes of graphs, we prove a slightly stronger statement than Theorem~\ref{thm:main1}.

\begin{theorem}\label{thm:main2}
    Let~$\mathcal{C}$ be a class of graphs closed under taking subgraphs and let~$h$ be a function such that for every positive integer~$k$, every graph~${G \in \mathcal{C}}$ contains either~$k$ vertex-disjoint cycles or a set~$X$ of at most~$h(k)$ vertices such that~${G-X}$ is a forest.
    Then for every graph~${G \in \mathcal{C}}$ and an integer $\ell\geq3$, one can find in~$\abs{V(G)}^{\mathcal{O}(\ell)}$ time either an induced packing of~$k$ cycles of length at least~$\ell$ or sets~$X_1$ and~$X_2$ of vertices with
    \begin{align*}
        \abs{X_1}&<(28\ell-21)h(k)+(74\ell-54)k-78\ell+57,\\
        \abs{X_2}&<7h(k)+20k-21
    \end{align*}
    such that neither~${G-B_G(X_1,1)}$ nor ${G-B_G(X_2,\ell)}$ has a cycle of length at least~$\ell$.
\end{theorem}

For planar graphs, we obtain a tighter bound when $\ell=3$.

\begin{restatable}{theorem}{planar}\label{thm:main3}
    Let $k$ be a positive integer and let~$G$ be a planar multigraph.
    Then one can find in polynomial time either an induced packing of~$k$ cycles or a set~$X$ of at most~$5k$ vertices such that~${G-B_G(X,1)}$ is a forest. 
\end{restatable}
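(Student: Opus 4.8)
The plan is an induction in which each step pays at most six vertices towards~$X$ and removes one cycle from every induced packing, with a planar geometric lemma handling the case where the cycles ``stack up'' instead.

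First I would isolate the one piece of planar geometry that drives everything. Fix a plane embedding of~$G$ and a point~$p$ of its outer face; for a cycle~$C$ let~$\Delta(C)$ be the closed disc it bounds that avoids~$p$. If $C_1,\dots,C_{2k-1}$ are pairwise vertex-disjoint cycles with $\Delta(C_1)\subsetneq\Delta(C_2)\subsetneq\cdots\subsetneq\Delta(C_{2k-1})$, then $C_1,C_3,\dots,C_{2k-1}$ form an induced packing of~$k$ cycles: they are vertex-disjoint by hypothesis, and any edge of~$G$ joining~$C_{2a-1}$ to~$C_{2b-1}$ with $a<b$ would run from a point of the open disc bounded by~$C_{2a}$ to a point of its exterior, hence would have to cross the cycle $C_{2a}\subseteq G$ — impossible in a plane graph. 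So whenever $G$ contains a chain of $2k-1$ pairwise vertex-disjoint nested cycles, we may immediately output the first outcome.

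For the main loop I would reduce to the case that $G$ is $2$-connected and embedded in the plane (blocks handled separately, with a short argument that the bounds add up correctly), and then repeatedly look at an innermost cycle. Concretely, take a bounded face~$f$ whose boundary cycle~$C$ encloses no other vertex of~$G$, together with the cycles of~$G$ that pass close to~$C$. Because~$f$ is innermost, planarity forces the configuration in the annular region just outside~$C$ to have bounded complexity; I would use this to produce a set~$S$ with $\abs{S}\le 6$ such that $G':=G-B_G(S,1)$ has strictly smaller maximum induced packing than~$G$, while the cycle~$C$ itself is at distance more than~$1$ from all later innermost cycles chosen in the loop. Iterating at most $k-1$ times either reaches a forest — leaving $X$, the union of the chosen sets~$S$, of size at most $6(k-1)\le 6k$ — or, after $k$ iterations, yields $k$ pairwise non-adjacent innermost cycles, an induced packing of~$k$ cycles; and if at some stage the required set~$S$ of size at most~$6$ fails to exist, the cycles near~$C$ are forced to nest and the geometric lemma again gives an induced packing of~$k$ cycles. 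Every object here (innermost faces, the set~$S$, the reductions, a maximum induced packing) is computable in polynomial time, which gives the stated algorithm; finiteness of the recursion is clear since $G'$ is a proper subgraph.

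The crux — and the step I expect to be hardest — is the existence of the bounded-size set~$S$ at an innermost cycle and the proof that deleting its closed neighbourhood genuinely lowers the induced packing number rather than merely reshuffling the cycles; this is precisely where planarity must be used quantitatively (an innermost cycle can ``interact'' with only $\mathcal O(1)$ further cycles before nesting is forced), and where the constant~$6$ has to be pinned down, one expecting a contribution of~$3$ from the planar Erd\H{o}s--P\'osa constant that governs the shallow innermost band and a factor~$2$ from passing to the closed neighbourhood. Extra care is needed for the cut vertices that appear when one peels off outer cycles, since then the outer boundary is only a closed walk rather than a cycle. A cruder variant — contract each cycle of a maximum vertex-disjoint packing to a single vertex, observe that the resulting graph is planar, and apply the four colour theorem to bound the packing number linearly in the induced packing number — already proves the statement with $\mathcal O(k)$ in place of~$6k$, and can stand in if the sharp bookkeeping proves too delicate.
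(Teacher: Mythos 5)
Your proposal has a genuine gap at its central step. Everything hinges on the claim that at an innermost cycle~$C$ one can always find a set~$S$ with~$\abs{S}\leq 6$ such that deleting~$B_G(S,1)$ strictly decreases the maximum induced packing (or else nesting is forced), but you never construct~$S$ or prove either alternative; you explicitly flag this as ``the step I expect to be hardest,'' and the heuristic accounting for the constant~$6$ (``$3$ from the planar Erd\H{o}s--P\'osa constant times $2$ for the closed neighbourhood'') does not correspond to an argument. Since this step \emph{is} the theorem, the main line of the proposal is not a proof. Your nested-cycles lemma is correct but is never actually invoked in a verifiable way, because the dichotomy ``either $S$ exists or the nearby cycles nest'' is not established. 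The fallback you offer is sound --- contracting the cycles of a maximum vertex-disjoint packing yields a planar graph, the four colour theorem extracts an independent quarter of them as an induced packing, and then the planar Erd\H{o}s--P\'osa theorem gives a genuine feedback vertex set --- but it only yields roughly~$12k$ (a packing of $m$ disjoint cycles with no induced packing of $k$ forces $m\leq 4k-1$, and the planar bound is $3$ per cycle), so it proves a weaker statement than the one claimed.

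For comparison, the paper gets the constant~$6$ by a different and entirely explicit mechanism. It takes the maximal $2$-edge-connected subgraphs $G_1,\dots,G_m$ of~$G$ (pairwise vertex-disjoint, at most one edge between any two), picks a leaf~$G_i$ of the resulting forest so that at most one vertex~$L$ of~$G_i$ has a neighbour outside, suppresses degree-$2$ vertices in~$G_i$, and applies the lemma of Ma, Yu, and Zang that a $2$-edge-connected planar graph of minimum degree at least~$3$ has girth at most~$5$. The corresponding cycle~$C$ in~$G_i$ carries at most~$5$ vertices~$A$ of degree at least~$3$, and a short argument shows no cycle of $G-B_G(A\cup L,1)$ has an edge to~$C$, so the induced packing number drops by one at a cost of $\abs{A\cup L}\leq 6$; induction finishes the proof. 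This replaces both of your problematic ingredients (the innermost-face analysis and the nesting dichotomy) with the block structure plus the girth bound, and it is where the constant~$6=5+1$ actually comes from.
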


We also consider an analogue of Theorem~\ref{thm:main1} for distance-$d$ packings of cycles.
For a positive integer~$d$, a set~$\mathcal{F}$ of cycles in a graph~$G$ is a \emph{distance-$d$ packing} in~$G$ if~$G$ has no path of length at most~$d$ between any two distinct cycles in~$\mathcal{F}$. 
Note that a collection of cycles in~$G$ is an induced packing if and only if it is a distance-$1$ packing. 

We prove that if a graph has no distance-$d$ packing of two cycles, then it has a small set of bounded radius balls hitting all cycles.  

\begin{restatable}{theorem}{distanced}
    \label{thm:main4}
    Let~$d$ be a positive integer and let~$G$ be a graph.
    Then one can find in polynomial time either a distance-$d$ packing of two cycles or sets~$X_1$ and~$X_2$ of vertices with~${\abs{X_1} \leq 12(d+1)}$ and~${\abs{X_2} \leq 12}$ such that both~${G-B_G(X_1,2d)}$ and~${G-B_G(X_2,3d)}$ are forests. 
\end{restatable}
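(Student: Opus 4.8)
The plan is to fix a shortest cycle $C$ of $G$, observe that every cycle of $G$ must stay close to $C$, and then distinguish whether $C$ is short or long. First I would make the routine reductions: if two distinct components of $G$ contain cycles, those cycles are at distance more than $d$ and we output them as a distance-$d$ packing; if no component contains a cycle, $G$ is a forest and $X_1=X_2=\emptyset$; deleting vertices of degree at most $1$ repeatedly affects neither the cycles nor the relevant distances, so we may pass to the $2$-core, and if that $2$-core is a single cycle we take $X_1=X_2=\{v\}$ for any $v$ (so $G-B_G(\{v\},2d)$ is a subpath). Now fix a shortest cycle $C$, with $g:=\abs{V(C)}$, found in polynomial time. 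The key observation is that every cycle $D$ of $G$ satisfies $\dist_G(V(D),V(C))\le d$, since otherwise $\{C,D\}$ is a distance-$d$ packing of two cycles; in particular $G-B_G(V(C),d)$ is a forest.

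If $g\le 12(d+1)$, I am essentially done. Take $X_1:=V(C)$: then $\abs{X_1}\le 12(d+1)$, and $G-B_G(X_1,2d)$ is an induced subgraph of the forest $G-B_G(V(C),d)$, hence a forest. For $X_2$, choose at most $12$ vertices of $C$ with consecutive chosen vertices at distance at most $\lceil g/12\rceil\le d+1$ along $C$, so that every vertex of $C$ lies within distance at most $d$ along $C$ — and therefore within distance $d$ in $G$ — of some chosen vertex; then $B_G(V(C),d)\subseteq B_G(X_2,2d)$, so $G-B_G(X_2,3d)$ is a forest while $\abs{X_2}\le 12$.

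The real work is the case $g>12(d+1)$, where the cyclic part of $G$ is forced to be very ``thin''. Here I would study the branch structure of the $2$-core: let $W$ be the set of vertices of degree at least $3$, let the \emph{branch paths} be the components of $G-W$, and let $M$ be the multigraph on $W$ obtained by contracting each branch path to a single edge. Since a simple cycle of $G$ meets each branch vertex at most once, it traces a cycle or a loop of $M$ on its own branch vertices, so any feedback vertex set of $M$ is also one of $G$. If $M$ has no two vertex-disjoint cycles, then by Lovász's theorem on such (multi)graphs there is a set of at most three vertices of $M$ meeting all cycles, and we take this set as both $X_1$ and $X_2$, with no ball needed. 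If $M$ does have two vertex-disjoint cycles, they expand to two vertex-disjoint cycles $D_1,D_2$ of $G$, each of length more than $12(d+1)$; if $\dist_G(V(D_1),V(D_2))>d$ we output the packing $\{D_1,D_2\}$, and otherwise I would argue that having two long disjoint cycles joined by a path of length at most $d$ inside a large-girth graph with no distance-$d$ packing forces the ``spine'' along which the cyclic part of $G$ hangs to have total length at most $d$ — because two cycles at opposite ends of a longer spine would lie at distance more than $d$ — whence, taking $v_0$ to be the centre of the spine, every cycle meets $B_G(\{v_0\},d)$ and $X_1=X_2=\{v_0\}$ works.

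I expect the main obstacle to lie entirely in the large-girth case: one must rule out, keeping the constants within $12(d+1)$ and $12$, every way the branch multigraph could ``spread out'' without either exhibiting a distance-$d$ packing of two cycles or collapsing to a bounded structure around which a ball of radius $2d$, respectively $3d$, captures all cycles. The polynomial-time claim is then automatic, since computing the $2$-core, a shortest cycle, the branch multigraph, a size-at-most-three hitting set of a graph with no two vertex-disjoint cycles, and the distance between two extracted cycles are all polynomial-time tasks.
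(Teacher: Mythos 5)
Your reductions and the two easy cases (no cycle at all, and a short shortest cycle $C$, where $X_1=V(C)$ and a few spaced-out vertices of $C$ serve as $X_2$) match the paper's proof, which likewise fixes a shortest cycle $C$, notes that $G-B_G(V(C),d)$ must be a forest, and disposes of the case $\abs{V(C)}<8d+5$ essentially as you do. The entire difficulty is the large-girth case, and there your argument has a genuine gap. First, the dichotomy you set up on the branch multigraph $M$ does not isolate the hard case: a graph with no distance-$d$ packing of two cycles can perfectly well contain many vertex-disjoint cycles (so $M$ has two vertex-disjoint cycles), provided they are pairwise within distance $d$; hence the Lov\'asz branch is the degenerate one, and almost everything falls into the branch you delegate to the ``spine'' heuristic. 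Second, that heuristic is both undefined and too strong: you assert that in the remaining case a single vertex $v_0$ satisfies that every cycle meets $B_G(\{v_0\},d)$. The shortest cycle $C$ may itself be arbitrarily long, and the cycles of $G$ may consist of ears and appendages attached at several separate clusters of vertices along $C$, together with excursions of length up to $d$ into the forest $G-B_G(V(C),d)$; nothing forces all of these through one radius-$d$ ball, and this is precisely why the theorem's bounds are $12(d+1)$ and $12$ rather than $1$. You give no proof of the claim that the ``spine'' has length at most $d$, and the claim that two cycles on a longer spine would be far apart ignores that distinct cycles may overlap or run close together along most of $C$ while still requiring many balls to cover.

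What replaces your sketch in the paper is Lemma 6.3: for $G$ of girth at least $8d+5$ one first shows there is no $C$-path of length at most $4d+2$, then roots the forest $F=G-B_G(V(C),d)$ and recursively extracts subtrees $F_1,\dots,F_t$, each containing a path $P_j$ between two vertices $a_j,b_j$ that re-enter $B_G(C,d+1)$, choosing roots of maximum level and excluding the previously covered region $S(L_{j-1},2d)\cup B_G(W_{j-1},d)$. If $t<4$, the sets $X_1=L_t\cup W_t$ (of size at most $12(d+1)$) and $X_2=W_t\cup\bigcup_{i}\{\supp(a_i),\supp(b_i)\}$ (of size at most $12$) work; if $t\geq4$, the auxiliary subgraph $H=C\cup\bigcup_i(P_i\cup R_{a_i}\cup R_{b_i})$ contains two vertex-disjoint cycles, and a further case analysis shows that any two such cycles are at distance more than $d$. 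An argument of comparable substance is needed for your final case; as written, the proposal does not prove the theorem.
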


Theorem~\ref{thm:main4} has the following corollary on graphs of bounded maximum degree.

\begin{restatable}{corollary}{tinboundd}\label{cor:K1tfreed}
    Let~$d$ and~$t$ be positive integers and let~$G$ be a graph of maximum degree at most~$t$.
    Then one can find in polynomial time either a distance-$d$ packing of two cycles or a set~$X$ of at most~$\mathcal{O}(d\cdot t^{2d})$ vertices such that~${G-X}$ is a forest. 
\end{restatable}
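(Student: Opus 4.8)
The plan is to derive this as an immediate consequence of \Cref{thm:main4}, using the elementary fact that balls of bounded radius are small in graphs of bounded maximum degree. First I would dispose of the degenerate cases~${t \leq 1}$: then~$G$ has no two incident edges, so~$G$ is already a forest and~${X = \emptyset}$ works. Hence we may assume~${t \geq 2}$.

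Next, run the polynomial-time algorithm of \Cref{thm:main4} on~$G$. If it outputs a distance-$d$ packing of two cycles, we are done. Otherwise it outputs sets~${X_1, X_2 \subseteq V(G)}$ with~${\abs{X_1} \leq 12(d+1)}$ such that~${G - B_G(X_1, 2d)}$ is a forest. Set~${X := B_G(X_1, 2d)}$; this can be computed by breadth-first search from~$X_1$ in polynomial time, and by construction~${G - X}$ is a forest. It only remains to bound~$\abs{X}$. In a graph of maximum degree at most~$t$, the ball of radius~$r$ around a single vertex has at most~${1 + t + t^2 + \dots + t^r = \tfrac{t^{r+1}-1}{t-1} \leq 2 t^r}$ vertices, where the last inequality uses~${t \geq 2}$. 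Therefore
\[
    \abs{X} = \abs{B_G(X_1, 2d)} \leq \abs{X_1} \cdot 2 t^{2d} \leq 24(d+1)\, t^{2d} = \mathcal{O}(d \cdot t^{2d}),
\]
which is the desired bound.

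There is essentially no hard step here: all of the difficulty is already contained in \Cref{thm:main4}, and the remaining work is the routine ball-counting estimate together with the observation that breadth-first search runs in polynomial time. The only point worth flagging is the choice to expand~$X_1$ and its $2d$-ball rather than~$X_2$ and its $3d$-ball: the former gives the stated bound~${\mathcal{O}(d\cdot t^{2d})}$, whereas the latter would only yield the weaker~${\abs{B_G(X_2,3d)} \leq 12 \cdot 2 t^{3d} = \mathcal{O}(t^{3d})}$.
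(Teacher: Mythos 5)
Your proposal is correct and follows essentially the same route as the paper: apply Theorem~\ref{thm:main4}, take the set~$X_1$ with~$\abs{X_1}\leq 12(d+1)$, and bound~$\abs{B_G(X_1,2d)}$ by a routine ball-counting estimate in a graph of maximum degree~$t$ (the paper uses the marginally sharper bound with branching factor~$t-1$, but this makes no asymptotic difference). Your observation about preferring the pair~$(X_1,2d)$ over~$(X_2,3d)$ matches the paper's choice as well.
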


Compared to Corollary~\ref{cor:K1tfree}, we show that the class of $K_{1,3}$-free graphs without a distance\nobreakdash-$2$ packing of two cycles has unbounded tree-independence number. This is certified by the line graphs of complete bipartite graphs, where Dallard et al.~\cite{DallardKKMMW2024} proved that they have large tree-independence number. 

\begin{restatable}{theorem}{unboundedtin}\label{thm:unboundedtin}
    The class of $K_{1,3}$-free graphs without a distance-$2$ packing of two cycles has unbounded tree-independence number. 
\end{restatable}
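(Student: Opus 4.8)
The plan is to exhibit an explicit infinite family of graphs in the class whose tree-independence number is unbounded, namely the line graphs $L(K_{n,n})$ of the complete bipartite graphs. It is convenient to view $L(K_{n,n})$ as the $n\times n$ rook's graph $R_n$: its vertex set is $[n]\times[n]$, and $(i,j)$ is adjacent to $(i',j')$ precisely when $i=i'$ or $j=j'$. By a theorem of Dallard et al.~\cite{DallardKKMMW2024}, the tree-independence number of $L(K_{n,n})$ tends to infinity with $n$, so it suffices to verify that each $L(K_{n,n})$ is $K_{1,3}$-free and has no distance-$2$ packing of two cycles.

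For claw-freeness I would invoke the classical fact that every line graph is $K_{1,3}$-free; alternatively one argues directly in $R_n$, since the neighbourhood of $(i,j)$ is covered by the two cliques $\{i\}\times[n]$ and $[n]\times\{j\}$, so any three neighbours of $(i,j)$ contain two that lie in a common clique and are hence adjacent.

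The heart of the argument is to show that any two vertex-disjoint cycles $C_1,C_2$ of $R_n$ are joined by a path of length at most~$2$, so that $\{C_1,C_2\}$ can never be a distance-$2$ packing. I would split into two cases. If some vertex of $C_1$ and some vertex of $C_2$ lie in a common row or a common column, then, being distinct, they are adjacent, giving a path of length~$1$. Otherwise every vertex of $C_1$ disagrees with every vertex of $C_2$ in both coordinates; picking $u=(i,j)\in C_1$ and $w=(i',j')\in C_2$ and setting $x=(i,j')$, one checks that $x$ is adjacent to $u$ (same row $i$) and to $w$ (same column $j'$), and that $x$ lies on neither cycle: if $x\in C_1$ then $x$ and $w$ share the column $j'$, and if $x\in C_2$ then $x$ and $u$ share the row $i$, both contradicting the case hypothesis. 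Hence the path on $u,x,w$ has length~$2$ and is internally disjoint from $C_1\cup C_2$. In either case $C_1$ and $C_2$ are at distance at most~$2$, as required. Since every $L(K_{n,n})$ lies in the class and their tree-independence numbers are unbounded, the theorem follows.

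The only ingredient that is not elementary is the lower bound on the tree-independence number of $L(K_{n,n})$, which I am importing wholesale from~\cite{DallardKKMMW2024} rather than reproving; the rest is a short structural check, the only real point of care being the verification in the second case of the distance argument that the connecting vertex $x$ avoids both cycles.
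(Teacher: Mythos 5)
Your proposal is correct and follows essentially the same route as the paper: the same family $L(K_{n,n})$, the same appeal to Dallard et al.\ for the tree-independence lower bound, claw-freeness of line graphs, and the observation that $L(K_{n,n})$ has diameter at most~$2$ so no two cycles can form a distance-$2$ packing. Your extra check that the connecting vertex $x$ avoids both cycles is harmless but not needed under the paper's definition of a path between two cycles.
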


We conjecture that the following common generalisation of~Theorems~\ref{thm:main1} and~\ref{thm:main4} holds. 

\begin{conjecture}
    There exist functions~${f(k) = \mathcal{O}(k\log k)}$ and~${g(d) = \mathcal{O}(d)}$ such that for all positive integers~$k$ and~$d$, every graph~$G$ contains either a distance-$d$ packing of~$k$ cycles or a set~$X$ of at most $f(k)$ vertices such that~${G-B_G(X,g(d))}$ is a forest.
\end{conjecture}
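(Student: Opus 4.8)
The plan is to find a bounded-size set of vertices whose $O(d)$-ball is a feedback vertex set of~$G$, and then tune that set at two scales to obtain the two stated bounds. Two reductions are immediate: we may assume $G$ is connected (two cycles in distinct components form a distance-$d$ packing of two cycles, which we output), and we may assume $G$ has a cycle (else $X_1=X_2=\emptyset$ works). The only information the hypothesis hands us for free is that, for \emph{any} fixed cycle~$C^\ast$ of~$G$, every cycle of~$G$ lies within distance~$d$ of~$C^\ast$, since otherwise it and~$C^\ast$ form a distance-$d$ packing of two cycles. If~$C^\ast$ were short this would already give what we want (take a $d$-net of~$C^\ast$); the whole difficulty is that cycles of~$G$ may be arbitrarily long, so a net of~$C^\ast$ need not be bounded.

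If~$G$ has no two vertex-disjoint cycles, then by the classical theorem of Dirac and Lov\'asz on such graphs,~$G$ has a feedback vertex set~$F$ with~$\abs{F}\le 3$; then $X_1=X_2=F$ makes $G-X_i$, hence $G-B_G(X_i,\rho)$ for every~$\rho$, a forest. So assume~$G$ has two vertex-disjoint cycles~$C_1,C_2$. Since~$\{C_1,C_2\}$ is not a distance-$d$ packing there is a $C_1$--$C_2$ path~$Q$ of length at most~$d$, and $K_0:=C_1\cup Q\cup C_2$ is a connected ``handcuff'' subgraph with two branch vertices; by the free fact above every cycle of~$G$ is within distance~$d$ of~$K_0$, so it remains to compress the (possibly long) subgraph $K_0$ to a bounded set of vertices.

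The heart of the proof is showing that, because any two cycles of~$G$ are at distance at most~$d$, the cyclic structure of~$G$ is concentrated: there is a set~$S$ of at most~$12$ vertices such that $G-B_G(S,O(d))$ is a forest. The vertices of~$S$ arise in two ways. Some are ``centres'': one argues that the part of~$G$ carrying many short cycles cannot spread out, since a dense cyclic region far from another would yield two cycles at distance more than~$d$, and likewise many short cycles strung along a long cycle would do so; hence this part lies in a bounded number of balls of radius~$O(d)$, whose centres we put into~$S$. The others are ``branch vertices'' of long but structurally simple pieces: a long cycle of~$G$ through which no short path shortcuts must essentially lie inside a subdivision of a small multigraph (a subdivided theta or handcuff graph and its variants), and deleting a branch vertex of such a subdivision already destroys every cycle through it, so a bounded number of such branch vertices handles the rest. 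Proving that in all cases~$12$ vertices are enough requires a careful analysis of how the constituent cycles and their connecting paths and chords fit together, and this is the step I expect to be the main obstacle.

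Given such an~$S$, the bound for~$X_2$ is essentially immediate: enlarging the radius slightly to~$3d$ we may take $X_2=S$, so $\abs{X_2}\le 12$ and $G-B_G(X_2,3d)$ is a forest. For~$X_1$ we are only allowed radius~$2d$, so around each ``centre'' vertex of~$S$ we cannot afford the extra slack; instead we replace each centre by at most~$d+1$ vertices forming a $d$-net of the corresponding bounded-diameter region (for instance, all vertices of a short connecting path such as~$Q$), which lowers the needed radius to~$2d$, while the ``branch vertex'' part of~$S$ works already at radius~$0$. This gives $\abs{X_1}\le 12(d+1)$ with $G-B_G(X_1,2d)$ a forest. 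All steps are constructive and polynomial-time: a second disjoint cycle or the feedback vertex set of the first case is found greedily, the path~$Q$ and all distances come from breadth-first search, extracting~$S$ and the required nets is a bounded search, and if at any point a cycle is found at distance more than~$d$ from an earlier one, we instead output that distance-$d$ packing of two cycles.
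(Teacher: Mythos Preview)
The statement is a \emph{conjecture}; the paper does not prove it. It is listed explicitly as an open problem that would simultaneously generalise Theorem~\ref{thm:main1} (the case $d=1$) and Theorem~\ref{thm:main4} (the case $k=2$), so there is no proof in the paper against which to compare.

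Your proposal does not address the conjecture either. The conjecture asks, for arbitrary $k$, for a distance-$d$ packing of $k$ cycles or a hitting set of size $f(k)=\mathcal{O}(k\log k)$; your argument treats only $k=2$ throughout (you repeatedly output ``a distance-$d$ packing of two cycles'', and your set sizes $12$ and $12(d+1)$ are independent of $k$). The ``two stated bounds'' and the sets $X_1,X_2$ you refer to do not appear in the conjecture at all --- they are the conclusion of Theorem~\ref{thm:main4}. You have misidentified the target statement. Even read as a sketch of Theorem~\ref{thm:main4}, the step you yourself flag as ``the main obstacle'' --- producing a $12$-vertex set $S$ with $G-B_G(S,\mathcal{O}(d))$ a forest --- is left entirely unargued; the paper obtains it not from an arbitrary handcuff $C_1\cup Q\cup C_2$ but from a \emph{shortest} cycle $C$ (so the girth is at least $8d+5$ or a short cycle already suffices), uses that $G-B_G(C,d)$ is then a forest, and runs a greedy procedure in that forest whose failure after at most three rounds pinpoints the at most $12$ vertices needed. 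Your informal dichotomy of ``centres'' versus ``branch vertices'' has no counterpart to this structure and does not yield a bound on $\abs{S}$.
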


We remark that Dujmovi\'{c} et al.~\cite{DujmovicJMM} recently showed that the conjecture holds for functions $f(k)=\mathcal{O}(k^{18}\log^{18}k)$ and $g(d)=19d$.

We now sketch the proofs of Theorems~\ref{thm:main1} and~\ref{thm:main4}.
The main idea for both proofs is to recursively find special ear-decompositions in $G$ where the attachment vertices of each ear are sufficiently far apart from the attachment vertices of all the previous ears.
The precise priority rules we use to construct these ear-decompositions are slightly different in the proofs of Theorems~\ref{thm:main1} and~\ref{thm:main4}.
However, in both cases, either we find the required packing of cycles inside the ear-decomposition, or we find a small set of bounded radius balls hitting all cycles (possibly with length constraint) in~$G$.

Our paper is organized as follows. 
In Section~\ref{sec:prelim}, we introduce preliminary concepts. 
In Section~\ref{sec:coarse}, we introduce our main tool called $\ell$-coarse ear-decompositions, and derive their properties. 
With these properties, in Section~\ref{sec:maintheorem}, we prove Theorem~\ref{thm:main1} as well as Theorem~\ref{thm:main2}. 
In Section~\ref{sec:planar}, we prove Theorem~\ref{thm:main3} and in Section~\ref{sec:distancepacking}, we prove Theorem~\ref{thm:main4}. 
We prove Corollaries~\ref{cor:K1tfree} and~\ref{cor:K1tfreed} and Theorem~\ref{thm:unboundedtin} in Section~\ref{sec:tin}. 
We discuss some open problems in Section~\ref{sec:conclusion}.

\section{Preliminaries}
\label{sec:prelim}

For an integer~$i$, we denote by~$[i]$ the set of positive integers at most~$i$. 
Note that if~${i \leq 0}$, then~$[i]$ is empty. 
Later, we consider the lexicographic order of pairs of non-negative integers.
For pairs~$(i,j)$ and~$(i',j')$ of non-negative integers, we write $(i,j)<_L(i',j')$ if~$(i,j)$ is lexicographically smaller than~$(i',j')$, and $(i,j)\leq_L(i',j')$ if $(i,j)<_L(i',j')$ or $(i,j)=(i',j')$.
The relations~$>_L$ and~$\geq_L$ are defined similarly.

Let~$G$ be a graph. 
We denote by~${V(G)}$ and~${E(G)}$ the vertex set and the edge set of~$G$, respectively. 
For a set~$A$ of vertices in~$G$, we denote by~${G-A}$ the graph obtained from~$G$ by deleting all the vertices in~$A$ and all edges incident with vertices in~$A$. 
In addition, we denote by~${G[A]}$ the subgraph of~$G$ induced by~$A$. 
If~${A = \{v\}}$, then we write~${G-v}$ for~${G-A}$. 
For a set~$F$ of edges in~$G$, we denote by~${G-F}$ the graph obtained by deleting all edges in~$F$. 
For another graph~$G'$, let~${G \cup G' \coloneqq (V(G)\cup V(G'),E(G)\cup E(G'))}$ and~${G \cap G' \coloneqq (V(G)\cap V(G'),E(G)\cap E(G'))}$.

For a path~$P$ of~$G$ and vertices~${a,b\in V(P)}$, we denote by~$aPb$ the subpath of~$P$ between~$a$ and~$b$. 
For sets~${X,Y \subseteq V(G)}$, an \emph{$(X,Y)$-path} is a path in~$G$ such that one of its ends is in~$X$, the other end is in~$Y$, and no internal vertex is in~$X\cup Y$.
If~${X = \{x\}}$ or~${Y = \{y\}}$, then we may call it an ${(x,Y)}$-path or an ${(X,y)}$-path, respectively.
For subgraphs~$H$ and~$H'$ of~$G$, an \emph{$(H,H')$-path} is a $(V(H),V(H'))$-path in~$G$ of length at least~$1$ whose edge set is disjoint from $E(H\cup H')$.
When $H=H'$, we call it an \emph{$H$-path}.
A \emph{chord} of~$H$ in~$G$ is an edge of~$G$ which forms an $H$-path of length~$1$.

An \emph{independent set} in~$G$ is a set of pairwise non-adjacent vertices.
The \emph{independence number} of~$G$, denoted by~${\alpha(G)}$, is the maximum size of an independent set in~$G$. 
The \emph{girth} of~$G$ is the length of a shortest cycle of~$G$. 

For vertices~$u$ and~$v$ of~$G$, the \emph{distance between~$u$ and~$v$ in~$G$}, denoted by~$\dist_G(u,v)$, is the length of a shortest path of~$G$ between~$u$ and~$v$; 
if no such path exists, then we set~${\dist_G(u,v) \coloneqq \infty}$. 
For a set~${X \subseteq V(G)}$, let~${\dist_G(u,X) \coloneqq \min_{v\in X}\dist_G(u,v)}$.
For a non-negative integer ${d}$, the \emph{ball of radius~$d$ around a vertex~$v$} in~$G$ is
\[
    B_G(v,d) \coloneqq \{ u \in V(G) \colon \dist_G(u,v) \leq d\}. 
\]
For a set~${S\subseteq V(G)}$, we write~${B_G(S,d)}$ for~${\bigcup_{v\in S}B_G(v,d)}$, 
and for a subgraph~$H$ of~$G$, we write~${B_G(H,d)}$ for~${B_G(V(H),d)}$. 

A graph is \emph{cubic} if every vertex has degree~$3$. 
The following lemma shows that every $n$-vertex cubic graph has no feedback vertex set of size at most~${\lfloor(n+1)/7\rfloor}$. 

\begin{lemma}\label{lem:cubic}
    Let~$k$ be a positive integer and let~$G$ be a cubic graph.
    If~${\abs{V(G)} \geq 7k-1}$, then~$G$ has no set~$X$ of at most~$k$ vertices such that~${G-X}$ is a forest. 
\end{lemma}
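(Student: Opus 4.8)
The plan is a short double-counting argument, run by contradiction. Suppose $G$ is cubic with $n \coloneqq \abs{V(G)} \geq 7k-1$ and that some $X \subseteq V(G)$ with $\abs{X} \leq k$ has the property that $G-X$ is a forest; I aim to conclude $n < 7k-1$.

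Since $k \geq 1$ we have $\abs{X} \leq k < 7k-1 \leq n$, so $V(G) \setminus X \neq \emptyset$, and $G-X$ is a forest on $n - \abs{X} \geq 1$ vertices, hence has at most $n - \abs{X} - 1$ edges. On the other hand, summing degrees over $X$, the number of edges of $G$ incident with $X$ is at most $\sum_{v \in X} \deg_G(v) = 3\abs{X}$ --- here edges with both ends in $X$ are counted twice in the degree sum and edges with exactly one end in $X$ once, so $3\abs{X}$ is a valid upper bound on their number. Every edge of $G$ is either incident with $X$ or lies in $G - X$, so, using $\abs{E(G)} = \tfrac{3n}{2}$,
\[
    \frac{3n}{2} \;=\; \abs{E(G)} \;\leq\; 3\abs{X} + \bigl(n - \abs{X} - 1\bigr) \;=\; n + 2\abs{X} - 1 .
\]
Rearranging gives $n \leq 4\abs{X} - 2 \leq 4k - 2$, and since $k \geq 1$ this is strictly less than $7k - 1$, the desired contradiction.

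I do not anticipate any genuine obstacle: the point is merely that a cubic graph is ``edge-dense'' (it has $\tfrac32$ times as many edges as vertices), while deleting $k$ vertices destroys at most $3k$ edges and what remains, being a forest, is sparse. The only steps needing a moment's care are (i) checking $V(G)\setminus X \neq \emptyset$, so that the bound $n - \abs{X} - 1$ on the number of edges of $G-X$ is legitimate (this uses nothing more than $\abs{X}\le k < n$), and (ii) the multiplicity bookkeeping in the degree sum over $X$. It is worth noting that this argument in fact proves the sharper statement that every cubic graph admitting a feedback vertex set of size at most $k$ has at most $4k-2$ vertices, which is more than enough for the stated bound $7k-1$.
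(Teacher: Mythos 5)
Your proof is correct. Both your argument and the paper's are elementary counting arguments exploiting that a cubic graph has $\tfrac{3}{2}\abs{V(G)}$ edges while a forest is sparse, but the bookkeeping differs: the paper classifies the vertices of $G-X$ by their degree in $G-X$ (at most $3k$ of them can have lost a neighbour, so at least $3k-1$ still have degree~$3$, which the handshaking bound for forests forbids), whereas you count the edges of $G$ globally, splitting them into those incident with $X$ (at most $3\abs{X}$) and those surviving in the forest $G-X$ (at most $n-\abs{X}-1$). Your version is slightly shorter and, more importantly, yields the sharper conclusion $n \leq 4\abs{X}-2$, i.e.\ the lemma would remain true with $7k-1$ replaced by $4k-1$; this is tight, as witnessed by $K_{3,3}$ with $k=2$. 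Your two points of care are both handled correctly: $\abs{X} \leq k < 7k-1 \leq n$ guarantees $G-X$ is a nonempty forest, and the degree sum over $X$ overcounts (rather than undercounts) the edges meeting $X$, which is the right direction for the inequality. If one cared, propagating the constant $4$ in place of $7$ through the proof of Theorem~\ref{thm:main2} would improve the coefficient of $h(k)$ there.
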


\begin{proof}
    Towards a contradiction, suppose that~$G$ has a set~$X$ of at most~$k$ vertices such that~${G-X}$ is a forest.
    For each~${i \in [3] \cup \{0\}}$, let~$n_i$ be the number of degree-$i$ vertices of~${G-X}$. 
    Since~$G$ is cubic and~${\abs{X} \leq k}$, we have that~${n_0 + n_1 + n_2 \leq 3k}$. 
    This implies that ${n_3 \geq 3k-1}$ as ${\abs{V(G-X)} \geq 6k-1}$.
    Since~${G-X}$ is a forest, by the Handshaking Lemma, 
    \[
        n_1+2n_2+3n_3\leq2(n_1+n_2+n_3-1),
    \]
    and therefore~${n_3 \leq n_1-2 \leq 3k-2}$, a contradiction. 
\end{proof}

We will use the following result of Simonovits~\cite{Simonovits1967}.
We define~$s_k$ for~${k \in \mathbb{N}}$ as 
\begin{align*}
    s_k\coloneqq
    \begin{cases}
        4k(\log k+\log\log k+4)\quad &\text{if } k \geq 2\\
        2 &\text{if } k = 1.
    \end{cases}
\end{align*}

\begin{theorem}[Simonovits~\cite{Simonovits1967}]
    \label{thm:simonovitz}
    Let~$k$ be a positive integer and let~$G$ be a cubic graph.
    If~${\abs{V(G)} \geq s_k}$, then~$G$ contains~$k$ vertex-disjoint cycles. 
    Moreover, these~$k$ cycles can be found in polynomial time.
\end{theorem}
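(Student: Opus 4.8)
The plan is to prove by induction on~$k$ the following slightly more general statement: every loopless multigraph~$G$ of minimum degree at least~$3$ with~${|V(G)| \geq s_k}$ contains~$k$ pairwise vertex-disjoint cycles, which can moreover be found in polynomial time; applied to a cubic simple graph this gives the theorem. The first step is to reduce to the case that~$G$ is \emph{simple} with minimum degree at least~$3$: deleting a vertex of degree at most~$1$ affects neither the cycles of~$G$ nor the vertex-count hypothesis, and \emph{suppressing} a vertex~$v$ of degree~$2$ with distinct non-adjacent neighbours~$a,b$ (delete~$v$, add the edge~$ab$) preserves the maximum number of vertex-disjoint cycles, since in any vertex-disjoint family at most one cycle passes through~$v$ and such a cycle must traverse the path~$avb$, which can be replaced by~$ab$ (and conversely). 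If a loop, a pair of parallel edges, or a degree-$2$ vertex with adjacent neighbours appears, there is a cycle on at most~$3$ vertices; we extract it, delete its vertices, and recurse with parameter~$k-1$, which is fine because~${s_k - 3 \geq s_{k-1}}$.

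In the main case~$G$ is simple with minimum degree at least~$3$, so by \Cref{thm:girth} a shortest cycle~$C$ satisfies~${|V(C)| = g < 2\log_2 |V(G)|}$. We delete~$V(C)$, reduce the result again to a simple minimum-degree-$\geq 3$ (multi)graph~$G'$ as above, and recurse with parameter~$k-1$ on~$G'$; together with~$C$ this produces~$k$ vertex-disjoint cycles of~$G$. It remains to bound~$|V(G')|$ from below, and this is clean when~${g \geq 7}$: minimality of~$C$ forces that no vertex outside~$C$ has two neighbours on~$C$ and that no two vertices outside~$C$ adjacent to~$C$ are themselves adjacent (either would give a cycle shorter than~$g$ via a short arc of~$C$), so~${G - V(C)}$ already has minimum degree at least~$2$ and its degree-$2$ vertices form an independent set of size exactly~$g$; reducing loses only these, giving~${|V(G')| \geq |V(G)| - 2g \geq |V(G)| - 4\log_2 |V(G)|}$. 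As~${x \mapsto x - 4\log_2 x}$ is increasing on the relevant range, the induction closes once the thresholds satisfy~${s_k - 4\log_2 s_k \geq s_{k-1}}$, which—after optimising lower-order terms and handling small~$k$ directly—is exactly what the displayed formula for~$s_k$ is engineered to give; the base case~${k=1}$ holds because a graph of minimum degree at least~$3$ contains a cycle. The remaining case~${g \leq 6}$ is treated separately: either~$G'$ is again large enough, or the at most~$6$ vertices of~$C$ together with the~$2$-core of~${G - V(C)}$ form a small feedback vertex set of~$G$, which by \Cref{lem:cubic} bounds~$|V(G)|$ and contradicts~${|V(G)| \geq s_k}$ for~${k \geq 2}$.

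I expect the main obstacle to be the quantitative bookkeeping in the reduction step when short cycles (girth~$\leq 6$) occur: deleting~$V(C)$ can create vertices of degree less than~$2$, whose removal may cascade and a priori destroy many more than~$\Theta(\log |V(G)|)$ vertices; one must show that a large cascade forces the deleted set to be almost a feedback vertex set and then invoke \Cref{lem:cubic} to conclude that~$G$ was small to begin with. A convenient sanity check on the shape of the bound is the handshake identity for a cubic graph~$G$ with a feedback vertex set~$F$: counting componentwise the edges between~$F$ and the forest~${G - F}$ gives~${|V(G)| = 4|F| - 2c - 2|E(G[F])| \leq 4|F|}$, where~$c$ is the number of components of~${G - F}$, so the vertex bound~${s_k = \Theta(k\log k)}$ is equivalent to a~$\Theta(k\log k)$ bound on the feedback vertex number of cubic graphs with no~$k$ disjoint cycles—precisely what the induction above produces. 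For the algorithmic claim, a shortest cycle is found by breadth-first search, every reduction is polynomial-time, and the recursion has depth~$k$, so the~$k$ cycles are produced in polynomial time.
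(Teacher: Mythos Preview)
The paper does not supply a proof of \Cref{thm:simonovitz}: it is quoted in the preliminaries as a result of Simonovits and invoked as a black box in the proof of \Cref{thm:main1}, so there is nothing in the paper to compare against. Your outline is essentially Simonovits' original argument --- take a shortest cycle~$C$, short by \Cref{thm:girth}, delete it, reduce back to a cubic multigraph, and recurse on~${k-1}$ --- and is correct in its main lines.

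There is one genuine gap, in precisely the place you flagged as the obstacle. Your handling of the case~${g \leq 6}$ via \Cref{lem:cubic} does not work: the phrase ``$V(C)$ together with the $2$-core of~${G-V(C)}$ form a small feedback vertex set'' has the roles reversed (the $2$-core is what survives, not what is deleted), and even read charitably the argument only produces a feedback vertex set of size about ${\abs{V(C)} + \abs{V(G')}}$, so that \Cref{lem:cubic} gives merely ${\abs{V(G)} \lesssim 7 s_{k-1}}$, which does not contradict ${\abs{V(G)} \geq s_k}$. The fix is uniform in~$g$ and removes the case split altogether: in a cubic graph exactly~$g$ edges leave~$C$, so the total deficiency ${\sum_{v \notin V(C)} \bigl(3 - \deg_{G-V(C)}(v)\bigr)}$ equals~$g$, and since each deletion of a vertex of degree at most~$1$ and each suppression of a degree-$2$ vertex lowers this sum by at least one, the whole reduction touches at most~$g$ further vertices. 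Hence ${\abs{V(G')} \geq \abs{V(G)} - 2g}$ for every~$g$, and the recursion ${s_k - 4\log_2 s_k \geq s_{k-1}}$ follows directly. (Relatedly, you generalise to minimum degree at least~$3$ but then invoke the cubic-specific count ``exactly~$g$'' degree-$2$ vertices; it is cleaner to stay with cubic multigraphs throughout, since suppression preserves $3$-regularity.)
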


\Cref{thm:simonovitz} implies that every cubic graph on at least~$40$ vertices has two vertex-disjoint cycles. 
We can improve this as follows.

\begin{lemma}\label{lem:twocycles}
    Let~$G$ be a cubic graph on at least eight vertices. 
    Then~$G$ has two vertex-disjoint cycles. 
\end{lemma}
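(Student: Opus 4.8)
The plan is to take a shortest cycle~$C$ of~$G$ and show that, unless~$G$ is small, the graph~${G - V(C)}$ already contains a cycle; the few remaining small cases are then handled by the Moore bound.

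First I would dispose of the disconnected case: two distinct components each contain a cycle, since~$G$ has minimum degree~$3$. So assume~$G$ is connected, and write~${n \coloneqq \abs{V(G)}}$; as~$n$ is even, ${n \geq 8}$. Let~$C$ be a shortest cycle of~$G$, of length~$g$ equal to the girth. Being shortest,~$C$ is chordless, so~${G[V(C)] = C}$ and every vertex of~$C$ has exactly one neighbour outside~$C$; in particular there are exactly~$g$ edges between~$V(C)$ and~${V(G) \setminus V(C)}$, and the latter set is nonempty. Now suppose towards a contradiction that~$G$ has no two vertex-disjoint cycles. Then~${F \coloneqq G - V(C)}$ is a forest; let~${c \geq 1}$ be its number of components, so~${\abs{E(F)} = (n - g) - c}$. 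Counting the edges between~$C$ and~$F$ from the~$F$-side gives~${3(n-g) - 2\abs{E(F)} = (n - g) + 2c}$, which must equal~$g$; hence~${n = 2g - 2c \leq 2g - 2}$, that is, ${g \geq n/2 + 1}$.

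On the other hand~$G$ is cubic, so Theorem~\ref{thm:girth} gives~${g < 2\log_2 n}$. Combining the two bounds, ${n/2 + 1 < 2\log_2 n}$, and this inequality fails for every even~${n \geq 14}$; hence~${n \in \{8, 10, 12\}}$, and then~${g \geq n/2+1}$ together with~${g < 2\log_2 n}$ forces~${g = n/2 + 1 \in \{5, 6, 7\}}$. To finish, I would invoke the elementary Moore bound: a cubic graph of girth at least~$5$ (respectively~$6$,~$7$) has at least~$10$ (respectively~$14$,~$22$) vertices, since a breadth-first ball of the appropriate radius around a vertex (for odd girth) or around the two ends of an edge (for even girth) contains no cycle and so is a tree whose vertices one simply counts. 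This contradicts~${n \in \{8, 10, 12\}}$. Therefore~$F$ contains a cycle, which together with~$C$ forms two vertex-disjoint cycles.

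I expect the last step to be the only delicate point: Theorem~\ref{thm:girth} on its own does not quite exclude~${n \in \{8, 10, 12\}}$, so one needs the slightly sharper neighbourhood count there. (Alternatively, one could appeal to the classical structure theorem describing the graphs with no two vertex-disjoint cycles and observe that none of them is cubic on at least eight vertices, but the Moore-bound argument is self-contained.) Everything before it is a single double-count together with one application of Theorem~\ref{thm:girth}.
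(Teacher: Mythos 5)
Your proof is correct, but it takes a different route from the paper's. The paper also starts from a shortest cycle~$C$ and shows directly that~${G-V(C)}$ contains a cycle, via a short case split: if~${\abs{V(C)} \leq 4}$ an average-degree count does the job, and if~${\abs{V(C)} \geq 5}$ the key observation is that every vertex outside~$C$ has at most one neighbour on~$C$ (because~$C$ is shortest), so~${G-V(C)}$ has minimum degree~$2$; Theorem~\ref{thm:girth} is used only to check that~${G-V(C)}$ is nonempty. You instead argue by contradiction: assuming~${G-V(C)}$ is a forest, your double count of the~$g$ edges leaving~$C$ yields~${n \leq 2g-2}$, and you then need both Theorem~\ref{thm:girth} and the cubic Moore bounds for girths~$5$,~$6$,~$7$ to rule out~${n \in \{8,10,12\}}$. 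All the steps check out (the chordlessness of~$C$ gives exactly~$g$ edges leaving it, the forest count~${\abs{E(F)} = (n-g)-c}$ is right, and the Moore bounds~$10$,~$14$,~$22$ are standard), so the argument is sound; it is just somewhat heavier at the finish, importing the Moore bound as an extra ingredient where the paper's "at most one neighbour on a shortest long cycle" observation disposes of everything at once. Your identity~${n = 2g - 2c}$ is a nice by-product that the paper's proof does not give, though nothing here uses it beyond the inequality.
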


\begin{proof}
    Let~${n \coloneqq \abs{V(G)}}$, let~$C$ be a shortest cycle of~$G$, and let~${G' \coloneqq G-V(C)}$. 
    It suffices to show that~$G'$ has average degree at least~$2$, and hence contains a cycle.
    Since~$G$ is cubic and~$C$ is an induced cycle, we deduce that~$G'$ has at least one vertex.

    Towards a contradiction, suppose that the average degree of~$G'$ is less than~$2$.
    Then
    \[
        \abs{V(G')}=n-\abs{V(C)}>\abs{E(G')}\geq3\abs{V(G')}-\abs{V(C)}=3n-4\abs{V(C)},
    \]
    and therefore $\abs{V(C)}\geq2n/3>5$.
    Since~$C$ is a shortest cycle and has length at least~$6$, every vertex in $V(G)\setminus V(C)$ has at most one neighbour in~$V(C)$.
    Hence,~$G'$ has minimum degree at least~$2$, a contradiction.
\end{proof} 

We remark Lemma~\ref{lem:twocycles} is tight since there is no cubic graph on seven vertices, and the complete bipartite graph~$K_{3,3}$ is cubic but does not have two vertex-disjoint cycles.

For a graph~$G$, a \emph{tree-decomposition} of~$G$ is a pair~${\mathcal{T} = (T,\beta)}$ of a tree~$T$ and a function~${\beta \colon V(T)\to 2^{V(G)}}$ whose images are called the \emph{bags} of~$\mathcal{T}$ such that 
\begin{enumerate}
    \item ${\bigcup_{u \in V(T)} \beta(u) = V(G)}$,
    \item for every~${uv \in E(G)}$ there exists some~${t \in V(T)}$ with~${\{u,v\} \subseteq \beta(t)}$, and 
    \item for every vertex~${v \in V(G)}$ the set~${\{ u \in V(T) \colon v \in \beta(u) \}}$ induces a subtree of~$T$. 
\end{enumerate}
The \emph{width} of~${(T,\beta)}$ is~${\max\{ \abs{\beta(u)}-1 \colon u \in V(T) \}}$ and the \emph{tree-width} of~$G$ is the minimum width among all possible tree-decompositions of~$G$. 
It is well known that every forest has tree-width at most~$1$. 

Birmele~\cite{Birmele03} showed that graphs without long cycles have small tree-width.

\begin{lemma}[Birmele~\cite{Birmele03}]\label{lem:birmele}
    Let~$G$ be a graph and~$\ell$ be an integer at least~$3$.
    If~$G$ has no cycle of length at least~$\ell$, then one can construct a tree-decomposition of~$G$ of width at most~$\ell-2$ in linear time.
\end{lemma}

The \emph{independence number} of a tree-decomposition~$\mathcal{T}$, denoted by~$\alpha(\mathcal{T})$, is defined as
\[
    \alpha(\mathcal{T}) \coloneqq \max_{t\in V(T)} \alpha(G[\beta(t)]). 
\]
The \emph{tree-independence number} of a graph~$G$, denoted by~$\tin(G)$, is the minimum independence number among all possible tree-decompositions of~$G$. 

We will use the following algorithm to find a shortest path or cycle among all path or cycle of length at least certain number.

\begin{lemma}\label{lem:shortest}
    Let~$d$ be a positive integer, let~$G$ be a graph, and let~$s$ and~$t$ be distinct vertices of~$G$.
    Among the paths of length at least~$d$ between~$s$ and~$t$ and the cycles of length at least~$d$ passing through~$s$, one can find shortest ones in~$\mathcal{O}(n^{d-1}(n+m)+nm)$ time.
\end{lemma}
\begin{proof}
    If $d=1$, then the statement holds by finding a shortest path between~$s$ and~$t$ in~$\mathcal{O}(n+m)$ time and a shortest cycle passing through~$s$ in~$\mathcal{O}(nm)$ time.
    Thus, we may assume that~${d\geq2}$.

    We first find a shortest path of length at least~$d$ between~$s$ and~$t$.
    We may assume that~${d\geq2}$, because otherwise it can be done in~$\mathcal{O}(n+m)$ time by finding a shortest path between~$s$ and~$t$.
    We enumerate all paths $sv_1\cdots v_{d-1}$ in~$G-t$ of length~${d-1}$.
    This can be done in $\mathcal{O}(n^{d-1}d)$ time as there are at most $n^{d-1}$ choices of the vertices $v_1,\ldots,v_{d-1}$, and for each choice, we can check whether $sv_1\cdots v_{d-1}$ is a path in $\mathcal{O}(d)$ time.
    For each path $sv_1\cdots v_{d-1}$, we find a shortest path of $G-(\{v_j:j\in[d-2]\}\cup\{s\})$ between~$v_{d-1}$ and~$t$ in~$\mathcal{O}(n+m)$, if it exists.
    By concatenating $sv_1\cdots v_{d-1}$ and the shortest path, we gain a path of length at least~$d$ between~$s$ and~$t$.
    In addition, at least one shortest path of length at least~$d$ between~$s$ and~$t$ is found by the process, if it exists.
    Thus, one can find a shortest path of length at least~$d$ between~$s$ and~$t$ in $\mathcal{O}(n^{d-1}(n+m))$ time.
    
    We now find a shortest cycle of length at least~$d$ passing through~$s$.
    We may assume that $d\geq3$, because otherwise it can be done in~$\mathcal{O}(nm)$ time by finding a shortest cycle passing through~$s$.
    For every pair $(s',t')$ of distinct neighbours of~$s$, we find a shortest path of~$G-s$ of length at least~$d-2$ between them, if it exists.
    Note that this can be done in~$\mathcal{O}(n^{d-1}(n+m))$.
    By concatenating~$s'st'$ and the shortest path, we gain a cycle of length at least~$d$ passing through~$s$.
    In addition, at least one shortest cycle of length at least~$d$ passing through~$s$ is found by the process, if it exists.
    Thus, one can find a shortest cycle of length at least~$d$ passing through~$s$ in $\mathcal{O}(n^{d-1}(n+m))$ time.
\end{proof}

\section{Coarse ear-decompositions}
\label{sec:coarse}

Before proving Theorem~\ref{thm:main1}, in this section, we introduce $\ell$-coarse ear-decompositions in graphs and investigate their properties.
For a graph~$G$, we denote by~$V_{\geq 3}(G)$ the set of vertices of~$G$ having degree at least~$3$.
For positive integers~${\ell,t,\mu_1,\ldots,\mu_t}$ with $\ell\geq3$, an \emph{$\ell$-coarse ear-decomposition} in~$G$ is a subgraph
\[
    \mathcal{H} \coloneqq \bigcup_{i\in[t]} \bigcup_{j\in[\mu_i]} P_{i,j}
\]
of~$G$ where~$\{P_{i,j} : i \in [t], j \in [\mu_i]\}$ is a collection of pairwise edge-disjoint paths and cycles in~$G$ satisfying conditions {\ref{cond:coarse1}--\ref{cond:coarse3}} below.
Each~$P_{i,1}$ is a cycle which shares at most one vertex with the union of the previous~$P_{i',j'}$ (in the lexicographic order), and each $P_{i,j}$ with ${j>1}$ is a path sharing only its ends with the previous $P_{i',j'}$.
We denote by~$\mathcal{P}(\mathcal{H})$ the set of all pairs $(i,j)$ with $i\in[t]$ and $j\in[\mu_i]$.

The formal definition of an $\ell$-coarse ear-decomposition is as follows.
Let~$H_{0,0}$ be a null graph, let~$\mu_0 \coloneqq0$, and let~$Y_{0,0}$ and~$Z_{0,0}$ be empty sets.
For each $(i,j)\in\mathcal{P}(\mathcal{H})$,
\[
    H_{i,j} \coloneqq \bigcup_{\substack{(i',j')\in\mathcal{P}(\mathcal{H})\\(i',j')\leq_L(i,j)}}P_{i',j'},
\]
let ${Y_{i,j}\coloneqq B_{H_{i,j}}(V_{\geq3}(H_{i,j}),\ell-1)}$, and let ${Z_{i,j}\coloneqq B_{G-(V(H_{i,j})\setminus Y_{i,j})}(Y_{i,j},1)}$. 

An \emph{$\ell$-extendable path} of~$H_{i,j}$ is an $H_{i,j}$-path~$P$ in~$G-Z_{i,j}$ with ends~$a$ and~$b$ such that either~$a$ and~$b$ are in distinct components of~$H_{i,j}$, or it holds that
\[
    \abs{E(P)}+\dist_{H_{i,j}}(a,b)\geq\ell.
\]
An \emph{$\ell$-cycle} is a cycle of length at least~$\ell$.
Then the aforementioned conditions are as follows.

\begin{enumerate}[label=(\Alph*)]
    \item\label{cond:coarse1} For each $i\in[t]$, $P_{i,1}$ satisfies the following.
    \begin{enumerate}[label=(A\arabic*)]
        \item\label{cond:coarse1-1} If $G-Z_{i-1,\mu_{i-1}}$ has an $\ell$-cycle intersecting~$V(H_{i-1,\mu_{i-1}})$ at exactly one vertex, then~$P_{i,1}$ is a shortest such cycle with intersecting vertex~$c_i$.
        We say that~$P_{i,1}$ is of \emph{type~1}.
        \item\label{cond:coarse1-2} Otherwise, $P_{i,1}$ is a shortest $\ell$-cycle of~$G-Z_{i-1,\mu_{i-1}}$.
        We say that~$P_{i,1}$ is of \emph{type~2}.
    \end{enumerate}
    \item\label{cond:coarse2} For each~$i\in[t]$ and~$j\in[\mu_i]\setminus\{1\}$, $P_{i,j}$ is a shortest $\ell$-extendable path of~$H_{i,j-1}$ with ends~$a_{i,j}$ and~$b_{i,j}$.
    \item\label{cond:coarse3} For each $i\in[t]$, there is no $\ell$-extendable path of~$H_{i-1,\mu_{i-1}}$.
\end{enumerate}
If~$G$ has no $\ell$-cycle, then we define~$\mathcal{H}$ as a null graph with empty $\mathcal{P}(\mathcal{H})$.

For each $(i,j)\in\mathcal{P}(\mathcal{H})$, let
\[
    \Gamma_{i,j} \coloneqq
    \begin{cases}
        V(P_{i,j})\cap V(H_{i-1,\mu_{i-1}}) &\text{if }j=1,\\
        V(P_{i,j})\cap V(H_{i,j-1}) &\text{otherwise.}
    \end{cases}
\]

Throughout this paper, $\ell$ is a fixed integer at least~$3$, and when we are given an $\ell$-coarse ear-decomposition~$\mathcal{H}$, we will use those notations $H_{i,j}$, $Y_{i,j}$, $Z_{i,j}$, $\Gamma_{i,j}$, $a_{i,j}$, $b_{i,j}$, and~$c_i$ without repeatedly defining them.
Note that each~$H_{i,j}$ is an $\ell$-coarse ear-decomposition in~$G$, and for all $(i,j),(i',j')\in\mathcal{P}(\mathcal{H})$ with $(i,j)\leq_L(i',j')$, we have that ${Y_{i,j}\subseteq Y_{i',j'}}$ and ${Z_{i,j}\subseteq Z_{i',j'}}$.

We say that~$\mathcal{H}$ is \emph{maximal} if $G-Z_{t,\mu_t}$ has neither an $\ell$-extendable path of~$H_{t,\mu_t}$ nor an $\ell$-cycle intersecting~$V(H_{t,\mu_t})$ at less than two vertices.
The \emph{branch vertices} of~$\mathcal{H}$ are the vertices in~$V_{\geq3}(\mathcal{H})$, and the \emph{admissible vertices} of~$\mathcal{H}$ are the vertices in $V(\mathcal{H})\setminus Y_{t,\mu_t}$.
Note that the maximum degree of~$\mathcal{H}$ is at most~$4$ and every admissible vertex of~$\mathcal{H}$ has degree~$2$ in~$\mathcal{H}$.
We remark that each of~$a_{i,j}$ and~$b_{i,j}$ is an admissible vertex of~$H_{i,j-1}$ and each~$c_i$ is an admissible vertex of~$H_{i-1,\mu_{i-1}}$.
In addition, for every admissible vertex $v$ of~$\mathcal{H}$, there exists a unique $(i,j)\in\mathcal{P}(\mathcal{H})$ such that~$v \in V(P_{i,j})$.

We now analyse the running time of constructing a maximal $\ell$-coarse ear-decomposition.

\begin{lemma}
    For a graph~$G$ with~$n$ vertices and~$m$ edges, one can construct a maximal $\ell$-coarse ear-decomposition in~$G$ in~${\mathcal{O}(n^{\ell+1}(n+m))}$ time.
\end{lemma}
\begin{proof}
    We first show that for every $\ell$-coarse ear-decomposition~$\mathcal{H}$ in~$G$, we have $\abs{\mathcal{P}(\mathcal{H})}\leq2n$.
    On the one hand, the number of $(i,j)\in\mathcal{P}(\mathcal{H})$ with $\abs{V(P_{i,j})}\geq3$ is at most $n-\ell+1$ because~$P_{i,j}$ contains a vertex~$v$ for which~$(i,j)$ is the unique pair in~$\mathcal{P}(H_{i,j})$ such that $v\in V(P_{i,j})$.
    On the other hand, the number of $(i,j)\in\mathcal{P}(\mathcal{H})$ with $\abs{V(P_{i,j})}=2$ is at most~$\lfloor n/2\rfloor$ as each of~$a_{i,j}$ and~$b_{i,j}$ has degree~$2$ in~$H_{i,j-1}$.
    Thus, $\abs{\mathcal{P}(\mathcal{H})}\leq2n$.

    Hence, it suffices to show that given an $\ell$-coarse ear-decomposition $\mathcal{H}\coloneqq\bigcup_{i\in[t]}\bigcup_{j\in[\mu_i]}P_{i,j}$ in~$G$, one can either construct an $\ell$-coarse ear-decomposition in~$G$ by attaching one more path or cycle, or decide that $\mathcal{H}$ is maximal in $\mathcal{O}(n^\ell(n+m))$ time.
    Let $G'\coloneqq G-Z_{t,\mu_t}$.

    We first find an $\ell$-extendable path of~$\mathcal{H}$.
    For each pair~$(s,t)$ of distinct admissible vertices of~$\mathcal{H}$, by Lemma~\ref{lem:shortest} for $d=\max\{\ell-\dist_\mathcal{H}(s,t),1\}$ and ${G'-(V(\mathcal{H})\setminus\{s,t\})}$, one can find an $\ell$-extendable path of~$\mathcal{H}$ between~$s$ and~$t$ in ${\mathcal{O}(n^{d-1}(n+m)+nm)}$, if it exists.
    Since ${d\leq\ell-1}$, one can find a shortest $\ell$-extendable path of~$\mathcal{H}$ in ${\mathcal{O}(n^\ell(n+m))}$ time, if it exists.
    Thus, we may assume that there is no $\ell$-extendable path of~$\mathcal{H}$.
    
    We now find an $\ell$-cycle of~$G'$ intersecting~$V(\mathcal{H})$ at exactly one vertex.
    For each admissible vertex~$s$ of~$\mathcal{H}$, by Lemma~\ref{lem:shortest} for $d=\ell$ and ${G'-(V(\mathcal{H})\setminus\{s\})}$, one can find an $\ell$-cycle of~$G'$ intersecting~$V(\mathcal{H})$ only at~$s$ in~${\mathcal{O}(n^{\ell-1}(n+m))}$ time, if it exists.
    Hence, one can find a shortest $\ell$-cycle of~${G-Z_{t,\mu_t}}$ intersecting~$V(\mathcal{H})$ at exactly one vertex in ${\mathcal{O}(n^\ell(n+m))}$ time, if it exists.
    Thus, we may assume that there is no such cycle.

    Finally, we find an $\ell$-cycle of $G'-V(\mathcal{H})$ in ${\mathcal{O}(n^\ell(n+m))}$ time, if it exists, by applying Lemma~\ref{lem:shortest} to each vertex of ${G'-V(\mathcal{H})}$ for ${d=\ell}$ and ${G'-V(\mathcal{H})}$.
    If ${G'-V(\mathcal{H})}$ has no $\ell$-cycle, then we correctly decide that~$\mathcal{H}$ is maximal.
    This completes the proof.
\end{proof}

From the definition of an $\ell$-coarse ear-decomposition, we observe the following.

\begin{observation}\label{obs:structure}
    Let $\mathcal{H}\coloneqq\bigcup_{i\in[t]}\bigcup_{j\in[\mu_i]}P_{i,j}$ be an $\ell$-coarse ear-decomposition in a graph~$G$.
    Then the following hold.
    \begin{enumerate}[label=(\alph*)]
        \item\label{item:type2} For each $i\in[t]$, if $P_{i,1}$ is of type~2 and has a chord in~$G$, then $\abs{E(P_{i,1})}\leq2(\ell-1)$.
        \item\label{item:admissible} For $(i,j),(i',j')\in\mathcal{P}(\mathcal{H})$ with $(i,j)\leq_L(i',j')$, if a vertex $v\in V(H_{i,j})$ is an admissible vertex of~$H_{i',j'}$, then it is also an admissible vertex of~$H_{i,j}$.
        \item\label{item:extendable} For an $\ell$-extendable path~$Q$ of~$\mathcal{H}$, if both ends of~$Q$ are in $V(H_{i,j})$ for some $(i,j)\in\mathcal{P}(\mathcal{H})$, then~$Q$ is an $\ell$-extendable path of~$H_{i,j}$.
    \end{enumerate}
\end{observation}

We show that every cycle in an $\ell$-coarse ear-decomposition is indeed an $\ell$-cycle.

\begin{lemma}\label{lem:ell cycle}
    Let $\mathcal{H}\coloneqq\bigcup_{i\in[t]}\bigcup_{j\in[\mu_i]}P_{i,j}$ be an $\ell$-coarse ear-decomposition in a graph~$G$.
    Then every cycle in~$\mathcal{H}$ is an $\ell$-cycle in~$G$.
\end{lemma}
\begin{proof}
    We prove by induction on $x\coloneqq\abs{\mathcal{P}(\mathcal{H})}$.
    The statement obviously holds for~$x=0$.
    Thus, we may assume that $x\geq1$.
    Since $P_{t,1}$ is an $\ell$-cycle, if $\mu_t=1$, then the statement easily follows from the inductive hypothesis as $P_{t,1}$ is the unique cycle in~$\mathcal{H}$ that is not a cycle in~$H_{t-1,\mu_{t-1}}$.
    Thus, we may assume that $\mu_t\geq2$.
    
    Let~$C$ be an arbitrary cycle in~$\mathcal{H}$.
    We show that~$C$ is an $\ell$-cycle in~$G$.
    By the inductive hypothesis, we may assume that~$C$ is not a cycle in~$H_{t,\mu_t-1}$.
    Since every internal vertex of~$P_{t,\mu_t}$ has degree~$2$ in~$\mathcal{H}$, every edge of~$P_{t,\mu_t}$ is an edge of~$C$.
    By the definition of an $\ell$-extendable path, we have that
    \[
    	\abs{E(C)}\geq\abs{E(P_{t,\mu_t})}+\dist_{H_{t,\mu_t-1}}(a_{t,\mu_t},b_{t,\mu_t})\geq\ell.
    \]
    Thus,~$C$ is an $\ell$-cycle in~$G$.
    
    This completes the proof by induction.
\end{proof}

We present equivalent conditions for having degree larger than~$2$ in an $\ell$-coarse ear-decomposition.

\begin{lemma}\label{lem:unique}
    Let $\mathcal{H}\coloneqq\bigcup_{i\in[t]}\bigcup_{j\in[\mu_i]}P_{i,j}$ be an $\ell$-coarse ear-decomposition in a graph and let~$v$ be a vertex of~$\mathcal{H}$.
    Then the following hold.
    \begin{itemize}
        \item $\deg_\mathcal{H}(v)=3$ if and only if there exists a unique $(i,j)\in\mathcal{P}(\mathcal{H})$ such that ${v\in\{a_{i,j},b_{i,j}\}}$.
        \item $\deg_\mathcal{H}(v)=4$ if and only if there exists a unique ${i\in[t]}$ such that $v=c_i$.
    \end{itemize}
\end{lemma}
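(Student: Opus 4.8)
The plan is to trace carefully how the degree of a vertex~$v$ can increase as we build up the coarse ear-decomposition one ear~$P_{i,j}$ at a time, keeping in mind Observation~\ref{obs:structure}\ref{item:degree} (degrees stay in~$\{2,3,4\}$) and the fact that each~$P_{i,j}$ is an induced subgraph of~$G$ edge-disjoint from the earlier ears. First I would record the baseline fact: when a vertex~$v$ first appears in some~$P_{i,j}$, it has degree~$2$ there unless~$v$ is an endpoint identified with an existing vertex of~$H_{i,j-1}$; indeed an internal vertex of an ear~$P_{i,j}$ (for~$j\geq 2$, an $H_{i,j-1}$-path) contributes exactly two edges at~$v$, and the cycle~$P_{i,1}$ of type~2, consisting entirely of new vertices, also contributes exactly two edges at each of its vertices. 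So degree can exceed~$2$ at~$v$ only at moments when a later ear is attached to~$v$ as one of its ``contact'' vertices: either $v\in\{a_{i,j},b_{i,j}\}$ for some~$j\geq 2$, or $v=c_i$ as the unique branch point of a type-1 ear~$P_{i,1}$.

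Next I would make the counting precise. Each time $v$ serves as an endpoint~$a_{i,j}$ or~$b_{i,j}$ of an $H_{i,j-1}$-path, the ear~$P_{i,j}$ adds exactly one new edge at~$v$ (its first or last edge), so $\deg_{\mathcal H}(v)$ increases by~$1$; if $v$ is both $a_{i,j}$ and $b_{i,j}$ — which cannot happen, since a path has distinct ends — it would increase by~$2$, so in fact each such event increases the degree by exactly~$1$. Each time $v=c_i$, the type-1 cycle~$P_{i,1}$ passes through~$v$ using two edges at~$v$, both new (the rest of $P_{i,1}$ avoids $V(H_{i-1,\ell_{i-1}})$), so $\deg_{\mathcal H}(v)$ increases by~$2$. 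Since the degree starts at~$2$ and never exceeds~$4$, the total increase over all these events is at most~$2$. This forces: either no such event happens (degree~$2$), or exactly one ``$a/b$-event'' and nothing else (degree~$3$), or exactly one ``$c_i$-event'' and nothing else (degree~$4$); a ``$c_i$-event'' together with any other event, or two ``$a/b$-events'' plus a ``$c_i$-event'', would push the degree past~$4$. This yields both stated equivalences, and the uniqueness of the index~$(i,j)$ or~$i$ is exactly the statement that only one such event occurs.

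One subtlety I would be careful about — and I expect this to be the only real obstacle — is confirming that no double-counting or hidden extra contribution occurs: e.g. that when $v=c_i$, $v$ is genuinely \emph{not} also one of $a_{i,1},b_{i,1}$ (type-1 ears are cycles, so they have the single contact vertex~$c_i$ and $\Gamma_{i,1}=\{c_i\}$ by~\ref{cond:coarse2-1}, not endpoints of a path), and that the two new edges of a type-1 cycle at~$c_i$ are really distinct edges not already present (this uses edge-disjointness from Observation~\ref{obs:structure}\ref{item:edge-disjoint} and that the internal vertices of~$P_{i,1}$, hence also its two edges at~$c_i$, lie outside $H_{i-1,\ell_{i-1}}$). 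I would also note that the remark immediately before the lemma — every admissible vertex lies on a unique~$P_{i,j}$ — together with Observation~\ref{obs:structure}\ref{item:degree} is consistent with, and in fact a low-degree special case of, this bookkeeping. With these points checked, both bullet points follow, completing the proof.
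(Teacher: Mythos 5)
Your bookkeeping has one genuine gap, and it sits exactly where the lemma has real content. You correctly argue that each ``$a/b$\nobreakdash-event'' raises $\deg_{\mathcal H}(v)$ by $1$, each ``$c_i$\nobreakdash-event'' by $2$, and that Observation~\ref{obs:structure}\ref{item:degree} caps the total increase at $2$. But that cap does \emph{not} force the trichotomy you state: two distinct $a/b$-events at the same vertex also give a total increase of exactly $2$, i.e.\ $\deg_{\mathcal H}(v)=4$, and this case is silently omitted from your list. Were it possible, it would produce a degree-$4$ vertex that is not $c_i$ for any $i$ (falsifying the second bullet) and a vertex lying in two distinct sets $\{a_{i,j},b_{i,j}\}$ (destroying the uniqueness that Corollary~\ref{cor:close ends} relies on). The degree bound alone cannot exclude it; you need the structural reason why a vertex can serve as a contact point at most once.

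That reason is the admissibility mechanism, which you mention only as a consistency check at the end rather than using it where it is needed. By \ref{cond:coarse3} (resp.\ \ref{cond:coarse2-1}), the new ear lives in $G-Z_{i,j-1}$ (resp.\ $G-Z_{i-1,\ell_{i-1}}$), so its contact vertices avoid $Z\supseteq Y=B_{H}(V_{\geq3}(H),2)$; hence they are admissible, i.e.\ degree-$2$, vertices of the current decomposition. Consequently, once $v$ acquires degree $3$ it is a branch vertex, lies in $Y_{p,q}\subseteq Z_{p,q}$ for every later pair $(p,q)$, and no later ear can contain $v$ at all --- in particular $v$ cannot be an endpoint of a second path-ear. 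This is precisely what the paper's inductive proof records: the ends of the newest ear are admissible in the preceding decomposition, so by the inductive hypothesis they lie in no earlier $\Gamma_{p,q}$, and only they become new branch vertices. With this one additional observation inserted, your counting closes and the rest of your argument (including the careful treatment of type-1 cycles at $c_i$ and edge-disjointness) is sound.
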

\begin{proof}
    We proceed to show by induction on~${x \coloneqq \abs{\mathcal{P}(\mathcal{H})}}$ that both statements hold. 
    The statements obviously hold for~${x = 0}$.
    Thus, we may assume that~${x \geq 1}$.

    Suppose that $\mu_t\geq2$.
    Since~$a_{t,\mu_t}$ and~$b_{t,\mu_t}$ are admissible vertices of~$H_{t,\mu_t-1}$, they have degree~$2$ in~$H_{t,\mu_t-1}$.
    By induction, for every $(i,j)\in\mathcal{P}(H_{t,\mu_t-1})$, neither~$a_{t,\mu_t}$ nor~$b_{t,\mu_t}$ is in~$\Gamma_{i,j}$.
    Since~$a_{t,\mu_t}$ and~$b_{t,\mu_t}$ have degree~$3$ in~$\mathcal{H}$ and every internal vertex of~$P_{t,\mu_t}$ has degree~$2$ in~$\mathcal{H}$, we have that $V_{\geq3}(\mathcal{H})\setminus V_{\geq3}(H_{t,\mu_t-1})=\{a_{t,\mu_t},b_{t,\mu_t}\}$.
    Hence, the statements hold.

    Thus, we may assume that $\mu_t=1$.
    Note that $t\geq2$ as $x\geq2$.
    Let $H'\coloneqq H_{t-1,\mu_{t-1}}$.
    We may assume that $P_{t,1}$ is of type~1, because otherwise the statements easily follow from the inductive hypothesis as~$P_{t,1}$ and~$H'$ are vertex-disjoint.
    Since~$c_t$ is an admissible vertex of~$H'$, it has degree~$2$ in~$H'$.
    By induction, for every $(i,j)\in\mathcal{P}(H')$, $c_t\notin\Gamma_{i,j}$.
    Since~$c_t$ has degree~$4$ in~$\mathcal{H}$ and every vertex in $V(P_{t,1})\setminus\{c_t\}$ has degree~$2$ in~$\mathcal{H}$, we have that $V_{\geq3}(\mathcal{H})\setminus V_{\geq3}(H')=\{c_t\}$.
    Hence, the statements hold.
    
    This completes the proof by induction.
\end{proof}

We have the following corollary of Lemma~\ref{lem:unique}.

\begin{corollary}\label{cor:close ends}
    Let $\mathcal{H}\coloneqq\bigcup_{i\in[t]}\bigcup_{j\in[\mu_i]}P_{i,j}$ be an $\ell$-coarse ear-decomposition in a graph.
    If~$\mathcal{H}$ has distinct branch vertices~$v_1$ and~$v_2$ with $\dist_\mathcal{H}(v_1,v_2)\leq\ell-1$, then there exists a unique $(p,q)\in\mathcal{P}(\mathcal{H})$ such that $\{v_1,v_2\}=\{a_{p,q},b_{p,q}\}$.
\end{corollary}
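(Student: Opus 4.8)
The plan is to deduce the statement directly from \Cref{lem:unique}. Suppose $\mathcal{H}$ has an edge $v_1v_2$ with both $v_1$ and $v_2$ in $V_{\geq3}(\mathcal{H})$. By \Cref{lem:unique}, each of $v_1,v_2$ either equals some $c_i$ (so has degree $4$) or is an endpoint $a_{p,q}$ or $b_{p,q}$ of a unique ear $P_{p,q}$ with $q \geq 2$ (so has degree $3$). I would first rule out the possibility that either $v_1$ or $v_2$ is some $c_i$. If $v_1 = c_i$, then $v_1$ lies on the type-$1$ cycle $P_{i,1}$ and on $H_{i-1,\ell_{i-1}}$, and its four incident edges in $\mathcal{H}$ are the two edges of $P_{i,1}$ at $c_i$ plus (since $c_i$ is an admissible, hence degree-$2$, vertex of $H_{i-1,\ell_{i-1}}$) the two edges of $H_{i-1,\ell_{i-1}}$ at $c_i$; the point is that $P_{i,1}$ is an induced cycle by \Cref{obs:structure}\ref{item:shortest}, and I would argue that $v_2$ being adjacent to $c_i$ via a $\mathcal{H}$-edge forces $v_2$ to be an internal vertex of $P_{i,1}$ (hence degree $2$, contradiction) or a degree-$2$ neighbour of $c_i$ on $H_{i-1,\ell_{i-1}}$ — this is where a little care is needed.

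Actually, a cleaner route avoids case analysis on $c_i$: recall the edge $v_1 v_2$ lies on exactly one ear $P_{p,q}$ by \Cref{obs:structure}\ref{item:edge-disjoint}, since each edge of $\mathcal{H}$ belongs to a unique $P_{p,q}$. So both $v_1$ and $v_2$ lie on $P_{p,q}$. If $q \geq 2$, then $P_{p,q}$ is an $H_{p,q-1}$-path whose internal vertices have degree $2$ in $\mathcal{H}$ and whose only vertices of possible degree $\geq 3$ are its two ends $a_{p,q}, b_{p,q}$; since an edge of $P_{p,q}$ has at most one end that is an end of $P_{p,q}$ unless $P_{p,q}$ has length exactly $1$, and in the length-$1$ case $\{v_1,v_2\} = \{a_{p,q},b_{p,q}\}$, we are done — I would need to check that $a_{p,q} \ne b_{p,q}$, which follows since $P_{p,q}$ is a path of length $\geq 1$. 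If $q = 1$, then $P_{p,1}$ is a cycle (type $1$ or type $2$), which by \Cref{obs:structure}\ref{item:shortest} is an induced cycle in $G$, so it has girth at least $3$ and no chords; its only vertex of degree $\geq 3$ in $\mathcal{H}$ is $c_p$ (type $1$) or none (type $2$), so it cannot contain an edge between two branch vertices — this disposes of the $q=1$ case entirely.

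Putting this together: the edge $v_1v_2$ lies on a unique ear $P_{p,q}$; the case $q=1$ is impossible, and for $q \geq 2$ the only way a single edge of the path $P_{p,q}$ can have both endpoints in $V_{\geq 3}(\mathcal{H})$ is if $P_{p,q}$ has length $1$ and $\{v_1,v_2\} = \{a_{p,q},b_{p,q}\}$. Uniqueness of $(p,q)$ is immediate from \Cref{obs:structure}\ref{item:edge-disjoint}, or alternatively from the uniqueness clauses in \Cref{lem:unique} applied to $v_1$ (which pins down $(p,q)$ with $v_1 \in \{a_{p,q},b_{p,q}\}$). The main obstacle, and the only place demanding genuine attention, is verifying that in the $q=1$ subcase the induced-cycle property of $P_{p,1}$ truly forbids an edge between two degree-$\geq 3$ vertices: one must use that a type-$1$ cycle meets the rest of $\mathcal{H}$ only at $c_p$ and a type-$2$ cycle is vertex-disjoint from the rest, so no internal vertex of $P_{p,1}$ can acquire extra $\mathcal{H}$-edges.
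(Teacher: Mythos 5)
There is a genuine gap, and it sits exactly where you flagged ``the only place demanding genuine attention''. Your argument rests on the claim that the only vertices of an ear $P_{p,q}$ that can have degree at least~$3$ in~$\mathcal{H}$ are its own attachment vertices ($c_p$ for a type-1 cycle, none for a type-2 cycle, the two ends for ${q\geq2}$). This is false: later ears attach to admissible (degree-$2$) vertices of earlier ears and turn them into branch vertices. In particular, two \emph{adjacent} vertices of a cycle $P_{p,1}$ (or two adjacent internal vertices of a long path ear) can become branch vertices \emph{simultaneously}, by being the two ends $a_{i,j}$ and $b_{i,j}$ of a single later ear $P_{i,j}$; this is precisely the situation ${\dist_{H_{i,j}-E(P_{i,j})}(a_{i,j},b_{i,j})=1}$ defining $\mathcal{P}'(\mathcal{H})$ in \Cref{sec:maintheorem}, which the paper relies on in \Cref{lem:packing2}. (Note that the admissibility condition only forbids a \emph{single} new end from landing within distance~$2$ of an \emph{existing} branch vertex; it does not forbid two new ends of the same ear from being adjacent to each other.) Consequently your disposal of the ${q=1}$ case fails, and in the ${q\geq2}$ case the edge $v_1v_2$ need not join the two ends of the ear containing it. More fundamentally, the pair $(p,q)$ in the conclusion is in general \emph{not} the ear containing the edge $v_1v_2$: in the scenario above the edge lies on $P_{p,1}$, but the correct pair is $(i,j)$.

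The paper's proof works with the vertices rather than with the edge. By \Cref{lem:unique}, each $v_i$ lies in $\Gamma_{p_i,q_i}$ for a unique pair $(p_i,q_i)$. If the two pairs differed, say ${(p_1,q_1)<_L(p_2,q_2)}$, then since $B_{H_{p_1,q_1}}(v_1,1)\subseteq Z_{p_1,q_1}$ and all later ears avoid $Z_{p_1,q_1}$, no later ear adds a neighbour to~$v_1$, so ${v_2\in B_{\mathcal{H}}(v_1,1)=B_{H_{p_1,q_1}}(v_1,1)\subseteq Z_{p_1,q_1}}$; this contradicts the fact that $v_2$, as a vertex of $\Gamma_{p_2,q_2}$, was chosen in a graph from which $Z_{p_1,q_1}$ had already been deleted. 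Hence ${(p_1,q_1)=(p_2,q_2)}$, and since ${v_1\neq v_2}$ the set $\Gamma_{p_1,q_1}$ has two elements, so $P_{p_1,q_1}$ is a path and ${\{v_1,v_2\}=\{a_{p_1,q_1},b_{p_1,q_1}\}}$. To salvage your route you would have to replace ``the ear containing the edge'' by ``the ear whose $\Gamma$ contains $v_i$'' and then show that the two resulting pairs coincide --- which is exactly the paper's argument.
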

\begin{proof}
    For each ${i\in[2]}$, by Lemma~\ref{lem:unique}, there exists a unique $(p_i,q_i)\in\mathcal{P}(\mathcal{H})$ such that ${v_i\in\Gamma_{p_i,q_i}}$.
    Towards a contradiction, suppose that $(p_1,q_1)\neq(p_2,q_2)$.
    Without loss of generality, we may assume that $(p_1,q_1)<_L(p_2,q_2)$.
    Since $B_{H_{p_1,q_1}}(v_1,\ell-1)\subseteq Z_{p_1,q_1}$, we deduce that $B_{H_{p_1,q_1}}(v_1,\ell-1)=B_\mathcal{H}(v_1,\ell-1)$, and therefore $v_2\in B_{H_{p_1,q_1}}(v_1,\ell-1)\subseteq Z_{p_1,q_1}$, a contradiction.
    Thus, $(p_1,q_1)=(p_2,q_2)$.
    
    Since~$v_1$ and~$v_2$ are distinct, $P_{p_1,q_1}$ is not a cycle, so $\{v_1,v_2\}=\{a_{p_1,q_1},b_{p_1,q_1}\}$.
\end{proof}

From now on, we consider a graph~$G$ having no $\ell$-cycle of length at most~$2\ell$ and its $\ell$-coarse ear-decomposition~$\mathcal{H}$.
The following proposition shows that if~$\mathcal{H}$ is maximal and has a chord in~$G$, then both ends of the chord are close to some branch vertex of~$\mathcal{H}$.

\begin{proposition}\label{prop:induced}
    Let~$G$ be a graph which has no $\ell$-cycle of length at most~$2\ell$ and let ${\mathcal{H}\coloneqq\bigcup_{i\in[t]}\bigcup_{j\in[\mu_i]}P_{i,j}}$ be a maximal $\ell$-coarse ear-decomposition in~$G$.
    If~$\mathcal{H}$ has a chord~$ab$ in~$G$, then~$\mathcal{H}$ has a branch vertex~$x$ such that $\{a,b\}\subseteq B_\mathcal{H}(x,\ell-1)$.
\end{proposition}

To prove Proposition~\ref{prop:induced}, we will use the following three lemmas.
In the next lemma, we present properties of $\ell$-extendable paths of length at most~$\ell-1$.

\begin{lemma}\label{lem:lengthy}
    Let~$G$ be a graph which has no $\ell$-cycle of length at most~$2\ell$ and let ${\mathcal{H}\coloneqq\bigcup_{i\in[t]}\bigcup_{j\in[\mu_i]}P_{i,j}}$ be an $\ell$-coarse ear-decomposition in~$G$.
    If~$\mathcal{H}$ has an $\ell$-extendable path~$Q$ of length at most~$\ell-1$, then 
    \begin{itemize}
        \item $t\geq2$ and
        \item one end of~$Q$ is in~$V(H_{t-1,\mu_{t-1}})$ and the other is in~$V(P_{t,1})$.
    \end{itemize}
    In addition, $P_{t,1}$ is of type~2 and $\mu_t=1$.
\end{lemma}
\begin{proof}
    Let~$a$ and~$b$ be the ends of~$Q$.
    Note that~$a$ and~$b$ are admissible vertices of~$\mathcal{H}$.
    Thus, there are unique pairs $(p,q),(r,s)\in\mathcal{P}(\mathcal{H})$ such that $a\in V(P_{p,q})$ and $b\in V(P_{r,s})$.
    By symmetry, we may assume that $(p,q)\leq_L(r,s)$.
    We remark that every cycle in $\mathcal{H}\cup Q$ is an $\ell$-cycle in~$G$ by the definition of an $\ell$-extendable path.

    We consider the following four cases.

    \medskip
    \noindent\textbf{Case 1: $(p,q)=(r,s)$ and $P_{p,q}$ is a cycle of type~2.}

    We will derive a contradiction.
    By~\ref{cond:coarse1-2}, a shortest subpath of~$P_{p,q}$ between~$a$ and~$b$ is of length at most~$\abs{E(Q)}$.
    Thus, by concatenating the subpath and~$Q$, we obtain a cycle of length at most~$2\ell$ in~$\mathcal{H}$ which is an $\ell$-cycle in~$G$, a contradiction.

    \medskip
    \noindent\textbf{Case 2: $(p,q)=(r,s)$ and $P_{p,q}$ is not a cycle of type~2.}

    We will derive a contradiction.
    Note that $\Gamma_{p,q}\neq\emptyset$ so that $\widetilde{P_{p,q}}\coloneqq P_{p,q}-\Gamma_{p,q}$ is a path.
    Let~$P'_{p,q}$ be the graph obtained from~$P_{p,q}$ by replacing $a\widetilde{P_{p,q}}b$ with~$Q$.
    Note that~$a\widetilde{P_{p,q}}b$ and~$Q$ form a cycle in~$\mathcal{H}\cup Q$ which is an $\ell$-cycle in~$G$.
    Since~$G$ has no $\ell$-cycle of length at most~$2\ell$ and~$Q$ is of length at most~$\ell-1$, we deduce that~$a\widetilde{P_{p,q}}b$ is of length at least~$\ell+2$, and therefore~$P'_{p,q}$ is shorter than~$P_{p,q}$.
    Since~$a$ and~$b$ are admissible vertices of~$\mathcal{H}$, $\dist_{P_{p,q}}(\{a,b\},\Gamma_{p,q})\geq\ell$, and therefore the length of~$P'_{p,q}$ is at least $2\ell+1>\ell$.

    If $q=1$, then~$P'_{p,q}$ is an $\ell$-cycle of $G-Z_{p-1,\mu_{p-1}}$ intersecting $V(H_{p-1,\mu_{p-1}})$ only at~$c_p$, contradicting~\ref{cond:coarse1-1} as $P'_{p,q}$ is shorter than~$P_{p,q}$.
    Otherwise, $P'_{p,q}$ is an $\ell$-extendable path of~$H_{p,q-1}$, contradicting~\ref{cond:coarse2} by the same reason.

    \medskip
    \noindent\textbf{Case 3: $(p,q)<_L(r,s)$ and $s\geq2$.}

    We will derive a contradiction.
    Note that $a\notin\Gamma_{r,s}$ as it is an admissible vertex of~$\mathcal{H}$.
    Let~$Q'$ be the path obtained by concatenating~$Q$ and $bP_{r,s}b_{r,s}$.
    Since $a\in V(H_{r,s-1})$, by Observation~\ref{obs:structure}\ref{item:admissible}, $a$ is an admissible vertex of~$H_{r,s-1}$, and therefore~$Q'$ is an $H_{r,s-1}$-path in $G-Z_{r,s-1}$.
    Since~$b$ is an admissible vertex of~$H_{r,s}$, we have $\dist_{P_{r,s}}(b,\Gamma_{r,s})\geq\ell$.
    Thus, $\abs{E(P_{r,s})}>\abs{E(Q')}\geq\ell$, and therefore~$Q'$ is an $\ell$-extendable path of~$H_{r,s-1}$ shorter than~$P_{r,s}$, contradicting~\ref{cond:coarse2}.
    
    \medskip
    \noindent\textbf{Case 4: $(p,q)<_L(r,s)$ and $s=1$.}

    We now show that the statement holds.
    Since $(p,q)<_L(r,s)$, we have that $r\geq2$ and $a\in V(H_{r-1,\mu_{r-1}})$.
    In addition, every path of~$\mathcal{H}$ between~$a$ and~$b$ must contain a branch vertex of~$\mathcal{H}$.

    We first show that~$P_{r,1}$ is of type~2.
    Suppose not.
    Let~$P$ be the path obtained by concatenating~$Q$ and any subpath of~$P_{r,1}$ between~$b$ and~$c_r$.
    Since~$b$ is an admissible vertex of~$\mathcal{H}$, we have that
    \[
        \abs{E(P)}\geq1+\dist_{P_{r,1}}(b,c_r)\geq\ell+1.
    \]
    Recall that~$c_r$ is an admissible vertex of~$H_{r-1,\mu_{r-1}}$.
    Since $a\in V(H_{r-1,\mu_{r-1}})$, by Observation~\ref{obs:structure}\ref{item:admissible}, $a$ is an admissible vertex of~$H_{r-1,\mu_{r-1}}$.
    Therefore,~$P$ is an $\ell$-extendable path of~$H_{r-1,\mu_{r-1}}$, contradicting~\ref{cond:coarse3}.
    Hence, $P_{r,1}$ is of type~2.

    To show the statement, it suffices to show that $r=t$ and $\mu_t=1$.
    Towards a contradiction, suppose that~${r<t}$ or~${\mu_t\geq2}$.
    Since~$P_{r,1}$ is of type~2, $a$ and~$b$ are in distinct components of~$H_{r,1}$.
    Hence, $Q$ is an $\ell$-extendable path of~$H_{r,1}$.
    Since $r<t$ or $\mu_t\geq2$, we deduce from~\ref{cond:coarse3} that $\mu_r\geq2$.
    In addition, by~\ref{cond:coarse2}, for each $i\in[\mu_r]\setminus\{1\}$, the length of $P_{r,i}$ is at most $\abs{E(Q)}\leq\ell-1$.

    If $\Gamma_{r,2}\subseteq V(P_{r,1})$, then by~\ref{cond:coarse1-2}, a shortest subpath of~$P_{r,1}$ between~$a_{r,2}$ and~$b_{r,2}$ is of length at most~$\abs{E(P_{r,2})}$.
    Thus, by concatenating the subpath and~$P_{r,2}$, we obtain a cycle of length at most~$2\ell$ in~$\mathcal{H}$ which is an $\ell$-cycle in~$G$, a contradiction,
    
    Hence, at least one of~$a_{r,2}$ and~$b_{r,2}$ is in $V(H_{r-1,\mu_{r-1}})$.
    By symmetry, we may assume that $a_{r,2}\in V(H_{r-1,\mu_{r-1}})$.
    By~\ref{cond:coarse3} and Observation~\ref{obs:structure}\ref{item:extendable}, $b_{r,2}$ must be in $V(P_{r,1})$.
    Since~$a$ and~$b$ are admissible vertices of~$\mathcal{H}$, we have $\dist_\mathcal{H}(\{a,b\},\Gamma_{r,2})\geq\ell$.
    Note that no internal vertex of~$Q$ is in $V(\mathcal{H})$.
    Then by concatenating~$Q$, $P_{r,2}$, and any subpath of~$P_{r,1}$ between~$b$ and~$b_{r,2}$, we obtain an $\ell$-extendable path of~$H_{r-1,\mu_{r-1}}$, contradicting~\ref{cond:coarse3}.
    Hence, ${r=t}$ and ${\mu_t=1}$, so the statement holds.

    \medskip

    This completes the proof.
\end{proof}

In the following two lemmas, we consider chords of~$\mathcal{H}$ in~$G$.

\begin{lemma}\label{lem:no edge1}
    Let~$G$ be a graph which has no $\ell$-cycle of length at most~$2\ell$ and let ${\mathcal{H}\coloneqq\bigcup_{i\in[t]}\bigcup_{j\in[\mu_i]}P_{i,j}}$ be an $\ell$-coarse ear-decomposition in~$G$ such that ${\mu_t\geq2}$.
    If~$\mathcal{H}$ has a chord~$ab$ in~$G$ with ${a\in V(H_{t,\mu_t-1})\setminus\Gamma_{t,\mu_t}}$ and~${b\in V(P_{t,\mu_t})\setminus\Gamma_{t,\mu_t}}$, then there is $x\in\Gamma_{t,\mu_t}$ such that $\{a,b\}\subseteq B_\mathcal{H}(x,\ell-1)$.
\end{lemma}
\begin{proof}
    Note that~$a$ is an admissible vertex of~$H_{t,\mu_t-1}$, because otherwise $b\in Z_{t,\mu_t-1}$.
    Since~$b$ is an internal vertex of~$P_{t,\mu_t}$, the length of~$P_{t,\mu_t}$ is at least~$2$.

    We divide into two cases depending on the length of~$P_{t,\mu_t}$.

    \medskip
    \noindent\textbf{Case 1: $P_{t,\mu_t}$ is of length at least~$3$.}
    
    By symmetry, we may assume that $\dist_{P_{t,\mu_t}}(b,a_{t,\mu_t})\geq2$.
    Let~$P$ be the path obtained by concatenating~$ab$ and $bP_{t,\mu_t}b_{t,\mu_t}$.
    Note that~$P$ is shorter than~$P_{t,\mu_t}$.
    Since~$a$ is an admissible vertex of~$H_{t,\mu_t-1}$, $P$ is an $H_{t,\mu_t-1}$-path in~${G-Z_{t,\mu_t-1}}$.

    We show that $\{a,b\}\subseteq B_\mathcal{H}(b_{t,\mu_t},\ell-1)$.
    Suppose not.
    Then one of $\dist_\mathcal{H}(a,b_{t,\mu_t})$ and $\dist_\mathcal{H}(b,b_{t,\mu_t})$ is at least~$\ell$.
    Hence,
    \begin{align*}
        \abs{E(P)}+\dist_{H_{t,\mu_t-1}}(a,b_{t,\mu_t})
        &=1+\dist_{P_{t,\mu_t}}(b,b_{t,\mu_t})+\dist_{H_{t,\mu_t-1}}(a,b_{t,\mu_t})\\
        &\geq1+\dist_\mathcal{H}(b,b_{t,\mu_t})+\dist_\mathcal{H}(a,b_{t,\mu_t})\geq\ell+2,
    \end{align*}
    so that~$P$ is an $\ell$-extendable path of~$H_{t,\mu_t-1}$ shorter than~$P_{t,\mu_t}$, contradicting~\ref{cond:coarse2}.
    Thus, ${\{a,b\}\subseteq B_\mathcal{H}(b_{t,\mu_t},\ell-1)}$.

    \medskip
    \noindent\textbf{Case 2: $P_{t,\mu_t}$ is of length~$2$.}

    By Lemma~\ref{lem:lengthy} for~$H_{t,\mu_t-1}$ and~$P_{t,\mu_t}$, we have that~$t\geq2$, ${\mu_t=2}$, one of~$a_{t,2}$ and~$b_{t,2}$ is in~$V(H_{t-1,\mu_{t-1}})$, the other is in~$V(P_{t,1})$, and~$P_{t,1}$ is of type~2.
    By symmetry, we may assume that ${a_{t,2}\in V(H_{t-1,\mu_{t-1}})}$ and ${b_{t,2}\in V(P_{t,1})}$.

    We first assume that $a\in V(P_{t,1})$.
    By Lemma~\ref{lem:lengthy} for $H_{t,1}$, every $\ell$-extendable path of~$H_{t,1}$ having length~$2$ has one end in $H_{t-1,\mu_{t-1}}$.
    Therefore, $abb_{t,2}$ is not an $\ell$-extendable path of~$H_{t,1}$ as both~$a$ and~$b_{t,2}$ are in $V(P_{t,1})$.
    This implies that $\dist_{P_{t,1}}(a,b_{t,2})<\ell-2$, and therefore $\{a,b\}\subseteq B_\mathcal{H}(b_{t,2},\ell-1)$.

    We now assume that $a\notin V(P_{t,1})$.
    Observe that $a\in V(H_{t-1,\mu_{t-1}})$.
    By~\ref{cond:coarse3}, $G-Z_{t-1,\mu_{t-1}}$ has no $\ell$-extendable path of~$H_{t-1,\mu_{t-1}}$.
    Thus, $aba_{t,2}$ is not an $\ell$-extendable path of~$H_{t-1,\mu_{t-1}}$.
    Then the distance between~$a$ and~$a_{t,2}$ in~$H_{t-1,\mu_{t-1}}$ is less than~$\ell-2$, so ${\{a,b\}\subseteq B_\mathcal{H}(a_{t,2},\ell-1)}$.

    \medskip
    We conclude that there is $x\in\Gamma_{t,\mu_t}$ such that $\{a,b\}\subseteq B_\mathcal{H}(x,\ell-1)$.
\end{proof}

The next lemma considers an $\ell$-coarse ear-decomposition having no $\ell$-extendable path.

\begin{lemma}\label{lem:no edge2}
    Let~$G$ be a graph which has no $\ell$-cycle of length at most~$2\ell$ and let ${\mathcal{H}\coloneqq\bigcup_{i\in[t]}\bigcup_{j\in[\mu_i]}P_{i,j}}$ be an $\ell$-coarse ear-decomposition in~$G$ such that ${t\geq2}$ and ${G-Z_{t,\mu_t}}$ has no $\ell$-extendable path of~$\mathcal{H}$.
    If~$\mathcal{H}$ has a chord~$ab$ in~$G$ with $a\in V(H_{t-1,\mu_{t-1}})\setminus\Gamma_{t,1}$ and $b\in V(P_{t,1})\setminus\Gamma_{t,1}$, then there is $x\in\bigcup_{i=1}^{\min\{2,\mu_t\}}\Gamma_{t,i}$ such that $\{a,b\}\subseteq B_\mathcal{H}(x,\ell-1)$.
\end{lemma}
\begin{proof}
    Note that~$a$ is an admissible vertex of~$H_{t-1,\mu_{t-1}}$, because otherwise $b\in Z_{t-1,\mu_{t-1}}$.

    We divide into two cases depending on the type of~$P_{t,1}$.

    \medskip
    \noindent\textbf{Case 1: $P_{t,1}$ is of type~1.}

    Let~$P$ be the path obtained by concatenating~$ab$ and a shortest subpath of~$P_{t,1}$ between~$b$ and~$c_t$.
    Note that~$P$ is an $H_{t-1,\mu_{t-1}}$-path.

    We show that $\{a,b\}\subseteq B_\mathcal{H}(c_t,\ell-1)$.
    Suppose not.
    Then one of $\dist_\mathcal{H}(a,c_t)$ and $\dist_\mathcal{H}(b,c_t)$ is at least~$\ell$.
    Hence,
    \begin{align*}
        \abs{E(P)}+\dist_{H_{t-1,\mu_{t-1}}}(a,c_t)
        &=1+\dist_{P_{t,1}}(b,c_t)+\dist_{H_{t-1,\mu_{t-1}}}(a,c_t)\\
        &\geq1+\dist_\mathcal{H}(b,c_t)+\dist_\mathcal{H}(a,c_t)\geq\ell+2,
    \end{align*}
    so that~$P$ is an $\ell$-extendable path of~$H_{t-1,\mu_{t-1}}$, contradicting~\ref{cond:coarse3}.
    Thus, ${\{a,b\}\subseteq B_\mathcal{H}(c_t,\ell-1)}$.

    \medskip
    \noindent\textbf{Case 2: $P_{t,1}$ is of type~2.}

    Then $P_{t,1}$ has no branch vertex of~$H_{t,1}$, so that~$b$ is an admissible vertex of~$H_{t,1}$.
    Since~$a$ and~$b$ are in distinct components of~$H_{t,1}$, the edge~$ab$ forms an $\ell$-extendable path of~$H_{t,1}$.
    Since ${G-Z_{t,\mu_t}}$ has no $\ell$-extendable path of~$\mathcal{H}$, we have $\mu_t\geq2$.
    By~\ref{cond:coarse2}, $P_{t,2}$ is of length~$1$.
    By Lemma~\ref{lem:lengthy}, one of~$a_{t,2}$ and~$b_{t,2}$ is in~$V(H_{t-1,\mu_{t-1}})$ and the other is in~$V(P_{t,1})$.
    By symmetry, we may assume that ${a_{t,2}\in V(H_{t-1,\mu_{t-1}})}$ and ${b_{t,2}\in V(P_{t,1})}$.
    Since $ab\notin E(\mathcal{H})$, $\{a,b\}\neq\{a_{t,2},b_{t,2}\}$.

    We first assume that ${a=a_{t,2}}$.
    Let~$P'_{t,1}$ be the cycle obtained by concatenating~$bab_{t,2}$ and a shortest path of~$P_{t,1}$ between~$b$ and~$b_{t,2}$.
    Note that~$P'_{t,1}$ is a cycle in $G-Z_{t-1,\mu_{t-1}}$ intersecting~$V(H_{t-1,\mu_{t-1}})$ only at~$a$.
    Since~$P_{t,1}$ is of type~2, by~\ref{cond:coarse1}, $P'_{t,1}$ is not an $\ell$-cycle.
    This implies that $\dist_{P_{t,1}}(b,b_{t,2})<\ell-2$, and therefore, $\{a,b\}\subseteq B_\mathcal{H}(b_{t,2},\ell-1)$.

    We now assume that ${a\neq a_{t,2}}$.
    Let~$P$ be the path obtained by concatenating~$ab$, a shortest subpath of~$P_{t,1}$ between~$b$ and~$b_{t,2}$, and~$P_{t,2}$.
    By~\ref{cond:coarse3}, $G-Z_{t-1,\mu_{t-1}}$ has no $\ell$-extendable path of~$H_{t-1,\mu_{t-1}}$.
    Thus,~$P$ is not an $\ell$-extendable path of~$H_{t-1,\mu_{t-1}}$.
    This implies that
    \begin{align*}
        \ell-1
        &\geq\abs{E(P)}+\dist_{H_{t-1,\mu_{t-1}}}(a,a_{t,2})\\
        &=2+\dist_{P_{t,1}}(b,b_{t,2})+\dist_{H_{t-1,\mu_{t-1}}}(a,a_{t,2})\\
        &\geq2+\dist_\mathcal{H}(b,b_{t,2})+\dist_\mathcal{H}(a,a_{t,2}).
    \end{align*}
    Since~$a_{t,2}$ and~$b_{t,2}$ are adjacent, for any $x\in\Gamma_{t,2}$, $\{a,b\}\subseteq B_\mathcal{H}(x,\ell-1)$.
    
    \medskip
    We conclude that there is $x\in\Gamma_{t,1}\cup\Gamma_{t,2}$ such that $\{a,b\}\subseteq B_\mathcal{H}(x,\ell-1)$.
\end{proof}

We now prove Proposition~\ref{prop:induced}.

\begin{proof}[Proof of Proposition~\ref{prop:induced}]
    Let $(p,q)$ and~$(r,s)$ be the lexicographically smallest pairs of~$\mathcal{P}(\mathcal{H})$ such that ${a\in V(H_{p,q})}$ and ${b\in V(H_{r,s})}$.
    By symmetry, we may assume that $(p,q)\leq_L(r,s)$.

    We consider the following three cases.

    \medskip
    \noindent\textbf{Case 1: $a\in V(P_{r,s})$ and $s=1$.}

    Note that~$ab$ is a chord of~$P_{r,1}$ in~$G$.
    Since~$G$ has no $\ell$-cycle of length at most~$2\ell$, by Observation~\ref{obs:structure}\ref{item:type2}, $P_{r,1}$ is of type~1.
    Let~$P'_{r,1}$ be the cycle obtained by concatenating~$ab$ and a subpath of~$P_{r,1}$ between~$a$ and~$b$ which passes through~$c_p$.
    Note that~$P'_{r,1}$ is a cycle in~$G-Z_{r-1,\mu_{r-1}}$ intersecting~$V(H_{r-1,\mu_{r-1}})$ only at~$c_r$.
    In addition, it is shorter than~$P_{r,1}$, because~$ab$ is a chord of~$P_{r,1}$.
    Thus, by~\ref{cond:coarse1-1}, $P'_{r,1}$ is not an $\ell$-cycle.
    This implies that $\{a,b\}\subseteq B_\mathcal{H}(c_r,\ell-1)$.
    
    \medskip
    \noindent\textbf{Case 2: $a\in V(P_{r,s})$ and $s\geq2$.}

    Note that if $(p,q)<_L(r,s)$, then $a\in\Gamma_{r,s}$, and otherwise~$a$ is an internal vertex of~$P_{r,s}$.
    By symmetry, we may assume that~$a_{r,s}P_{r,s}a$ does not contain~$b$.
    Let~$P$ be the path obtained by concatenating~$a_{r,s}P_{r,s}a$, $ab$, and~$bP_{r,s}b_{r,s}$.
    Note that~$P$ is an $H_{r,s-1}$-path.
    In addition, it is shorter than~$P_{r,s}$, because~$ab$ is a chord of~$P_{r,s}$.
    By~\ref{cond:coarse2}, we have
    \[
        \abs{E(P)}+\dist_{H_{r,s-1}}(a_{r,s},b_{r,s})\leq\ell-1,
    \]
    that is, $\abs{E(P)}\leq\ell-2$.
    Hence, for any $x\in\Gamma_{r,s}$, $\{a,b\}\subseteq B_\mathcal{H}(x,\ell-1)$.

    \medskip
    \noindent\textbf{Case 3: $a\notin V(P_{r,s})$.}

    Note that $(p,q)<_L(r,s)$.
    Since~$(r,s)$ is the lexicographically smallest pair of~$\mathcal{P}(\mathcal{H})$ such that ${b\in V(H_{r,s})}$, we have that $b\in V(P_{r,s})\setminus\Gamma_{r,s}$.

    If $s\geq2$, then by Lemma~\ref{lem:no edge1} for $H_{r,s}$, there is $x\in\Gamma_{r,s}$ such that $\{a,b\}\subseteq B_\mathcal{H}(x,\ell-1)$.
    Thus, we may assume that~${s=1}$.
    Since $(p,q)<_L(r,s)$, we have that~${r\geq2}$.
    By the maximality of~$\mathcal{H}$, ${G-Z_{r,\mu_r}}$ has no $\ell$-extendable path of~$H_{r,\mu_r}$.
    Thus, by Lemma~\ref{lem:no edge2} for~$G$ and~$H_{r,\mu_r}$, there is $x\in\bigcup_{i=1}^{\min\{2,\mu_r\}}\Gamma_{r,i}$ such that $\{a,b\}\subseteq B_{H_{r,\mu_r}}(x,\ell-1)\subseteq B_\mathcal{H}(x,\ell-1)$.

    \medskip
    We conclude that~$\mathcal{H}$ has a branch vertex~$x$ such that $\{a,b\}\subseteq B_\mathcal{H}(x,\ell-1)$.
\end{proof}

Proposition~\ref{prop:induced} has the following corollary, which will be used to prove Theorem~\ref{thm:main1}.
We remark that if a graph~$G$ has no $\ell$-cycle of length at most~$2\ell$, then for an $\ell$-coarse ear-decomposition~$\mathcal{H}$ in~$G$ and branch vertices~$x,x'$ of~$\mathcal{H}$ with $\dist_\mathcal{H}(x,x')\leq\ell$, there is a unique shortest path of~$\mathcal{H}$ between~$x$ and~$x'$, because otherwise~$\mathcal{H}$ has a cycle of length at most~$2\ell$ which is an $\ell$-cycle in~$G$ by Lemma~\ref{lem:ell cycle}.

\begin{corollary}\label{cor:induced}
    Let~$G$ be a graph which has no $\ell$-cycle of length at most~$2\ell$, let ${\mathcal{H}\coloneqq\bigcup_{i\in[t]}\bigcup_{j\in[\mu_i]}P_{i,j}}$ be a maximal $\ell$-coarse ear-decomposition in~$G$, and let~$\mathcal{C}$ be an induced packing of cycles in~$\mathcal{H}$.
    If~$\mathcal{H}$ has a chord in~$G$ between distinct cycles~$C$ and~$C'$ in~$\mathcal{C}$, then~$\mathcal{H}$ has distinct branch vertices~$x$ and~$x'$ such that $\dist_\mathcal{H}(x,x')\leq\ell-1$ and for the shortest path~$Q$ of~$\mathcal{H}$ between~$x$ and~$x'$, $B_{\mathcal{H}-E(Q)}(x,\ell-1)\subseteq V(C)$ and $B_{\mathcal{H}-E(Q)}(x',\ell-1)\subseteq V(C')$.
\end{corollary}
\begin{proof}
    By Proposition~\ref{prop:induced}, $\mathcal{H}$ has a branch vertex~$x$ such that both ends of the chord are in~${B_\mathcal{H}(x,\ell-1)}$.
    If~$\mathcal{H}$ has no other branch vertex contained in $B_\mathcal{H}(x,\ell-1)$, then both~$C$ and~$C'$ must contain~$x$, contradicting the fact that~$C$ and~$C'$ are vertex-disjoint.
    Hence,~$\mathcal{H}$ has another branch vertex~$x'$ contained in $B_\mathcal{H}(x,\ell-1)$.

    Since $\dist_\mathcal{H}(x,x')\leq\ell-1$, by Corollary~\ref{cor:close ends}, there is a unique $(p,q)\in\mathcal{P}(\mathcal{H})$ such that $\{x,x'\}=\{a_{p,q},b_{p,q}\}$.
    Then~$x$ and~$x'$ are the unique branch vertices of~$\mathcal{H}$ in ${B_\mathcal{H}(\Gamma_{p,q},\ell-1)}$.
    This implies that~$x$ is a vertex of one of~$C$ and~$C'$, and~$x'$ is a vertex of the other.
    By symmetry, we may assume that $x\in V(C)$ and $x'\in V(C')$.

    Since~$Q$ is of length at most~$\ell-1$, every internal vertex of~$Q$ has degree~$2$ in~$\mathcal{H}$.
    Since $x\in V(C)$ and $x'\in V(C')$, no internal vertex of~$Q$ is contained in $V(C\cup C')$.
    Therefore, we have $B_{\mathcal{H}-E(Q)}(x,\ell-1)\subseteq V(C)$ and $B_{\mathcal{H}-E(Q)}(x',\ell-1)\subseteq V(C')$.
\end{proof}

\section{A proof of Theorem~\ref{thm:main1}}
\label{sec:maintheorem}

In this section, we prove Theorem~\ref{thm:main1}.

\mainone*

To prove this, we construct two auxiliary graphs of an $\ell$-coarse ear-decomposition, and derive their properties.
The first one is defined as follows.

For an $\ell$-coarse ear-decomposition $\mathcal{H}\coloneqq\bigcup_{i\in[t]}\bigcup_{j\in[\mu_i]}P_{i,j}$ in a graph~$G$, let~$O_\mathcal{H}$ be the graph with vertex set ${\{w_i \colon i\in[t]\}}$ such that distinct~$w_i$ and~$w_j$ are adjacent in~$O_\mathcal{H}$ if and only if the distance between~$V(P_{i,1})$ and~$V(P_{j,1})$ in~$\mathcal{H}$ is at most~$\ell-1$.

We show that $O_\mathcal{H}$ is a forest when~$G$ has no $\ell$-cycle of length at most~$2\ell$, and from this, derive that if~$t$ is large, then~$G$ has a large induced packing of $\ell$-cycles.

\begin{lemma}\label{lem:packing1}
    Let~$G$ be a graph which has no $\ell$-cycle of length at most~$2\ell$ and let ${\mathcal{H}\coloneqq\bigcup_{i\in[t]}\bigcup_{j\in[\mu_i]}P_{i,j}}$ be a maximal $\ell$-coarse ear-decomposition in~$G$.
    Then~$O_\mathcal{H}$ is a forest.
    In addition, if~${t\geq2k-1}$, then~$G$ has an induced packing of~$k$ $\ell$-cycles.
\end{lemma}
\begin{proof}
    To show that $O_\mathcal{H}$ is a forest, it suffices to show that for each ${i\in[t]}$, $w_i$ has at most one neighbour in ${\{w_j \colon j\in[i-1]\}}$.
    Towards a contradiction, suppose that there exists ${p\in[t]}$ such that~$w_p$ has distinct neighbours $w_{p_1}$ and $w_{p_2}$ in~$O_\mathcal{H}$ for some $p_1,p_2\in[p-1]$.

    \begin{claim}\label{clm:type2}
        $P_{p,1}$ is of type~2.
    \end{claim}
    \begin{subproof}
        Since $V(P_{p,1})$ intersects at most one of~$V(P_{p_1,1})$ and~$V(P_{p_2,1})$, without loss of generality, we may assume that~$P_{p_1,1}$ and~$P_{p,1}$ are vertex-disjoint.
        Since~$w_p$ is adjacent to~$w_{p_1}$ in~$O_\mathcal{H}$, $\mathcal{H}$ has a $(P_{p_1,1},P_{p,1})$-path~$Q$ of length at most~$\ell-1$ between~$a\in V(P_{p_1,1})$ and~$b\in V(P_{p,1})$.
        Note that~$a$ is an admissible vertex of~$H_{p-1,\mu_{p-1}}$, because otherwise $b\in Z_{p-1,\mu_{p-1}}$.
        Since~$Q$ and $P_{p_1,1}\cup P_{p,1}$ are edge-disjoint, both~$a$ and~$b$ are branch vertices of~$\mathcal{H}$.
        Then by Corollary~\ref{cor:close ends}, there is a unique $(i,j)\in\mathcal{P}(\mathcal{H})$ such that $\{a,b\}=\{a_{i,j},b_{i,j}\}$.
        By Lemma~\ref{lem:unique}, both~$a$ and~$b$ have degree~$3$ in~$\mathcal{H}$.
        This implies that $(i,j)>_L(p,1)$.

        Since~$Q$ is of length at most~$\ell-1$, if $Q=P_{i,j}$, then~$P_{p,1}$ is of type~2 by Lemma~\ref{lem:lengthy} for~$H_{i,j-1}$ and~$P_{i,j}$.
        Thus, we may assume that $Q\neq P_{i,j}$.
        Then~$P_{i,j}$ has an internal vertex~$c$ such that $aP_{i,j}c=aQc$ and~$c$ is incident with edges $e\in E(P_{i,j})\setminus E(Q)$ and $e'\in E(Q)\setminus E(P_{i,j})$.
        Let $(i',j')$ be the unique pair in~$\mathcal{P}(\mathcal{H})$ such that $e'\in E(P_{i',j'})$.
        Since~$c$ is an internal vertex of~$P_{i,j}$, we deduce that $(i',j')>_L(i,j)$.
        Since~$Q$ is of length at most~$\ell-1$, the distance between~$a$ and~$c$ in~$P_{i,j}$ is at most~$\ell-2$.
        Therefore, one of the ends of~$e'$ is contained in~$Z_{i,j}$, a contradiction, and this proves the claim.
    \end{subproof}

    By Claim~\ref{clm:type2}, $V(P_{p,1})$ is disjoint from $V(P_{p_1,1})\cup V(P_{p_2,1})$.
    Thus, for each $i\in[2]$, $\mathcal{H}$ has a $(P_{p_i,1},P_{p,1})$-path~$Q_i$ of length at most~$\ell-1$ between~$a_i\in V(P_{p_i,1})$ and~$b_i\in V(P_{p,1})$.
    Recall that both~$a_1$ and~$a_2$ are admissible vertices of~$H_{p-1,\mu_{p-1}}$, and that there are unique $(i_1,j_1),(i_2,j_2)\in\mathcal{P}(\mathcal{H})$ such that $\{a_1,b_1\}=\{a_{i_1,j_1},b_{i_1,j_1}\}$ and $\{a_2,b_2\}=\{a_{i_2,j_2},b_{i_2,j_2}\}$.
    Since~$P_{p_1,1}$ and~$P_{p_2,1}$ are vertex-disjoint, $(i_1,j_1)$ and~$(i_2,j_2)$ are distinct.
    By symmetry, we may assume that $(i_1,j_1)<_L(i_2,j_2)$.
    Similar to the proof of Claim~\ref{clm:type2}, we deduce that ${Q_1=P_{i_1,j_1}}$ and ${Q_2=P_{i_2,j_2}}$.
    Since both~$Q_1$ and~$Q_2$ are of length at most~$\ell-1$, they are vertex-disjoint, because otherwise $Z_{i_1,j_1}\cap V(P_{i_2,j_2})\neq\emptyset$.
    
    Let~$P$ be a path obtained by concatenating~$Q_1$, $Q_2$, and any subpath of~$P_{p,1}$ between~$b_1$ and~$b_2$.
    Note that~$P$ is an $H_{p-1,\mu_{p-1}}$-path in ${G-Z_{p-1,\mu_{p-1}}}$.
    Since $(i_1,j_1)\neq (i_2,j_2)$, by Corollary~\ref{cor:close ends}, $\dist_\mathcal{H}(b_1,b_2)>\ell-1$.
    Thus, $\abs{E(P)}>\ell+1$, and therefore~$P$ is an $\ell$-extendable path of~$H_{p-1,\mu_{p-1}}$, contradicting~\ref{cond:coarse3}.
    Therefore, $O_\mathcal{H}$ is a forest.

    We now suppose that~${t\geq2k-1}$.
    Since $O_\mathcal{H}$ is a forest, $O_\mathcal{H}$ has an independent set~$I$ of size~$k$.
    By the definition of~$O_\mathcal{H}$, $\{P_{i,1} \colon w_i\in I\}$ is an induced packing of~$k$ $\ell$-cycles in~$\mathcal{H}$.
    
    We show that $\{P_{i,1} \colon w_i\in I\}$ is an induced packing of cycles in~$G$.
    Suppose not.
    Then~$\mathcal{H}$ has a chord in~$G$ between $P_{i,1}$ and~$P_{i',1}$ for some distinct $w_i,w_{i'}\in I$.
    By Corollary~\ref{cor:induced}, $\mathcal{H}$ has distinct branch vertices~$x$ and~$x'$ such that ${\dist_\mathcal{H}(x,x')\leq\ell-1}$ and for the shortest path~$Q$ of~$\mathcal{H}$ between~$x$ and~$x'$, $B_{\mathcal{H}-E(Q)}(x,\ell-1)\subseteq V(P_{i,1})$ and $B_{\mathcal{H}-E(Q)}(x',\ell-1)\subseteq V(P_{i',1})$.
    Then $\dist_\mathcal{H}(V(P_{i,1}),V(P_{i',1}))\leq\dist_\mathcal{H}(x,x')\leq\ell-1$, contradicting that~$w_i$ and~$w_{i'}$ are not adjacent in~$O_\mathcal{H}$.
    Hence, $\{P_{i,1} \colon w_i\in I\}$ is an induced packing of~$k$ $\ell$-cycles in~$G$.
\end{proof}

We now construct the second auxiliary graph.
Let~$\mathcal{P}'(\mathcal{H})$ be the set of pairs $(i,j)\in\mathcal{P}(\mathcal{H})$ such that $j\geq2$ and $\dist_{H_{i,j-1}}(a_{i,j},b_{i,j})\leq\ell-1$.
Let~$U_\mathcal{H}$ be the graph with vertex set ${\{u_{i,j}:(i,j)\in\mathcal{P}'(\mathcal{H})\}}$ such that~$u_{i,j}$ and~$u_{i',j'}$ with $(i,j)>_L(i',j')$ are adjacent in~$U_\mathcal{H}$ if and only if $\Gamma_{i,j}\subseteq V(P_{i',j'})$ and $\dist_{P_{i',j'}}(a_{i,j},b_{i,j})\leq\ell-1$.
Note that each vertex $u_{i,j}$ of~$U_\mathcal{H}$ is adjacent to at most one vertex $u_{i',j'}$ with~$(i',j')<_L(i,j)$, and therefore $U_\mathcal{H}$ is a forest.

We show that if~$U_\mathcal{H}$ has many vertices, then~$\mathcal{H}$ has a large induced packing of $\ell$-cycles.

\begin{lemma}\label{lem:packing2}
    Let~$G$ be a graph which has no $\ell$-cycle of length at most~$2\ell$ and let ${\mathcal{H}\coloneqq\bigcup_{i\in[t]}\bigcup_{j\in[\mu_i]}P_{i,j}}$ be a maximal $\ell$-coarse ear-decomposition in~$G$.
    If~$U_\mathcal{H}$ has at least $2k-1$ vertices, then~$G$ has an induced packing of~$k$ $\ell$-cycles.
\end{lemma}
\begin{proof}
    For each $(i,j)\in\mathcal{P}'(\mathcal{H})$, let~$C_{i,j}$ be the cycle obtained by concatenating~$P_{i,j}$ and a shortest path of~$H_{i,j-1}$ between~$a_{i,j}$ and~$b_{i,j}$.
    Since $U_\mathcal{H}$ is a forest, if $U_\mathcal{H}$ has at least~$2k-1$ vertices, then $U_\mathcal{H}$ has an independent set~$I$ of size~$k$.
    Let $\mathcal{C}\coloneqq\{C_{i,j} \colon u_{i,j}\in I\}$.

    We first show that the cycles in~$\mathcal{C}$ are vertex-disjoint.
    Suppose not.
    Then~$\mathcal{C}$ contains cycles~$C_{i,j}$ and~$C_{i',j'}$ such that $(i,j)>_L(i',j')$ and $V(C_{i,j})\cap V(C_{i',j'})\neq\emptyset$.
    Let
    \begin{align*}
        Q_{i,j}&\coloneqq C_{i,j}-(V(P_{i,j})\setminus\Gamma_{i,j}),\\
        Q_{i',j'}&\coloneqq C_{i',j'}-(V(P_{i',j'})\setminus\Gamma_{i',j'}),
    \end{align*}
    and let~$z$ be a vertex in $V(C_{i,j})\cap V(C_{i',j'})$.
    Since $(i,j)>_L(i',j')$, no vertex in $V(P_{i,j})\setminus\Gamma_{i,j}$ is in~$V(C_{i',j'})$, and therefore~$z$ is a vertex of~$Q_{i,j}$.
    Since the length of~$Q_{i,j}$ is at most~$\ell-1$, $z$ is an admissible vertex of~$H_{i,j-1}$, because otherwise $\Gamma_{i,j}\subseteq Z_{i,j-1}$.
    This implies that $z\notin V(Q_{i',j'})$, as $V(Q_{i',j'})\subseteq B_{H_{i,j-1}}(\Gamma_{i',j'},\ell-1)$.
    Therefore, $z\in V(P_{i',j'})\setminus\Gamma_{i',j'}$.
    Since~$z$ is an admissible vertex of~$H_{i,j-1}$, $B_{H_{i,j-1}}(z,\ell-1)$ induces a path in~$H_{i,j-1}$.
    Therefore, both~$a_i$ and~$b_i$ are vertices of~$P_{i',j'}$, contradicting~$u_{i,j}$ and~$u_{i',j'}$ are not adjacent in~$U_\mathcal{H}$.
    Hence, the cycles in~$\mathcal{C}$ are vertex-disjoint.
    
    We now show that~$\mathcal{C}$ is an induced packing of cycles in~$G$.
    Suppose not.
    Since the cycles in~$\mathcal{C}$ are vertex-disjoint, $\mathcal{H}$ has a chord in~$G$ between distinct cycles~$C_{i,j}$ and~$C_{i',j'}$ in~$\mathcal{C}$.
    By Corollary~\ref{cor:induced}, $\mathcal{H}$ has distinct branch vertices~$x$ and~$x'$ such that ${\dist_\mathcal{H}(x,x')\leq\ell-1}$ and for the shortest path~$Q$ of~$\mathcal{H}$ between~$x$ and~$x'$, ${B_{\mathcal{H}-E(Q)}(x,\ell-1)\subseteq V(C_{i,j})}$ and ${B_{\mathcal{H}-E(Q)}(x',\ell-1)\subseteq V(C_{i',j'})}$.
    By Corollary~\ref{cor:close ends}, there is $(p,q)\in\mathcal{P}(\mathcal{H})$ such that ${\{x,x'\}=\{a_{p,q},b_{p,q}\}}$.
    In addition, $x$ and~$x'$ are the only branch vertices in $B_\mathcal{H}(\Gamma_{p,q},\ell-1)$.
    Hence, $x\in V(P_{i,j})\setminus B_\mathcal{H}(\Gamma_{i,j},\ell-1)$ and $x'\in V(P_{i',j'})\setminus B_\mathcal{H}(\Gamma_{i',j'},\ell-1)$.
    Thus, any path of~$\mathcal{H}-E(P_{p,q})$ between~$x$ and~$x'$ must contain a branch vertex of~$\mathcal{H}$, and therefore $P_{p,q}=Q$.
    This contradicts Lemma~\ref{lem:lengthy} as $j,j'\geq2$ and~$P_{p,q}$ is of length at most~$\ell-1$.
\end{proof}

We now prove Theorem~\ref{thm:main1}.

\begin{proof}[Proof of Theorem~\ref{thm:main1}]
    We proceed by induction on~$k$.
    The statement obviously holds for~${k=1}$ by taking both~$f(1,\ell)$ and~$g(1)$ as~$0$.
    Thus, we may assume that~${k\geq2}$.
    Let
    \begin{align*}
        f(k,\ell)&\coloneqq (4\ell-3)s_k+(80\ell-60)(k-1)=\mathcal{O}(\ell k\log k),\\
        g(k)&\coloneqq s_k+20(k-1)=\mathcal{O}(k\log k).
    \end{align*}
    We remark that for every integer $k\geq2$, $f(k-1,\ell)+2\ell\leq f(k,\ell)$ and $g(k-1)+2\leq g(k)$.
    
    If~$G$ has no $\ell$-cycle, then the statement clearly holds by taking both~$X_1$ and~$X_2$ as empty sets.
    Thus, we may assume that~$G$ has at least one $\ell$-cycle.
    
    Suppose that~$G$ has an $\ell$-cycle~$C$ of length at most~$2\ell$.
    By the inductive hypothesis on ${G'\coloneqq G-B_G(C,1)}$ with~${k-1}$, ${G-B_G(C,1)}$ contains either an induced packing~$\mathcal{C}$ of~$k-1$ $\ell$-cycles or sets~$X'_1$ and~$X'_2$ of vertices with ${\abs{X'_1} \leq f(k-1,\ell)}$ and ${\abs{X'_2} \leq g(k-1)}$ such that neither~${G'-B_{G'}(X'_1,1)}$ nor~${G'-B_{G'}(X'_2,\ell)}$ has an $\ell$-cycle.
    In the former, $\mathcal{C}\cup\{C\}$ is an induced packing of~$k$ $\ell$-cycles in~$G$.
    Thus, we may assume the latter case.
    Since~$C$ is of length at most~$2\ell$, there are two vertices~$v$ and~$v'$ of~$C$ such that $V(C)\subseteq B_G(\{v,v'\},\ell-1)$ so that $B_G(C,1)\subseteq B_G(\{v,v'\},\ell)$.
    Then $X'_1\cup V(C)$ and $X'_2\cup\{v,v'\}$ are desired sets as 
    \begin{align*}
        \abs{X'_1\cup V(C)}&\leq f(k-1,\ell)+2\ell\leq f(k,\ell)\\
        \abs{X'_2\cup\{v\}}&\leq g(k-1)+2\leq g(k),
    \end{align*}
    so the statement holds.
    Hence, we may assume that~$G$ has no $\ell$-cycle of length at most~$2\ell$.
    
    Let $\mathcal{H}\coloneqq\bigcup_{i\in[t]}\bigcup_{j\in[\mu_i]}P_{i,j}$ be a maximal $\ell$-coarse ear-decomposition in~$G$, let~$\beta$ be the number of branch vertices of~$\mathcal{H}$, and let~$\mathcal{P}'(\mathcal{H})$ be the set of pairs $(i,j)\in\mathcal{P}(\mathcal{H})$ such that $j\geq2$ and ${\dist_{H_{i,j-1}}(a_{i,j},b_{i,j})\leq\ell-1}$.
    Since~$G$ has no $\ell$-cycle of length at most~$2\ell$, if ${t\geq2k-1}$ or ${\abs{\mathcal{P}'(\mathcal{H})}\geq2k-1}$, then by Lemmas~\ref{lem:packing1} or~\ref{lem:packing2}, $G$ has an induced packing of~$k$ $\ell$-cycles.
    Thus, we may assume that ${t\leq2k-2}$ and~${\abs{\mathcal{P}'(\mathcal{H})} \leq 2k-2}$.

    Let~$\mathcal{H}'$ be the graph obtained from~$\mathcal{H}$ as follows: remove every vertex~$v$ such that either 
    \begin{itemize}
        \item $\deg_\mathcal{H}(v)=4$, or
        \item $v=a_{i,j}$ for some ${(i,j)\in\{(x,2) \colon x\in[t],\abs{E(P_{x,2})}\leq\ell-1\}}$, or 
        \item $v=a_{i,j}$ for some $(i,j)\in\mathcal{P}'(\mathcal{H})$, 
    \end{itemize} 
    and then recursively remove vertices of degree at most~$1$.
    Note that every vertex of~$\mathcal{H}'$ has degree~$2$ or~$3$ in~$\mathcal{H}'$.
    We remark that throughout the process, if we remove a degree-$4$ vertex~$v$ from~$\mathcal{H}$, then~$\mathcal{H}'$ loses at most five branch vertices of~$\mathcal{H}$ including~$v$, and if we remove~$a_{i,j}$ for ${(i,j)\in\mathcal{P}'(\mathcal{H})\cup\{(x,2) \colon x\in[t], \abs{E(P_{x,2})} \leq\ell-1\}}$ from~$\mathcal{H}$, then~$\mathcal{H}'$ loses at most four branch vertices of~$\mathcal{H}$ including~$a_{i,j}$.
    Also, note that if $\deg_{\mathcal{H}}(v)=4$, then $v=c_x$ for some ${x\in[t]}$ and~$P_{x,1}$ is of type~1.
    In this case, by Lemma~\ref{lem:lengthy}, we have that ${\abs{E(P_{x,2})} > \ell-1}$.
    Thus, for the number~$m$ of degree-$4$ vertices of~$\mathcal{H}$, there are at most $t-m$ integers in~$[t]$ such that $\abs{E(P_{x,2})}\leq\ell-1$.
    Since ${t\leq2k-2}$ and~${\abs{\mathcal{P}'(\mathcal{H})} \leq 2k-2}$, the number of branch vertices of~$\mathcal{H}'$ is at least 
    \[
        \beta-5m-4(2k-2-m)-4(2k-2)=\beta-16(k-1)-m\geq\beta-18(k-1).
    \]

    We divide into two cases depending on the number of branch vertices of~$\mathcal{H}'$.
    
    \medskip
    \noindent\textbf{Case 1: $\mathcal{H}'$ has at least~$s_k$ branch vertices.}
    
    By Theorem~\ref{thm:simonovitz}, there are~$k$ vertex-disjoint cycles $C_1,\ldots,C_k$ in~$\mathcal{H}'$.
    Since~$\mathcal{H}'$ is a subgraph of~$\mathcal{H}$, every cycle in~$\mathcal{L}\coloneqq\{C_1,\ldots,C_k\}$ is an $\ell$-cycle in~$G$ by Lemma~\ref{lem:ell cycle}.
    We are going to show that $\mathcal{L}$ is an induced packing of cycles in~$G$.

    We first show that $\mathcal{L}$ is an induced packing of cycles in~$\mathcal{H}'$.
    Suppose not.
    Then~$\mathcal{H}$ has an edge~$ab$ between distinct cycles~$C_i$ and~$C_j$ in~$\mathcal{L}$.
    By the construction of~$\mathcal{H}'$, both~$a$ and~$b$ have degree~$3$ in~$\mathcal{H}$.
    By Corollary~\ref{cor:close ends}, there exists a unique $(p,q)\in\mathcal{P}(\mathcal{H})$ such that ${\{a,b\}=\{a_{p,q},b_{p,q}\}}$.
    Since~$P_{p,q}$ is of length~$1$, by Lemma~\ref{lem:lengthy} for~$H_{p,q-1}$ and~$P_{p,q}$, we have ${q=2}$.
    Then~$a_{p,q}$ was deleted when we constructed $\mathcal{H}'$ from $\mathcal{H}$, a contradiction.
    Hence, $\mathcal{L}$ is an induced packing of cycles in~$\mathcal{H}'$.
    
    We now show that~$\mathcal{L}$ is an induced packing of cycles in~$G$.
    Suppose not.
    Since~$\mathcal{L}$ is an induced packing of cycles in~$\mathcal{H}'$, there is a chord of~$\mathcal{H}'$ in~$G$ between distinct cycles~$C_i$ and~$C_j$ in~$\mathcal{L}$.
    Since~$\mathcal{H}'$ is an induced subgraph of~$\mathcal{H}$, the chord is also a chord of~$\mathcal{H}$ in~$G$.
    By Corollary~\ref{cor:induced}, $\mathcal{H}$ has distinct branch vertices~$x$ and~$x'$ such that ${\dist_\mathcal{H}(x,x')\leq\ell-1}$ and for the shortest path~$Q$ of~$\mathcal{H}$ between~$x$ and~$x'$, ${B_{\mathcal{H}-E(Q)}(x,\ell-1)\subseteq V(C_i)}$ and ${B_{\mathcal{H}-E(Q)}(x',\ell-1)\subseteq V(C_j)}$.
    By Corollary~\ref{cor:close ends}, there is $(p,q)\in\mathcal{P}(\mathcal{H})$ such that ${\{x,x'\}=\{a_{p,q},b_{p,q}\}}$.

    Since $(p,q)\notin\mathcal{P}'(\mathcal{H})$, the distance between~$x$ and~$x'$ in~$H_{p,q-1}$ is longer than~$\ell-1$.
    By Corollary~\ref{cor:close ends}, $x$ and~$x'$ are the only branch vertices in $B_\mathcal{H}(\Gamma_{p,q},\ell-1)$, and therefore~$P_{p,q}$ is the only path of~$\mathcal{H}$ between~$x$ and~$x'$ which has no branch vertex of~$\mathcal{H}$ as an internal vertex.
    This implies that~$P_{p,q}$ is the unique path of~$\mathcal{H}$ between~$x$ and~$x'$ whose length is at most~$\ell-1$.
    By Lemma~\ref{lem:lengthy} for~$H_{p,q-1}$ and~$P_{p,q}$, we have $q=2$.
    Then~$a_{p,q}$ was deleted when we constructed $\mathcal{H}'$ from $\mathcal{H}$, a contradiction.
    
    Hence, $\mathcal{L}$ is an induced packing of $k$ $\ell$-cycles in~$\mathcal{H}'$.
        
    \medskip
    \noindent\textbf{Case 2: $\mathcal{H}'$ has less than~$s_k$ branch vertices.}
    
    Then~$\mathcal{H}$ has less than $s_k+18(k-1)$ branch vertices.
    Let~$X_2$ be the set obtained from the set of branch vertices of~$\mathcal{H}$ by adding one arbitrary vertex from each component of~$\mathcal{H}$ which is a cycle, and let $X_1\coloneqq B_\mathcal{H}(X_2,\ell-1)$.
    Since~$\mathcal{H}$ has at most~$t$ components, we have
    \[
        \abs{X_2}<s_k+18(k-1)+t\leq s_k+20(k-1)=g(k).
    \]    
    For each vertex~$v$ of~$\mathcal{H}$, $B_\mathcal{H}(v,\ell-1)$ contains at most $4(\ell-1)+1$ vertices.
    Thus,
    \[
        \abs{X_1}\leq(4\ell-3)\abs{X_2}<(4\ell-3)s_k+(80\ell-60)(k-1)=f(k,\ell).
    \]
    Note that every component of $\mathcal{H}-X_2$ has maximum degree at most~$2$ and is not a cycle.
    Thus, the components of $\mathcal{H}-X_2$, say $H_1,\ldots,H_x$, are paths.

    We show that neither ${G-B_G(X_1,1)}$ nor ${G-B_G(X_2,\ell)}$ has an $\ell$-cycle.
    Since~$B_G(X_1,1)$ is a subset of~$B_G(X_2,\ell)$, it suffices to show that ${G'\coloneqq G-B_G(X_1,1)}$ has no $\ell$-cycle.
    Towards a contradiction, suppose that~$G'$ has an $\ell$-cycle.
    Since $Z_{t,\mu_t}\subseteq B_G(X_1,1)$, by the maximality of~$\mathcal{H}$, every $\ell$-cycle in~$G'$ intersects at least one of~$H_1,\ldots,H_x$.
    
    We first show that every $\ell$-cycle in~$G'$ intersects exactly one of~$H_1,\ldots,H_x$.
    It suffices to show that every component of~$G'$ contains at most one of~$H_1,\ldots,H_x$ as subgraphs.
    Suppose not.
    Then there are distinct $i,j\in[x]$ such that~$G'$ has an $(H_i,H_j)$-path~$Q$ between $q_i\in V(H_i)$ and $q_j\in V(H_j)$.
    We may assume that no internal vertex of~$Q$ is a vertex of $\mathcal{H}-X_1$, because otherwise we can take a shorter path between two of $H_1,\ldots,H_x$.
    Note that $q_j\notin V(H_i)$ as~$Q$ is an $(H_i,H_j)$-path.
    Then either~$q_i$ and~$q_j$ are in distinct components of~$\mathcal{H}$, or a shortest path of~$\mathcal{H}$ between~$q_i$ and~$q_j$ contains a branch vertex of~$\mathcal{H}$ so that ${\dist_\mathcal{H}(q_i,q_j)\geq2\ell}$.
    Therefore, $Q$ is an $\ell$-extendable path of~$\mathcal{H}$, contradicting the maximality of~$\mathcal{H}$.
    Hence, every $\ell$-cycle in~$G'$ intersects exactly one of~$H_1,\ldots,H_x$.

    Among all shortest $\ell$-cycles in~$G'$, we choose~$C$ minimising the number of its subpaths which are $\mathcal{H}$-paths.
    Note that the length of~$C$ is at least ${2\ell+1}$.
    Without loss of generality, we may assume that~$C$ intersects~$H_1\coloneqq v_1\cdots v_h$.
    Since~$H_1$ is a path, $C$ has a subpath~$Q$ which is an $\mathcal{H}$-path between distinct~$v_i$ and~$v_j$.
    Note that every path of~$\mathcal{H}$ between~$v_i$ and~$v_j$ other than~$v_iH_1v_j$ must contain a vertex in~$X_2$, so ${\dist_\mathcal{H}(v_i,v_j)\geq\min\{\abs{E(v_iH_1v_j)},2\ell\}}$.
    Then by the maximality of~$\mathcal{H}$, we deduce that ${\abs{E(Q)}+\abs{E(v_iH_1v_j)}<\ell}$.
    Since~$C$ is of length at least~$2\ell+1$, ${v_iH_1v_j}$ has a subpath~$R$ with ends~$z$ and~$z'$ which is a $C$-path.

    Since~$C$ is of length at least $2\ell+1$, it has a subpath~$D$ between~$z$ and~$z'$ of length at least~$\ell+1$.
    Let~$D'$ be the other subpath of~$C$ between~$z$ and~$z'$.
    Since~$C$ is a shortest $\ell$-cycle in~$G'$, we have ${\abs{E(D')}\leq\abs{E(R)}\leq\ell-2}$.
    If~$D'$ is shorter than~$R$, then the walk obtained from~$H_1$ by replacing~$R$ with~$D'$ contains a path of~$G'$ between~$v_1$ and~$v_h$ which is shorter than~$H_1$, contradicting~\ref{cond:coarse1} or~\ref{cond:coarse2}.
    Hence, $D'$ and~$R$ are of the same length.

    Since~$R$ is a subpath of~$H_1$, we deduce that~$D'$ has at least one subpath which is an $\mathcal{H}$-path.
    Thus, by replacing~$D'$ with~$R$, we obtain a shortest $\ell$-cycle in~$G'$ such that the number of its subpaths which are $\mathcal{H}$-paths is at least one less than that of~$C$, contradicting the choice of~$C$.
    Hence, $G'$ has no $\ell$-cycle, and therefore both~$X_1$ and~$X_2$ are desired sets.

    \medskip
    This completes the proof by induction.
\end{proof}

We now argue how to modify the proof of Theorem~\ref{thm:main1} to derive Theorem~\ref{thm:main2}.
We may assume that~$G\in\mathcal{C}$ has no $\ell$-cycle of length at most~$2\ell$.
As in the proof of Theorem~\ref{thm:main1}, we construct the same graph~$\mathcal{H}'$ from~$G$.
Since~$\mathcal{C}$ is closed under taking subgraphs, $\mathcal{H}'$ is also contained in~$\mathcal{C}$.
Recall that every cycle in~$\mathcal{H}'$ is an $\ell$-cycle in~$G$.
If~$\mathcal{H}'$ contains~$k$ vertex-disjoint cycles, then we can show that those cycles form an induced packing of $\ell$-cycles in~$G$ as we did in the proof of Theorem~\ref{thm:main1} when $\mathcal{H}'$ has at least~$s_k$ branch vertices.
If~$\mathcal{H}'$ has no~$k$ vertex-disjoint cycles, then it contains a set~$X'$ of at most~$h(k)$ vertices such that $\mathcal{H}'-X'$ is a forest.
By Lemma~\ref{lem:cubic}, $\mathcal{H}'$ contains less than $7h(k)-1$ branch vertices.
Therefore, $\mathcal{H}$ has less than ${7h(k)+18(k-1)-1}$ branch vertices.
We choose~$X_1$ and~$X_2$ the same as in the proof of Theorem~\ref{thm:main1} and then show that neither ${G-B_G(X_1,1)}$ nor ${G-B_G(X_2,\ell)}$ has an $\ell$-cycle.
Since
\begin{align*}
    \abs{X_1} 
    &<(4\ell-3)(7h(k)+18(k-1)-1) + t\\
    &\leq (28\ell-21)h(k) + (74\ell-54)k-78\ell+57,\\
    \abs{X_2}
    &<7h(k)+18(k-1)-1+t\leq7h(k)+20k-21,
\end{align*}
this completes the proof of Theorem~\ref{thm:main2}.

\section{A tighter bound for planar graphs}
\label{sec:planar}

In this section, we prove Theorem~\ref{thm:main3}.

\planar*

\begin{proof}
    We proceed by induction on~$k$. The statement clearly holds for~${k=1}$.
    Thus, we may assume that~${k\geq2}$. Suppose that $G$ has no induced packing of~$k$ cycles.
    We may assume that~$G$ contains a cycle, otherwise we could take $X\coloneqq\emptyset$.
    
    Let~$G'$ be the multigraph obtained from~$G$ by suppressing all vertices of degree at most~$2$.
    By Euler's formula, $G'$ contains a cycle~$C'$ of length at most~$5$.
    Let~$C$ be the corresponding cycle in~$G$ and let~$A$ be the set of vertices $v \in V(C)$ such that $\deg_G(v) \geq 3$.
    Since~$C'$ has length at most~$5$, we have $\abs{A}\leq 5$.  
    
    Let $G^*:=G-B_G(A,1)$ and let~$D$ be a cycle in~$G^*$.
    Suppose that $x \in V(D)$ and $y \in V(C)$ with $xy \in E(G)$.
    Since $\deg_G(v)=2$ for all $v \in V(C) \setminus A$, we must have $y \in A$.
    Thus, $x \in B_G(A,1)$, which is a contradiction.
    Therefore,~$G^*$ has no induced packing of~$k-1$ cycles, and by induction, $G^*$ has a set~$Y$ of size at most $5(k-1)$ such that $G^*-B_{G^*}(Y,1)$ has no cycles.
    Thus, $G-B_G(A\cup Y, 1)$ has no cycles and ${\abs{A\cup Y} \leq 5k}$, as required.
\end{proof}

\section{%
\texorpdfstring{Distance-$d$ packings of two cycles}%
{Distance d-packing of two cycles}}
\label{sec:distancepacking}

In this section, we prove Theorem~\ref{thm:main4}, which is an analogue of Theorem~\ref{thm:main1} for distance-$d$ packings with ${k=2}$.

\distanced*

To prove Theorem~\ref{thm:main4}, we will use two more lemmas.
In the following lemma, we show that if a graph has girth at least~${2d+2}$, then every ball of radius at most~$d$ induces a tree.

\begin{lemma}\label{lem:girth}
    Let~$d$ be a positive integer and let~$G$ be a graph of girth at least~${2d+2}$.
    Then for every vertex~$v$ of~$G$, $G[B_G(v,d)]$ is a tree.
\end{lemma}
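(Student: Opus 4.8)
The plan is to verify separately that $G[B_G(v,d)]$ is connected and that it is acyclic; a connected acyclic graph is a tree.

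\emph{Connectedness.} First I would note that if $u\in B_G(v,d)$ and $P$ is a shortest $(v,u)$-path in $G$, then every vertex $w$ of $P$ satisfies $\dist_G(v,w)\le\dist_G(v,u)\le d$, so $V(P)\subseteq B_G(v,d)$. Hence every vertex of $B_G(v,d)$ is connected to $v$ inside $G[B_G(v,d)]$, which gives connectedness; moreover this yields the useful refinement that $\dist_{G[B_G(v,d)]}(v,u)=\dist_G(v,u)$ for every $u\in B_G(v,d)$.

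\emph{Acyclicity.} Here I would argue by contradiction using a breadth-first-search tree. Let $T$ be a BFS tree of the connected graph $G[B_G(v,d)]$ rooted at $v$, so that $\dist_T(v,u)=\dist_{G[B_G(v,d)]}(v,u)=\dist_G(v,u)\le d$ for every vertex $u$. If $G[B_G(v,d)]$ is not a tree, then, being connected, it contains an edge $uu'$ that is not an edge of $T$. The fundamental cycle of $uu'$ with respect to $T$, namely the unique $(u,u')$-path in $T$ together with the edge $uu'$, is then a cycle of $G$ of length
\[
    1+\dist_T(u,u')\le 1+\dist_T(v,u)+\dist_T(v,u')\le 1+2d<2d+2,
\]
contradicting the assumption that $G$ has girth at least $2d+2$. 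Therefore $E(G[B_G(v,d)])=E(T)$, so $G[B_G(v,d)]=T$ is a tree.

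I do not expect a serious obstacle; the one idea that matters is the choice to compare against a BFS tree. This is precisely what guarantees that the cycle produced from a putative non-tree edge exceeds a concatenation of two shortest-path segments by only a single edge, keeping its length at most $2d+1$; a cruder approach that merely finds two vertices of a cycle that are close in $G$ but far along the cycle does not yield a contradiction, since a length-$2d$ detour together with a length-$d$ arc can still exceed the girth. The points needing a line of care are that the fundamental cycle really is a cycle of length at least~$3$, which holds because a non-tree edge $uu'$ forces $\dist_T(u,u')\ge 2$, and that the BFS tree must be taken inside $G[B_G(v,d)]$ rather than inside $G$ — harmless, since by the connectedness step distances from $v$ in the two graphs agree. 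One could equivalently use a BFS tree of $G$ truncated at depth~$d$; I would present the version above.
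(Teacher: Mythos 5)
Your proof is correct and follows essentially the same route as the paper's: take a BFS tree of $G[B_G(v,d)]$ rooted at $v$, note it has height at most $d$, and observe that any non-tree edge would close a cycle of length at most $2d+1$, contradicting the girth hypothesis. The extra care you take with connectedness of the ball and with the fundamental cycle having length at least $3$ is sound and merely makes explicit what the paper leaves implicit.
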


\begin{proof}
    Let~$T$ be a BFS spanning tree of $G[B_G(v,d)]$ rooted at~$v$.
    Observe that $T$ has height at most~$d$.
    Therefore, if $G[B_G(v,d)] \neq T$, then $G[B_G(v,d)]$ contains a cycle of length at most~${2d+1}$.
\end{proof}

We now analyse the structure of a graph~$G$ relative to a shortest cycle~$C$ where~${G-B_G(C,d)}$ has no cycles.
Let~$F$ be a rooted tree.
For two distinct vertices~$a$ and~$b$ in~$F$, let~${F(a, b)}$ be the subtree of~$F$ induced by the set of all descendants of~$w$ where~$w$ is the least common ancestor of~$a$ and~$b$ in~$F$. 

\begin{lemma}\label{lem:main}
    Let~$d$ be a positive integer and let~$G$ be a graph of girth at least~${8d+5}$.
    If~$G$ has a shortest cycle~$C$ such that ${G-B_G(C,d)}$ is a forest, then~$G$ contains either a distance-$d$ packing of two cycles or there are sets~$X_1$ and~$X_2$ with~$\abs{X_1} \leq 12(d+1)$ and~$\abs{X_2} \leq 12$ such that both~${G-B_G(X_1,2d)}$ and~${G-B_G(X_2,3d)}$ are forests. 
\end{lemma}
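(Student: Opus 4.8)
The plan is to exploit the hypothesis that $G-B_G(C,d)$ is a forest together with the large-girth assumption to show that only a bounded number of "attachment regions" of $C$ are responsible for the cyclic structure of $G$, unless two far-apart cycles already appear. First I would fix the shortest cycle $C$, set $F \coloneqq G - B_G(C,d)$ (a forest), and for each vertex $v$ of $C$ record the ball $B_G(v,d)$; by Lemma~\ref{lem:girth} and the girth bound $8d+5 \geq 2d+2$, each such ball induces a tree, which gives a clean handle on paths entering and leaving these balls. The key dichotomy I would aim for: either there exist two $C$-paths (or a $C$-path and a disjoint cycle, or two disjoint cycles hanging off $C$) that are pairwise at distance more than $d$ in $G$ — in which case two of the involved cycles, obtained by closing up $C$-paths along $C$, form a distance-$d$ packing of two cycles — or all such "ears" cluster around a bounded number of vertices of $C$, which after taking $B_G(\cdot,2d)$ or $B_G(\cdot,3d)$ we can delete.

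The main structural step is to understand $C$-paths in $G$. Since $C$ is a shortest cycle of length at least $8d+5$, every vertex outside $C$ has at most one neighbour on $C$ (otherwise one gets a short cycle), and more generally any path of length $\leq 4d+2$ with both ends on $C$ must connect two vertices of $C$ that are close along $C$. I would classify the endpoints on $C$ of all $C$-paths, and the "roots" on $C$ of all cycles in $B_G(C,d) \setminus V(C)$ that touch $C$; call the set of these relevant vertices of $C$ the \emph{support} $S \subseteq V(C)$. The claim to establish is: if $S$ contains two vertices that are far apart along $C$ and the associated ears are internally disjoint from the relevant neighbourhoods, then we can route two cycles through $C$ that are at distance $> d$, using that the long arc of $C$ between them has length $> 2d$ (this is where the precise constant $8d+5$ is calibrated — it must leave room for two disjoint subpaths of $C$ of length $> 2d$ plus the ears plus slack). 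Otherwise $S$ is contained in a union of at most a constant number (I expect $\leq 6$ or so, matching the final $12(d+1)$ and $12$) of short arcs of $C$; picking one representative vertex from each arc gives $X_1$ (the radius-$2d$ version, where the factor $d+1$ comes from also needing to cover $C$ itself and the length-$d$ tendrils, hence roughly $|V(C)| \leq$ something like $12(d+1)$ after accounting for the girth bound) and a coarser clustering gives the $12$-vertex set $X_2$ at radius $3d$.

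Then I would verify that $G - B_G(X_1, 2d)$ is a forest: any cycle $D$ surviving this deletion is disjoint from all of $C$, from all ears we accounted for, and from $B_G(C,d)$ entirely (since $X_1$ includes a representative near every part of $C$ and radius $2d$ swallows $B_G(C,d)$ locally, plus the relevant $C$-paths), so $D$ lives in the forest $F = G - B_G(C,d)$ — contradiction; and moreover any two distinct cycles of $G$ that both avoid $B_G(C,d)$ and both survive would, together with a connecting structure through $C$, violate the no-distance-$d$-packing conclusion we are in, so in fact $D$ cannot exist. The argument for $X_2$ with radius $3d$ is the same but allows the cruder bound $\lvert X_2 \rvert \leq 12$ because the larger radius absorbs the lengths of the ears without needing one representative per ear-cluster.

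The hard part will be the careful bookkeeping in the central dichotomy: precisely how to choose the two cycles through $C$ so that their pairwise distance genuinely exceeds $d$ (this needs the two chosen arcs of $C$ to be separated by more than $d$ along \emph{every} path, not just along $C$, which is where one uses that $C$ is shortest so that chords can only shortcut locally), and extracting the exact constants $12(d+1)$ and $12$ from the clustering — in particular arguing that the number of "independent" far-apart ears is small by a packing argument on the cycle $C$ (vertices of $S$ that are pairwise at arc-distance $> 2d$ must be few, roughly $|V(C)|/(2d)$, but that is not yet bounded, so the real content is showing that we do not need \emph{all} of $S$, only $O(1)$ clusters, which forces a more subtle argument: any ear not near one of a few "heavy" clusters can be absorbed into the forest). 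I would expect to handle this by first bounding the number of vertices of $C$ that are endpoints of ears reaching "new" cyclic territory, showing two such suffice to build the packing, hence at most one such cluster exists, and padding the constant to absorb the remaining combinatorial cases (type-1 cycles, two ears sharing an arc, etc.).
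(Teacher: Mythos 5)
Your setup (shortest cycle $C$, the observation that there is no short $C$-path, the tree structure of the balls, the forest $F=G-B_G(C,d)$, and the general dichotomy) matches the paper's proof, but the two load-bearing steps are missing or wrong. First, the central claim that \emph{two} far-apart ears already yield a distance-$d$ packing does not hold and is not what the paper does: with two ears one may only obtain a theta-like subgraph whose cycles pairwise share paths, or two cycles joined by a short arc of $C$. The paper instead greedily extracts up to four pairwise far-apart rooted subtrees $F_1,\dots,F_t$ of $F$, each containing two vertices $a_i,b_i$ of $B_G(C,d+1)$, forms the auxiliary graph $H=C\cup\bigcup_i(P_i\cup R_{a_i}\cup R_{b_i})$, and only for $t\geq 4$ can it invoke Lemma~\ref{lem:twocycles} (together with the ``appendage'' analysis) to get two \emph{vertex-disjoint} cycles; verifying that these are at distance more than $d$ is then a substantial case analysis on a hypothetical short connecting path, which crucially uses the rules of the greedy construction (each $F_j$ avoids $S(L_{j-1},2d)$ and $B_G(W_{j-1},d)$, the root of $F_j$ has maximum level, and each ball $B_G(w,d)\cap V(F)$ meets $B_G(C,d+1)$ in at most one vertex). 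Your sketch does not supply any mechanism playing the role of the level-maximality choice or of the auxiliary vertices $w_j,z_j$, and without it the distance verification fails: the connecting path can descend through the forest and re-enter $B_G(C,d)$ at a tendril you never accounted for.

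Second, your bound on the number of clusters is exactly the point you concede is unproven (``that is not yet bounded''), and your proposed fix (``at most one such cluster exists'') is both unjustified and inconsistent with the target constants. In the paper the bound comes for free from the same greedy process: if it stops after $t\leq 3$ rounds, then $X_1=L_t\cup W_t$ has at most $2t(2d+1)+2t\leq 12(d+1)$ vertices, and the stopping condition itself (no surviving component of $F$ contains two vertices of $B_G(C,d+1)$ with new supports) is what certifies that $G-B_G(X_1,2d)$ is a forest. Your forest verification, by contrast, asserts that a surviving cycle ``lives in $F$'' and derives a contradiction, but a cycle can repeatedly enter and leave $B_G(C,d)$ through tendrils attached far from your chosen clusters; ruling this out is precisely what the termination condition of the recursion does, and your proposal has no substitute for it. So the proposal identifies the right ingredients but is missing the single construction (the greedy subtree extraction with its stopping rule) that simultaneously produces the packing when it runs long and the hitting sets when it stops early.
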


\begin{proof}
    For every vertex~$v$ of~$C$ and every positive integer~${i}$, let~${S(v,i) \coloneqq B_{G-(V(C)\setminus \{v\})}(v, i)}$. 
    For a set~${A \subseteq V(C)}$, let~${S(A,i) \coloneqq \bigcup_{v\in A}S(v,i)}$.
    Since~$G$ has girth at least~${8d+5 \geq 2d+2}$, for every vertex~$v$ of~$C$, we have that~${G-(V(C)\setminus \{v\})}$ also has girth at least~${2d+2}$, and that~${G[S(v,d)]}$ is a tree by Lemma~\ref{lem:girth}. 

    We claim that~$G$ has no short $C$-paths.

    \begin{claim}\label{claim:nolongcpath}
        $G$ has no $C$-path of length at most~${4d+2}$. 
    \end{claim}
    
    \begin{subproof}
        Suppose that such a $C$-path~$Q$ exists.
        Let~$x$ and~$y$ be the ends of~$Q$.
        Since~$C$ has length at least $8d+5$, one of the two $(x,y)$-paths in~$C$, say~$R$, has length at least $4d+3$.
        Then the subgraph obtained from $C\cup Q$ by removing the internal vertices of~$R$ is a cycle shorter than~$C$, contradicting the assumption that~$C$ is a shortest cycle.
    \end{subproof}
    
    By Claim~\ref{claim:nolongcpath} and the girth condition, we have that 
    \begin{itemize}
        \item for distinct vertices~$v$ and~$w$ of~$C$, ${S(v,d) \cap S(w,d) = \emptyset}$ and there is no edge between~$S(v,d)$ and~$S(w,d)$, and 
        \item for every~${v \in B_G(C,d+1) \setminus B_G(C,d)}$, $v$ has a unique neighbour in~$B_G(C,d)$. 
    \end{itemize}
    For each~${v \in B_G(C,d+1) \setminus B_G(C,d)}$, let~$\supp(v)$ be the unique vertex of~$C$ such that $S(\supp(v),d)$ contains a neighbour of~$v$, and let~$R_v$ be the unique ${(v,\supp(v))}$-path of length~${d+1}$ in~$G$.
    Note that~${R_v-v}$ is fully contained in~$S(\supp(v),d)$.

    Let~${F \coloneqq G-B_G(C, d)}$.
    For each component~$F'$ of~$F$, we consider~$F'$ as a rooted tree with an arbitrary fixed root. 
    For every subtree~$F''$ of a component~$F'$ of~$F$, let~$\rt(F'')$ be the vertex of~$F''$ closest in~$F'$ to the root of~$F'$. 
    We consider~$F''$ as a rooted subtree having $\rt(F'')$ as a root.
    For every vertex~$v$ of~$F$, the \emph{level} of~$v$, denoted by~$\lv(v)$, is the distance in~$F$ between~$v$ and the root of the component of~$F$ containing~$v$.

    We will use the following claim.
    
    \begin{claim}\label{claim:ballinforest}
        For $v\in V(F)$, $B_{G}(v, d)\cap V(F)$ has no two vertices $w$ and $z$ in $B_G(C,d+1)$.
    \end{claim}
    \begin{subproof}
        Towards a contradiction, suppose that such vertices $w$ and $z$ exist.
        Since $w,z\in B_{G}(v,d)$ and $\dist_G(v,V(C))>d$, there is a $(w,z)$-path $P$ of length at most $2d$ in $G-V(C)$.
        Then  $R_{w}\cup R_{z}\cup P$ contains a $C$-path of length at most $4d+2$, contradicting Claim~\ref{claim:nolongcpath}.
    \end{subproof}
    
    We now recursively find a sequence $F_1,F_2,\ldots$ of pairwise disjoint subtrees of~$F$, vertices $a_i, b_i\in V(F_i)\cap B_G(C, d+1)$, $w_i\in V(F_i)$, $z_i\in V(F)$, and an $(a_i, b_i)$-path~$P_i$ in~$F_i$ for each~$i$.
    For each $j$, let $L_j\coloneqq B_C\left(\bigcup_{i\in [j]}\{\supp(a_i),\supp(b_i)\}, d\right)$ and $W_j\coloneqq \bigcup_{i\in [j]}\{w_i, z_i\}$. The recursive construction is as follows.
    
    \begin{itemize}
        \item Assume that for $i\in[j-1]$, we have found a subtree $F_i$, vertices $a_i,b_i,w_i,z_i$, and an $(a_i,b_i)$-path $P_i$.
        Note that $L_0=\emptyset$.
        Now, let
        \[
            F_{j-1}^*\coloneqq F-S(L_{j-1},2d)-B_G(W_{j-1},d).
        \] 
        \item If no component of~$F_{j-1}^*$ contains distinct vertices $a,b\in B_G(C, d+1)$ where $\{\supp(a), \supp(b)\}$ is disjoint from~$L_{j-1}$, then we stop the procedure. 
        Otherwise, we choose two vertices~${a_j,b_j \in B_G(C,d+1)}$ such that 
        \begin{enumerate}
            \item $a_j$ and~$b_j$ are contained in the same component~$F'$ of~$F_{j-1}^*$ and \linebreak ${\{\supp(a_j), \supp(b_j)\}\cap L_{j-1} = \emptyset}$; and
            \item subject to (1), $\lv(\rt(F'(a_j, b_j)))$ is maximum.
        \end{enumerate}  
        \item Let ${F_j \coloneqq F'(a_j, b_j)}$, let~$w_j$ be the root of~$F_j$, and let~$P_j$ be the $(a_j, b_j)$-path in~$F_j$.
        If $B_G(w_j, d)\cap V(F)$ contains no vertex in $B_G(C, d+1)$, then let $z_j \coloneqq w_j$.
        Otherwise, by Claim~\ref{claim:ballinforest}, $B_{G}(w_j, d)\cap V(F)$ contains a unique vertex in $B_G(C, d+1)$.
        Let $z_j$ be the vertex.
    \end{itemize}
    
    Let~$t$ be the maximum integer~$j$ for which~$F_j$ is defined.
    We claim that the subtrees~$F_1,\ldots,F_t$ are far from each other in~$F$.
    
    \begin{claim}\label{claim:disjointsubtree}
        For distinct $i,j\in[t]$, we have that $\dist_F(F_i,F_j)\geq d+1$.
    \end{claim}
    
    \begin{subproof}
        Towards a contradiction, suppose that~${\dist_F(F_i, F_j) \leq d}$. 
        Without loss of generality, we may assume that $i<j$.
        Let $Q=u_1u_2\cdots u_m$ be a shortest $(F_i,F_j)$-path in~$F$ with $u_1\in V(F_i)$. 
        Note that~$u_1$ is the unique vertex of~$Q$ contained in~$V(F_i)$. 

        Since~$Q$ has length at most~$d$, if $u_1=w_i$, then $V(Q)\subseteq B_G(w_i,d)\subseteq B_G(W_{j-1}, d)$, contradicting the condition that~$F_j$ avoids $B_G(W_{j-1},d)$. 
        Thus, $u_1\neq w_i$, and therefore~$w_j$ is a descendant of~$w_i$. 
        Since $w_j\neq w_i$, we have that $\lv(w_j)>\lv(w_i)$, contradicting the choice of~$F_i$. 
        Hence, $\dist_F(F_i, F_j)\geq d+1$. 
    \end{subproof}
    
    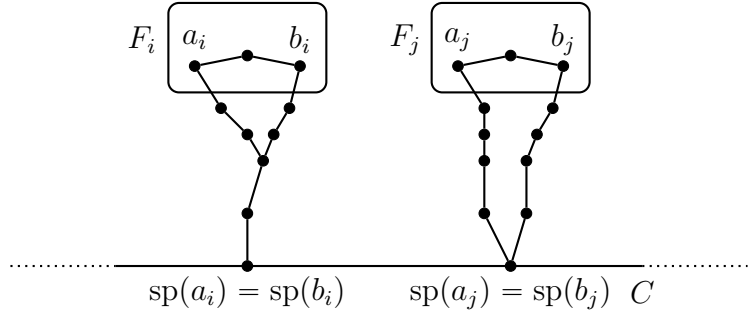
\begin{figure}[t]
        \centering
        \begin{tikzpicture}[scale=0.7]
        \tikzstyle{w}=[circle,draw,fill=black,inner sep=0pt,minimum width=4pt]
        \draw[rounded corners, thick] (2,1)--(12,1);
        \draw[rounded corners, thick, dotted] (12,1)--(14,1);
        \draw[rounded corners, thick, dotted] (0,1)--(2,1);
    
        \node at (12, 0.5) {$C$};
        \draw (4.5, 5) node [w] (v0) {};
        \draw (3.5, 4.8) node [w] (v1) {};
        \draw (4, 4) node [w] (v2) {};
        \draw (4.5, 3.5) node [w] (v3) {};
        \draw (4.8, 3) node [w] (v4) {};
        \draw (4.5, 2) node [w] (v5) {};
        \draw (4.5, 1) node [w] (v6) {};
        \draw (5.5, 4.8) node [w] (w1) {};
        \draw (5.3, 4) node [w] (w2) {};
        \draw (5, 3.5) node [w] (w3) {};
    
        \node at (3.5, 5.3) {$a_i$};
        \node at (5.5, 5.3) {$b_i$};
        \node at (2.5, 5.3) {$F_i$};
        \node at (4.5, 0.5) {$\supp(a_i)=\supp(b_i)$};
        \draw[rounded corners, thick] (3,5.3)--(3,6)--(6,6)--(6, 4.3)--(3,4.3)--(3,5.3);
        \draw[thick] (v1)--(v0)--(w1);
        \draw[thick] (v1)--(v2)--(v3)--(v4)--(v5)--(v6);
        \draw[thick] (w1)--(w2)--(w3)--(v4);
    
        \draw (9.5, 5) node [w] (z0) {};
        
        \draw (8.5, 4.8) node [w] (z1) {};
        \draw (9, 4) node [w] (z2) {};
        \draw (9, 3.5) node [w] (z3) {};
        \draw (9.8, 3) node [w] (z4) {};
        \draw (9.8, 2) node [w] (z5) {};
        \draw (9.5, 1) node [w] (z6) {};
        
        \draw (10.5, 4.8) node [w] (a1) {};
        \draw (10.3, 4) node [w] (a2) {};
        \draw (10, 3.5) node [w] (a3) {};
        
        \draw (9, 3) node [w] (b4) {};
        \draw (9, 2) node [w] (b5) {};
        
        \node at (8.5, 5.3) {$a_j$};
        \node at (10.5, 5.3) {$b_j$};
        \node at (7.5, 5.3) {$F_j$};
        \node at (9.5, 0.5) {$\supp(a_j)=\supp(b_j)$};
        \draw[rounded corners, thick] (8,5.3)--(8,6)--(11,6)--(11, 4.3)--(8,4.3)--(8,5.3);
        
        \draw[thick](z1)--(z2)--(z3)--(b4)--(b5)--(z6);
        \draw[thick](a1)--(a2)--(a3)--(z4)--(z5)--(z6);
        \draw[thick](z1)--(z0)--(a1); 
        \end{tikzpicture}
        \caption{Possible appendages of $H$. Every appendage of $H$ contains one vertex of $C$.}
        \label{fig:appendage}
    \end{figure}

    For each $i\in [t]$, let $M_i\coloneqq P_i\cup R_{a_i}\cup R_{b_i}$.
    Let
    \[
        H \coloneqq C\cup\left(\bigcup_{i\in [t]}M_i\right).
    \]
    Note that for $i\in[t]$, $\supp(a_i)$ may be the same as $\supp(b_i)$.
    In that case, $R_{a_i}$ and~$R_{b_i}$ meet.
    Since $G[S(a_i,d)]$ is a tree, if~$R_{a_i}$ and~$R_{b_i}$ intersect exactly on~$C$, then the vertex in the intersection has degree~$4$ in~$H$, and other vertices of~$M_i$ have degree~$2$ in~$H$.
    For every other case, all vertices of~$M_i$ have degree~$2$ or~$3$ in~$H$.
    See Figure~\ref{fig:appendage} for an illustration.
    
    Let~$J$ be the set of integers $j\in[t]$ such that~$R_{a_j}$ and~$R_{b_j}$ intersect.
    For each $j\in J$, we say that $M_j$ is an \emph{appendage} of~$H$.
    Thus, $H$ can be seen as a graph obtained from the subgraph $C\cup\left(\bigcup_{i\in[t]\setminus J}M_i\right)$, all of whose vertices have degree~$2$ or~$3$, by attaching vertex-disjoint appendages on $C$.
    Clearly, each appendage contains a cycle.

    We first consider the case that~${t < 4}$.
    In this case, we show that~${X_1 \coloneqq L_t \cup W_t}$ and ${X_2 \coloneqq W_t\cup\bigcup_{i \in [t]} \{ \supp(a_i),\supp(b_i) \}}$ are desired sets.
    Since $t<4$, we have that
    \[
        \abs{L_t} + \abs{W_t} 
        \leq (2t) (2d+1) + 2t 
        = 4t(d+1) 
        \leq 12(d+1).
    \]
    and~${\abs{X_2} \leq 4t \leq 12}$.
    
    Let~${G^* \coloneqq G-S(L_t,2d)-B_G(W_t,d)}$.
    To show that~${X_1}$ and~${X_2}$ are as desired, it suffices to show that~$G^*$ is a forest.
    Towards a contradiction, suppose that $G^*$ contains a cycle, say $D$.
    Since $t\geq1$ and the graph obtained from $G[B_G(C,d)]$ by removing one vertex of~$C$ is a forest, $D$ contains a vertex of~$F$.
    Thus, $D$ contains a vertex of
    \[
        F^*\coloneqq F-S(L_t,2d)-B_G(W_t, d).
    \] 

    Let $Y=y_1y_2\cdots y_h$ be a component of $D\cap F^*$.
    As every vertex in $B_G(C,d+1)\setminus B_G(C,d)$ has a unique neighbour in $B_G(C, d)$, we have $h\geq2$.
    Let~$y_0$ be the neighbour of~$y_1$ in~$D$ other than~$y_2$, and let~$y_{h+1}$ be the neighbour of~$y_h$ in~$D$ other than~$y_{h-1}$.
    Observe that $y_0,y_{h+1}\in B_G(C,d)$ and thus, $y_0\in S(\supp(y_1),d)$ and $y_{h+1}\in S(\supp(y_h),d)$.
    If $\supp(y_1)$ or $\supp(y_h)$ is contained in $L_t$, then $\dist_G(L_t,V(Y))\leq d+1\leq2d$, a contradiction.
    Thus, we may assume that neither $\supp(y_1)$ nor $\supp(y_h)$ is contained in $L_t$.
    But then there exists a component of~$F_t^*$ that contains two vertices in~${B_G(C,d+1)}$, contradicting the maximality of~$t$.

    Hence, we conclude that~${X_1}$ and~${X_2}$ are desired sets when~${t<4}$.

    \medskip

    We now consider the case that $t\geq4$.
    In this case, we find a distance-$d$ packing of two cycles.
    To do this, we first find two vertex-disjoint cycles in~$H$.

    \begin{claim}
        \label{claim:manycycles}
        If $t\geq4$, then~$H$ has vertex-disjoint cycles~$C_1$ and~$C_2$ such that for each $i\in[2]$, if~$C_i$ is a cycle contained in some appendage~$R$, then~$C_{3-i}$ is disjoint from~$R$.
    \end{claim}
    \begin{subproof}
        Assume that $t\geq4$.
        If~$H$ contains two appendages, then we are done.
        Thus, we may assume that~$H$ contains at most one appendage.

        Suppose first that~$H$ has no appendages.
        Then every vertex of~$H$ has degree~$2$ or~$3$ in~$H$.
        Since $t\geq4$, $H$ has at least eight vertices of degree~$3$.
        Thus, by Lemma~\ref{lem:twocycles}, it has two disjoint cycles, as required.

        We now suppose that~$H$ has an appendage, say~$R$.
        Let~$H'$ be the graph obtained from~${H-V(R)}$ by recursively removing degree-$1$ vertices.
        Since~$R$ is the unique appendage of~$H$, the number of degree-$3$ vertices of~$H'$ is at least 
        $2(4-1)-2=4$.
        Then~$H'$ contains a cycle~$D$.
        Note that~$D$ is also a cycle in~$H$ and is disjoint from~$R$.
        This proves the claim.
    \end{subproof} 

    By Claim~\ref{claim:manycycles}, $H$ contains vertex-disjoint cycles~$C_1$ and~$C_2$ such that for each $i\in[2]$, if~$C_i$ is a cycle contained in some appendage~$R$, then~$C_{3-i}$ is disjoint from~$R$.
    We show that $\{C_1,C_2\}$ is a distance-$d$ packing of cycles in~$G$.
    
    Let~$Q$ be a shortest $(C_1,C_2)$-path in~$G$.
    Towards a contradiction, suppose that~$Q$ has length at most~$d$. 
    Using the following claims, we argue that~$Q$ has to be contained in~${G-V(C)}$. 
    
    \begin{claim}
        \label{claim:middle}
        If an internal vertex~$w$ of~$Q$ is a vertex of~$C$ such that~${S(w,d)\cap V(C_1\cup C_2)=\emptyset}$, then the neighbours of~$w$ in~$Q$ are contained in~$C$. 
    \end{claim}
    
    \begin{subproof}
        If a neighbour of~$w$ in~$Q$ is contained in $S(w,d)$, then~$Q$ has to have length more than~$d$, a contradiction.
        Thus, the two neighbours of~$w$ in~$Q$ are contained in~$C$.
    \end{subproof}

    \begin{claim}\label{claim:notonC1}
        $Q$ does not contain a vertex in $V(C)\setminus V(C_1\cup C_2)$.
    \end{claim}
    
    \begin{subproof}
        Towards a contradiction, suppose that~$Q$ contains a vertex $w\in V(C)\setminus V(C_1\cup C_2)$.

        Assume first that in~$Q$, every internal vertex~${z \in V(C)}$ satisfies ${S(z,d)\cap V(C_1\cup C_2) = \emptyset}$.
        Since~$w$ is internal, by Claim~\ref{claim:middle}, $Q$ is a path in~$C$ between two vertices in $\bigcup_{i\in [t]}\{\supp(a_i),\supp(b_i)\}$.
        By the construction, the ends of $Q$ are contained in $\{\supp(a_j), \supp(b_j)\}$ for some~${j \in [t]}$, because otherwise it has length more than~$d$.
        Then the ends of~$Q$ are contained in the same cycle, a contradiction.

        Therefore, $Q$ has a vertex~$z$ of~$C$ such that~${S(z,d)\cap V(C_1\cup C_2)\neq\emptyset}$.
        This means that one of~$C_1$ and~$C_2$ is a cycle contained in an appendage of~$H$.
        On the other hand, since $S(z,d)$ cannot contain vertices from both~$C_1$ and~$C_2$, one of the neighbours of~$z$ in~$Q$ is contained in~$C$.
        Therefore, following from $z$ to its neighbour on~$C$, it traverses to some vertex of $\bigcup_{i\in[t]}\{\supp(a_i),\supp(b_i)\}$.
        Since $z$ is also in $\bigcup_{i\in [t]}\{\supp(a_i),\supp(b_i)\}$, $Q$ has length more than $d$, a contradiction. 
    \end{subproof}

    \begin{claim}
        \label{claim:notonC2}
        $Q$ is contained in $G-V(C)$.
    \end{claim}
    
    \begin{subproof}
        By Claim~\ref{claim:notonC1}, it suffices to show that no end of~$Q$ is contained in $(C_1\cup C_2)\cap C$. 

        Towards a contradiction, suppose that an end~$w$ of~$Q$ is contained in~${(C_1 \cup C_2) \cap C}$. 
        Without loss of generality, assume that~$w$ is in~${C_1 \cap C}$. 
        Since~$Q$ is a shortest $(C_1,C_2)$-path, the neighbour of~$w$ in~$Q$, say~$w'$, is not in~$C_1$. 
        By the construction, $w'$ is not in~$C\cap C_2$. 
        Since $Q$ does not contain a vertex in $V(C)\setminus V(C_1\cup C_2)$, $w'$ is contained in~$S(w,d)$. 
        If~$C_2$ has no vertex in $S(w,d)$, then~$Q$ has length longer than~$d$, a contradiction. 
        Thus,~$C_2$ has a vertex in~$S(w,d)$. 
        Since~$w$ is not in~$C_2$, we deduce that~$C_2$ is a cycle contained in an appendage of~$H$. 
        Then by our choice of~$C_1$ and~$C_2$ from Claim~\ref{claim:manycycles}, $C_1$ cannot contain~$w$, a contradiction. 
        Hence, $Q$ is contained in $G-V(C)$. 
    \end{subproof}
    
    Let~${Q = u_1 u_2 \cdots u_m }$.
    By Claim~\ref{claim:notonC2}, there exist distinct integers~${\beta, \gamma \in [t]}$ for which ${u_1 \in V(M_\beta) \setminus V(C)}$ and~${u_m \in V(M_\gamma) \setminus V(C)}$.
    By symmetry, we may assume that~${\beta < \gamma}$.

    Since~${\dist_F(F_\beta, F_\gamma) \geq d+1}$ by Claim~\ref{claim:disjointsubtree}, $Q$ contains some vertex of~$B_G(C,d)$.
    Let~$\eta$ be the minimum integer in~$[m]$ such that~${u_{\eta } \in B_G(C, d)}$.
    The existence of~$u_{\eta}$ implies that if~${u_1 \in V(R_{a_{\beta}} \cup R_{b_{\beta}})}$ or~${u_m \in V(R_{a_{\gamma}} \cup R_{b_{\gamma}})}$, then~$G$ has either a $C$-path of length at most~${3d+1}$ or a cycle of length at most~${3d+1}$.
    This contradicts Claim~\ref{claim:nolongcpath} or the girth condition.
    Thus, we may assume that~${u_1 \in V(P_\beta) \setminus \{ a_\beta, b_\beta \}}$ and~${u_m \in V(P_\gamma) \setminus \{ a_\gamma, b_\gamma\}}$. 

    We divide into two cases depending on the location of~$u_1$.
    
    \medskip
    \noindent\textbf{Case 1: ${u_1 = w_\beta}$.}
    
    In this case, $u_{\eta-1}$ is contained in $B_{G}(w_\beta, d)$.
    By Claim~\ref{claim:ballinforest}, $B_{G}(w_\beta, d)\cap V(F)$ contains a unique vertex having a neighbour in $B_G(C, d)$, and we defined it as $z_{\beta}$.
    Thus, $u_{\eta-1}=z_{\beta}$.
    Then there is a path of length at most~$d$ from~$z_{\beta}$ to~$P_{\gamma}$, contradicting the fact that~$F_{\gamma}$ avoids~$B_G(W_{\gamma-1},d)$.
    
    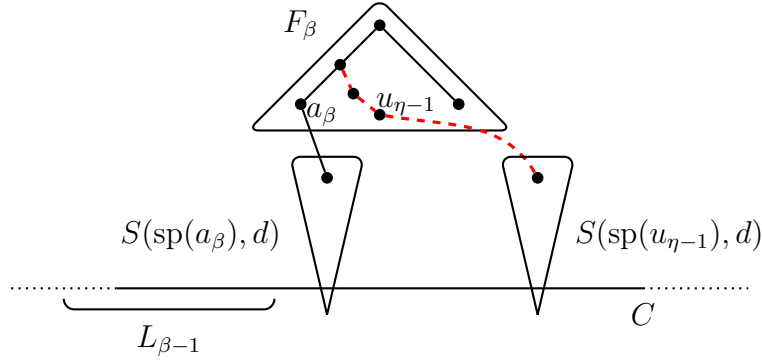
\begin{figure}[t]
        \centering
        \begin{tikzpicture}[scale=0.7]
        \tikzstyle{w}=[circle,draw,fill=black,inner sep=0pt,minimum width=4pt]
        
        \draw[rounded corners, thick] (2,1)--(12,1);
        \draw[rounded corners, thick, dotted] (12,1)--(14,1);
        \draw[rounded corners, thick, dotted] (0,1)--(2,1);
        
        \node at (12, 0.5) {$C$};
        \node at (5.5, 6) {$F_{\beta}$};
        \node at (7.5, 4.5) {$u_{\eta-1}$};
        \node at (12.5, 2) {$S(\supp(u_{\eta-1}), d)$};
        \node at (3.6, 2) {$S(\supp(a_{\beta}), d)$};
        \node at (5.9, 4.3) {$a_{\beta}$};
        
        \draw (7, 6) node [w] (v1) {};
        \draw (5.5, 4.5) node [w] (v2) {};
        \draw (8.5, 4.5) node [w] (v3) {};
        
        \draw (6.25, 5.25) node [w] (w1) {};
        \draw (6.5, 4.7) node [w] (w2) {};
        \draw (7, 4.3) node [w] (w3) {};
        \draw (10, 3.1) node [w] (w4) {};
        \draw (6, 3.1) node [w] (w5) {};
        
        \draw[rounded corners, thick] (6.5,6)--(4.5,4)--(9.5,4)--(7.5,6)--(7,6.5)--(6.5,6);
        \draw[thick] (v2)--(v1)--(v3);
        \draw[very thick, color=red, dashed] (w1)--(w2)--(w3);
        \draw[very thick, color=red, dashed] (w3) to[out=-10,in=120] (w4);
        \draw[rounded corners, thick] (10,0.5)--(10.7,3.5)--(9.3,3.5)--(10,0.5);
        \draw[rounded corners, thick] (6,0.5)--(6.7,3.5)--(5.3,3.5)--(6,0.5);
        
        \draw[rounded corners, thick] (1,0.8)--(1,0.6)--(5,0.6)--(5,0.8);
        \node at (3, 0) {$L_{\beta-1}$};
        \draw[thick](v2)--(w5);
        \end{tikzpicture}
        \caption{When $u_{\eta-1}\in V(F_{\beta})$ in Case~2. 
            The dashed path denotes the path~$Q$. 
            If this happens, then in the $\beta$-th step, we had to select a subtree whose root has level larger than the root of~$F_{\beta}$.} 
        \label{fig:case31}
    \end{figure}

    \medskip
    \noindent\textbf{Case 2: ${u_ 1\in V(P_\beta) \setminus \{w_\beta\}}$.}

    Note that since~${u_1 \in V(C_1)}$, by construction~${V(P_\beta) \subseteq C_1}$. 
    Since the parent of $u_1$ is a vertex of~$P_{\beta}$ and no internal vertex of~$Q$ is contained in~${C_1 \supseteq V(P_{\beta})}$, we deduce that~$u_{\eta-1}$ is a descendant of~$u_1$ in~$F$. 
    If~${\supp(u_{\eta-1}) \in L_{\beta-1}}$, then~$P_\beta$ contains a vertex of~${S(L_{\beta-1}, 2d)}$, a contradiction. 
    Thus, ${\supp(u_{\eta-1}) \notin L_{\beta-1}}$. 
    
    We first consider the subcase when~${u_{\eta-1} \in V(F_{\beta})}$. 
    See Figure~\ref{fig:case31} for an illustration.
    Since~${\supp(u_{\eta-1}) \notin L_{\beta-1}}$, in the $\beta$-th step, we should have taken a subtree of~$F_{\beta}$ containing~$u_{\eta-1}$ and one of~$a_\beta$ and~$b_\beta$ whose root has level larger than $\lv(w_\beta)$, a contradiction. 

    Hence, ${u_{\eta-1} \notin V(F_\beta)}$.
    Since~$u_{\eta-1}$ is a descendant of~$u_1$ in~$F$, by our construction,~$u_{\eta-1}$ is contained in
    \[
        S(L_{\beta-1},2d)\cup B_G(W_{\beta-1},d).
    \]

    \begin{figure}[t]
        \centering
        \begin{tikzpicture}[scale=0.9]
        \tikzstyle{w}=[circle,draw,fill=black,inner sep=0pt,minimum width=4pt]
        
        \draw[rounded corners, thick] (2,1)--(12,1);
        \draw[rounded corners, thick, dotted] (12,1)--(14,1);
        \draw[rounded corners, thick, dotted] (0,1)--(2,1);
        
        \node at (12, 0.5) {$C$};
        \node at (5.5, 9) {$F_{\beta}$};
        \node at (8.6, 6.2) {$u_{\eta-1}$};
        \node at (10.8, 2) {$S(\supp(z_j), d)$};
        \node at (2.8, 2) {$S(\supp(u_{\eta-1}), d)$};
        
        \draw (7, 9) node [w] (v1) {};
        \draw (5.5, 7.5) node [w] (v2) {};
        \draw (8.5, 7.5) node [w] (v3) {};
        
        \draw (6.25, 8.25) node [w] (w1) {};
        \draw (6.5, 7.7) node [w] (w2) {};
        \draw (7, 7.3) node [w] (w3) {};
        \draw (9, 3.1) node [w] (w4) {};
        \draw (5, 3.1) node [w] (w5) {};
        
        \draw (7.8, 6.5) node [w] (z3) {};
        \draw (8, 6.1) node [w] (z4) {};
        
        \draw (8.5, 4.3) node [w] (z1) {};
        \node at (8.95, 4.2) {$w_j$};
        \node at (10.5, 5) {$B_{G}(w_j, d)$};
        \node at (10.5, 6.3) {$B_{G}(z_j, d)$};
        
        \draw (8.5, 5) node [w] (z2) {};
        \node at (9, 5.1) {$z_j$};
        
        \draw[rounded corners, thick] (6.5,6+3)--(4.5,4+3)--(9.5,4+3)--(7.5,6+3)--(7,6.5+3)--(6.5,6+3);
        \draw[rounded corners, thick]
        (4.7,7)--(4.7,4)--(9.3,4)--(9.3,7);
        \draw[rounded corners, thick]
        (9.6, 5.5)--(7, 5.5)--(7, 3.7);
        \draw[rounded corners, thick]
        (9.6, 6.8)--(7.2, 6.8)--(7.2, 5);
        
        \draw[thick] (v2)--(v1)--(v3);
        \draw[very thick, color=red, dashed] (w1)--(w2)--(w3)--(z3)--(z4);
        \draw[rounded corners, thick] (10-1,0.5)--(10.7-1,3.5)--(9.3-1,3.5)--(10-1,0.5);
        \draw[rounded corners, thick] (6-1,0.5)--(6.7-1,3.5)--(5.3-1,3.5)--(6-1,0.5);
        
        \draw[very thick] (z2) to[out=-10,in=60] (w4);
        \draw[very thick, color=red, dashed] (z4) to[out=-180,in=90] (w5);
        \end{tikzpicture}
        \caption{When $u_{\eta-1}\notin V(F_{\beta})$ and $u_{\eta-1}\in B_{G}\left(\bigcup_{i\in [\beta-1]}\{w_i, z_i\}, d\right)$ in Case 2. The dashed path denotes the path $Q$. If this happens, then the ball $B_{G}(z_j,d)$ contains two vertices $z_j$ and $u_{\eta-1}$ having neighbours in $B_G(C, d)$ which makes a $C$-path of length at most $2d+2$.}
        \label{fig:case32}
    \end{figure}
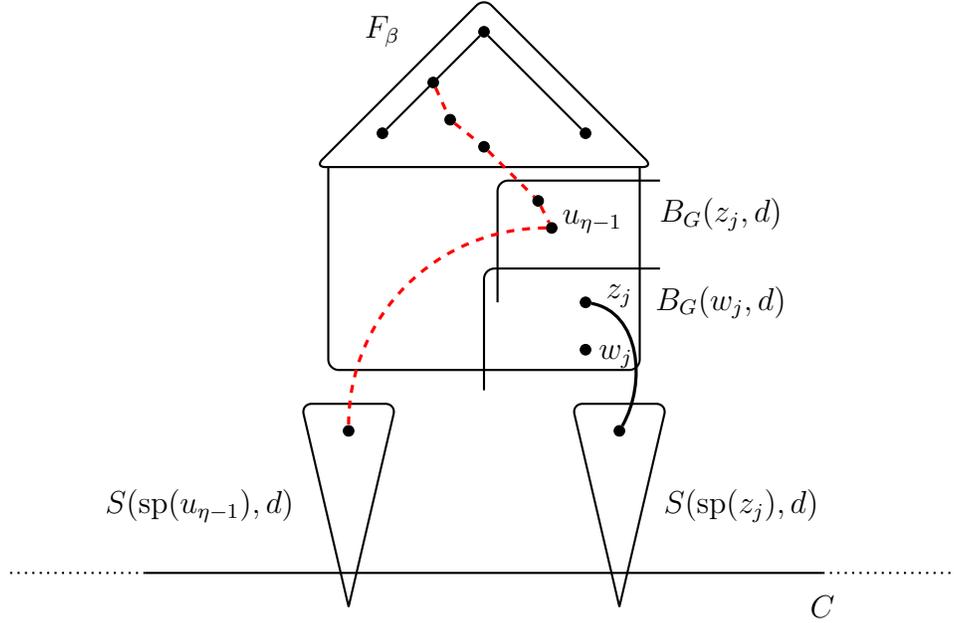

    If~$u_{\eta-1} \in S(L_{\beta-1}, 2d)$, then since~${\supp(u_{\eta-1}) \notin L_{\beta-1}}$, there is a $C$-path of length at most~${3d+1}$, contradicting Claim~\ref{claim:nolongcpath}. 
    Thus, we may assume that~${u_{\eta-1} \in B_{G-V(C)}\left(W_{\beta-1}, d\right)}$. 
    See Figure~\ref{fig:case32} for an illustration.

    Assume that~${u_{\eta-1} \in B_{G}(\{w_j, z_j\}, d)}$ for some~${j \in [\beta-1]}$. 
    If~${w_j = z_j}$, then~$u_{\eta-1}$ is the unique vertex in $B_{G}(w_j, d)$ having a neighbour in $B_G(C, d)$, and we define it as~${z_j = u_{\eta-1}}$. 
    Thus, ${w_j \neq z_j}$. 
    If~${u_{\eta-1} \in B_{G}(w_j, d) \setminus B_{G}(z_j, d)}$, then $B_{G}(w_j, d)$ contains two vertices, namely $u_{\eta-1}$ and $z_j$, having neighbours in $B_G(C, d)$, contradicting Claim~\ref{claim:ballinforest}. 
    Thus,~${u_{\eta-1} \in B_{G}(z_j, d)}$. 
    Since~${u_1 \notin B_{G}(z_j, d)}$ and~${\eta-1 \leq d}$, $u_{\eta-1}$ and~$z_j$ are distinct vertices in~${B_{G}(z_j, d)}$ having neighbours in~$B_G(C, d)$, again contradicting Claim~\ref{claim:ballinforest}. 

    Hence, we conclude that~${\{C_1,C_2\}}$ is a distance-$d$ packing of cycles in~$G$ when~${t \geq 4}$.

    \medskip
    
    Therefore, if~${t < 4}$, then~$G$ has the desired sets~$X_1$ and~$X_2$ of at most~${12(d+1)}$ and~$12$ vertices, respectively, such that~${G-B_G(X_1,2d)}$ and~${G-B_G(X_2,3d)}$ are forest, and otherwise~$G$ has a distance-$d$ packing of two cycles.
    This completes the proof. 
\end{proof}

Now, we are ready to show Theorem~\ref{thm:main4}.

\begin{proof}[Proof of Theorem~\ref{thm:main4}]
    Suppose that~$G$ has no distance-$d$ packing of two cycles.
    If~$G$ is a forest, then we are done by taking~${X_1 \coloneqq \emptyset \eqqcolon X_2}$. 
    Thus, we may assume that~$G$ has a cycle. 

    Let~$C$ be a shortest cycle of~$G$.
    As~$G$ has no distance-$d$ packing of two cycles, ${G-B_G(V(C),d)}$ is a forest. 
    If~$C$ has length less than~${8d+5}$, then we are done by taking~${X_1 \coloneqq V(C)}$ and~$X_2$ to be a set of~$4$ vertices in~$V(C)$ such that~${B_C(X_2,d) = V(C)}$. 
    Thus, we may assume that~$C$ has length at least $8d+5$, that is, $G$ has girth at least~$8d+5$.
    Then by Lemma~\ref{lem:main}, $G$ has sets~$X_1$ and~$X_2$ of at most $12(d+1)$ and~$12$ vertices, respectively, such that both ${G-B_G(X_1,2d)}$ and~${G-B_G(X_2,3d)}$ are forest.
\end{proof}

\section{Bounding tree-independence number}
\label{sec:tin}

We start with a proof of Corollary~\ref{cor:K1tfree}.

\tinbound*

\begin{proof}
    We first apply Theorem~\ref{thm:main1}.
    We may assume that we found a set~$X$ of at most ${f(k,\ell)=\mathcal{O}(\ell k\log k)}$ vertices such that $G'\coloneqq G-B_G(X,1)$ has no $\ell$-cycle, because otherwise we are done.
    Since~$G'$ has no $\ell$-cycle, by Lemma~\ref{lem:birmele}, one can find a tree-decomposition $(T,\beta)$ of~$G$ of width at most~$\ell-2$ in linear time.
    
    For each $u\in V(T)$, let $\beta'(u)\coloneqq\beta(u)\cup B_G(X,1)$.
    It is readily seen that $(T,\beta')$ is a tree-decomposition of~$G$.

    We now show that $(T,\beta')$ has independence number $\mathcal{O}(t\ell k\log k)$.
    Since $\abs{\beta(u)}\leq\ell-1$ for every $u\in V(T)$, it suffices to show that $G[B_G(X,1)]$ has independence number at most~$f(k,\ell)(t-1)$.
    Suppose not.
    By the pigeonhole principle, there exists $v\in X$ such that $G[B_G(v,1)]$ has independence number at least~$t$.
    Since $v$ is adjacent to every other vertex in $B_G(v,1)$, $G[B_G(v,1)]$ has $K_{1,t}$ as an induced subgraph, a contradiction.

    Hence, $(T,\beta')$ has independence number at most $f(k,\ell)(t-1)+\ell-1=\mathcal{O}(t\ell k\log k)$.
\end{proof}

We now present a proof of Corollary~\ref{cor:K1tfreed}. 

\tinboundd*

\begin{proof}
    We apply Theorem~\ref{thm:main4}.
    We may assume that we found a set~$X$ of at most~${12(d+1)}$ vertices such that~${G-B_G(X,2d)}$ is a forest, because otherwise we are done. 
    Since
    \[
        \abs{B_G(X,2d)}\leq12(d+1)\cdot\left(\frac{t\cdot((t-1)^{2d}-1)}{t-2}+1\right)=\mathcal{O}(d\cdot t^{2d}),
    \]
    the statement holds. 
\end{proof}

As mentioned in the introduction, 
we show that the class of $K_{1,3}$-free graphs without a distance-$2$ packing of two cycles has unbounded tree-independence number. We use the following result. For a graph $G$, the \emph{line graph} of $G$, denoted by $L(G)$, is the graph on vertex set $E(G)$ such that for distinct $e,f\in E(G)$, they are adjacent in $L(G)$ if and only if they share an end in $G$.

\begin{theorem}[Dallard et al.~\cite{DallardKKMMW2024}]\label{biclique}
    For all positive integers $m$ and $n$ with $m \leq n$, $\tin(L(K_{m,n})) = m$.
\end{theorem}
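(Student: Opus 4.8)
The plan is to exhibit, for every positive integer~$m$, a single graph in the class with tree-independence number exactly~$m$; the family $\bigl(L(K_{m,m})\bigr)_{m\ge 1}$ will do. The key observation is that $L(K_{m,m})$ is the $m\times m$ \emph{rook's graph}: its vertex set is $[m]\times[m]$, and two distinct vertices~$(r,c)$ and~$(r',c')$ are adjacent if and only if $r=r'$ or $c=c'$ (two edges of $K_{m,m}$ share an endpoint exactly when they agree in one coordinate). Granting this, Theorem~\ref{biclique} applied with $m=n$ gives $\tin\bigl(L(K_{m,m})\bigr)=m$, so it suffices to check that each $L(K_{m,m})$ is $K_{1,3}$-free and has no distance-$2$ packing of two cycles.

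First I would check that $L(K_{m,m})$ is $K_{1,3}$-free, which is the familiar statement that line graphs are claw-free; in the rook's-graph model it is one line. If a vertex~$v=(r,c)$ had three pairwise non-adjacent neighbours, each of them would lie in row~$r$ or in column~$c$, and by pigeonhole two of the three would share a row or a column, hence be adjacent, a contradiction.

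Next I would show that any two distinct cycles $C_1,C_2$ of $L(K_{m,m})$ are joined by a path of length at most~$2$, which means that $\{C_1,C_2\}$ is never a distance-$2$ packing. Pick any vertices $u=(r_1,c_1)\in V(C_1)$ and $v=(r_2,c_2)\in V(C_2)$. If $r_1=r_2$ or $c_1=c_2$ then $uv$ is an edge, giving a path of length~$1$. Otherwise $r_1\neq r_2$ and $c_1\neq c_2$, and the vertex $w\coloneqq(r_1,c_2)$ (which exists, since the vertex set is all of $[m]\times[m]$) is adjacent to~$u$ because it shares the row~$r_1$ and to~$v$ because it shares the column~$c_2$, so $u\,w\,v$ is a path of length~$2$ from $C_1$ to $C_2$.

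Putting the pieces together, each $L(K_{m,m})$ lies in the class of $K_{1,3}$-free graphs with no distance-$2$ packing of two cycles while $\tin\bigl(L(K_{m,m})\bigr)=m$, so this class has unbounded tree-independence number. I do not expect a genuine obstacle here: the argument is short, and the only points needing care are verifying the rook's-graph identification of $L(K_{m,m})$ and confirming that ``a path of length at most~$d$ between two cycles'' is the intended reading of the distance-$d$ packing definition, under which the length-$\le 2$ path constructed above indeed rules out a distance-$2$ packing.
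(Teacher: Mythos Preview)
Your proposal does not prove the stated Theorem~\ref{biclique} at all; instead, you \emph{invoke} Theorem~\ref{biclique} as a black box and use it to prove Theorem~\ref{thm:unboundedtin} (that the class of $K_{1,3}$-free graphs without a distance-$2$ packing of two cycles has unbounded tree-independence number). Theorem~\ref{biclique} is a cited result of Dallard et~al.\ and is not proved in the paper either, so there is no ``paper's own proof'' to compare against. If your assignment was genuinely to establish $\tin(L(K_{m,n}))=m$, you have not begun: that requires both an explicit tree-decomposition of $L(K_{m,n})$ with independence number~$m$ and a lower-bound argument (for instance via a bramble-type obstruction or the Ramsey-style argument in the Dallard et~al.\ paper).

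If, on the other hand, the intended target was Theorem~\ref{thm:unboundedtin}, then your argument is correct and essentially identical to the paper's. Both proofs cite Theorem~\ref{biclique} for the tree-independence number, note that line graphs are $K_{1,3}$-free, and observe that $L(K_{n,n})$ has diameter at most~$2$ (hence no distance-$2$ packing of two cycles). The only cosmetic difference is that you phrase the diameter argument in the rook's-graph coordinate model $(r,c)\in[m]\times[m]$, whereas the paper reasons directly with edges of $K_{n,n}$: two edges either share an endpoint or are joined by a third edge. These are the same argument in different clothing.
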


\unboundedtin*

\begin{proof}
   It is well known that line graphs are $K_{1,3}$-free (see~\cite{BrandstadtLS1999}). Thus, it is sufficient to show that for every positive integer~$n$, $L(K_{n,n})$ has no distance-$2$ packing of two cycles.  This follows from the fact that for every two vertices $e,f\in V(L(K_{n,n}))$, $\dist_{L(K_{n,n})}(e,f)\le 2$, because either they share an end in~$K_{n,n}$ or there is an edge~$g$ sharing ends with both~$e$ and~$f$.
\end{proof}

\section{Discussion}
\label{sec:conclusion}

We now present several open problems regarding our results.
The first one is about the tightness of the function $f(k,\ell)$ in Theorem~\ref{thm:main1}.
Recall that the following conjecture is true for a constant~$\ell$.

\begin{conjecture}\label{conj:long}
    There exists a function ${f(k,\ell) = \mathcal{O}(k\ell+ k\log k)}$ such that for all integers ${k\geq1}$ and ${\ell\geq3}$, every graph~$G$ contains either an induced packing of~$k$ cycles of length at least~$\ell$ or a set~$X$ of at most~$f(k,\ell)$ vertices such that~${G-B_G(X,1)}$ has no cycle of length at least~$\ell$. 
\end{conjecture}

We remark that there is no function $f(k,\ell)=o(k\ell+k\log k)$ that makes Conjecture~\ref{conj:long} true.
Mousset et al.~\cite{MoussetNSW17} showed that every graph contains either~$k$ vertex-disjoint $\ell$-cycles or a set of at most $f(k,\ell)=O(k\ell+k\log k)$ vertices which hits all $\ell$-cycles, and that the function~$f$ is best possible up to a multiplicative constant.
For an arbitrary graph~$G$, let~$H$ be the graph obtained from~$G$ by subdividing each edge twice.
Then every set of vertex-disjoint cycles in~$H$ is an induced packing of cycles in~$H$.
Note that~$H$ has an induced packing of~$k$ $(3\ell)$-cycles if and only if~$G$ has~$k$ vertex-disjoint $\ell$-cycles.
In addition, the minimum size of a set $X\subseteq V(G)$ which hits all $\ell$-cycles in~$G$ is equal to the minimum size of a set $X'\subseteq V(H)$ such that $H-B_H(X',1)$ has no $(3\ell)$-cycle.
Hence, there is no function $f(k,\ell)=o(k\ell+k\log k)$ that makes Conjecture~\ref{conj:long} true.

As generalisations of Theorem~\ref{thm:main1}, one may restrict the cycles to be packed to satisfy various constraints. 
The following constraint considers cycles containing a vertex contained in a fixed set of vertices.
For a graph~$G$ and a set~${S \subseteq V(G)}$, an \emph{$S$-cycle} in~$G$ is a cycle containing a vertex in~$S$. 

\begin{conjecture}
    There exists a function~${f(k) = \mathcal{O}(k\log k)}$ such that for every positive integer $k$, every graph~$G$, and every $S\subseteq V(G)$, $G$ contains either an induced packing of~$k$ $S$-cycles or a set~$X$ of at most~$f(k)$ vertices such that $G-B_G(X,1)$ has no $S$-cycles.
\end{conjecture}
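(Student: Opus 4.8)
The plan is to retrace the proof of Theorem~\ref{thm:main1}, replacing ``cycle'' by ``$S$-cycle'' in the construction of a coarse ear-decomposition, and to induct on~$k$. The base case $k=1$ is immediate. For the inductive step I would first \emph{reduce to the case that every $S$-cycle of~$G$ has length at least~$5$}: if $G$ has an $S$-cycle~$C$ of length at most~$4$, apply induction to $G-B_G(C,1)$ with the set $S\setminus B_G(C,1)$ and parameter $k-1$. As in the proof of Theorem~\ref{thm:main1}, an induced packing of $k-1$ cycles obtained this way consists of $S$-cycles, is disjoint from and non-adjacent to~$C$, and hence yields an induced packing of $k$ $S$-cycles; while any $S$-cycle surviving in $G-B_G(V(C)\cup X,1)$ would be an $(S\setminus B_G(C,1))$-cycle surviving in the smaller instance, a contradiction.

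Next I would define an \emph{$S$-coarse ear-decomposition} by modifying conditions~\ref{cond:coarse1}--\ref{cond:coarse3}: the cyclic ears $P_{i,1}$ are shortest \emph{$S$-cycles} (of type~1 if one meets $V(H_{i-1,\ell_{i-1}})$ in exactly one vertex, otherwise of type~2 in $G-Z_{i-1,\ell_{i-1}}$); condition~\ref{cond:coarse1} asks that $G-Z_{i-1,\ell_{i-1}}$ has no $H_{i-1,\ell_{i-1}}$-path whose addition lies on an $S$-cycle; and among the $H$-paths $P_{i,j}$ with $j\geq 2$ I would give priority to those that create an $S$-cycle $C_{i,j}$, so that the cycles recorded by the auxiliary graphs $O_{\mathcal{H}}$ and $U_{\mathcal{H}}$ are $S$-cycles. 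Taking a maximal such~$\mathcal{H}$, I would reprove the analogues of Lemmas~\ref{lem:unique}--\ref{lem:no edge2} and Proposition~\ref{prop:induced}, then of Lemmas~\ref{lem:packing1} and~\ref{lem:packing2}, to conclude that $O_{\mathcal{H}}$ and $U_{\mathcal{H}}$ are forests; thus if $\mathcal{H}$ has at least $2k-1$ cyclic ears or $U_{\mathcal{H}}$ has at least $2k-1$ vertices, an independent set in the corresponding forest gives an induced packing of $k$ $S$-cycles. Otherwise I would prune $\mathcal{H}$ to a subcubic graph $\mathcal{H}'$ as in the proof of Theorem~\ref{thm:main1}, apply Theorem~\ref{thm:simonovitz} (or Lemma~\ref{lem:twocycles}) to $\mathcal{H}'$, and then set $X$ to be $B_{\mathcal{H}}(V_{\geq 3}(\mathcal{H}),2)$ together with one vertex of each cyclic ear (plus a bounded set of ``orphan'' $S$-vertices, see below); maximality of~$\mathcal{H}$ should force $G-B_G(X,1)$ to contain no $S$-cycle, since such a cycle would produce a new cyclic ear or a new $S$-creating $H$-path.

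The main obstacle is the girth hypothesis. Every structural lemma in Section~\ref{sec:coarse} requires girth at least~$5$, but the reduction above only gives that the shortest \emph{$S$-cycle} has length at least~$5$; short $S$-free cycles persist and can interact with $H$-paths in ways those proofs exclude via a global girth bound. I see two routes, and expect one of them to be the real work. The first is to contract each short $S$-free cycle (or each maximal $2$-connected $S$-free subgraph) to a single non-$S$ vertex before building $\mathcal{H}$: this distorts distances by a bounded amount and one can check it preserves induced packings of $S$-cycles in both directions, but propagating the ``ball of radius~$1$'' in the conclusion back through this distortion is delicate. The second is to re-examine each of Lemmas~\ref{lem:lengthy}--\ref{lem:no edge2} and Proposition~\ref{prop:induced} and verify that the short cycle they build to reach a contradiction is always an $S$-cycle --- which should hold because the offending cycle always runs through a cyclic $S$-ear or an $S$-creating path --- so that the weaker $S$-girth hypothesis suffices. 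A secondary obstacle is ensuring that the vertex-disjoint cycles produced by Theorem~\ref{thm:simonovitz} inside~$\mathcal{H}'$ are $S$-cycles; I would address this by arranging the ear-selection rules so that each bounded-radius ball around a branch vertex, and each short ear, carries an $S$-vertex, and by adding to~$X$ the $\mathcal{O}(k)$ exceptional (``orphan'') $S$-vertices where this fails, which keeps $\abs{X}=\mathcal{O}(k\log k)$.
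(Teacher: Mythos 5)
This statement is posed as an open \emph{conjecture} in \S\ref{sec:conclusion} of the paper; the paper offers no proof of it, so there is nothing to compare your argument against, and your proposal must stand on its own. It does not: the central difficulty you yourself flag --- the girth hypothesis --- is a genuine gap, not a technicality, and neither of your two proposed routes closes it. The reduction at the start of the proof of Theorem~\ref{thm:main1} deletes short cycles until the \emph{global} girth is at least~$5$, and every structural lemma in \S\ref{sec:coarse} uses that global bound to exclude arbitrary short cycles, not just $S$-cycles. For instance, in Lemma~\ref{lem:lengthy} the case $(p,q)=(r,s)$ is killed because $Q$ together with a subpath of $P_{p,q}$ would form a cycle of length at most~$4$; under your weaker hypothesis (only $S$-cycles have length at least~$5$) that cycle may simply avoid~$S$ and no contradiction arises, so the lemma and everything downstream of it (Lemmas~\ref{lem:attachment}--\ref{lem:no edge2}, Proposition~\ref{prop:induced}, Lemmas~\ref{lem:packing1} and~\ref{lem:packing2}) breaks. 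Your first route, contracting maximal $2$-connected $S$-free subgraphs, does not distort distances by a bounded amount --- an $S$-free cycle can be arbitrarily long --- and it is unclear that a radius-$1$ ball in the contracted graph pulls back to a radius-$1$ ball in~$G$, which is exactly what the conclusion requires.

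A second unresolved gap is the endgame. Theorem~\ref{thm:simonovitz} applied to the pruned cubic graph $\mathcal{H}'$ yields $k$ vertex-disjoint cycles of $\mathcal{H}'$ with no guarantee that any of them meets~$S$; the paper's contradiction (``$G$ has no induced packing of $k$ cycles, hence an edge joins two of them'') is unavailable when the hypothesis is only that $G$ has no induced packing of $k$ \emph{$S$-cycles}. Your fix --- arranging the ear-selection so that every short ear and every ball around a branch vertex ``carries an $S$-vertex,'' plus $\mathcal{O}(k)$ orphan vertices added to~$X$ --- is not an argument: it is not clear such a selection rule exists, that it is compatible with the shortest-path priorities that make conditions~\ref{cond:coarse1}--\ref{cond:coarse3} work, or that the number of exceptions is $\mathcal{O}(k)$ rather than unbounded. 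In short, the proposal is a plausible programme whose two hardest steps are left open, which is consistent with the statement being posed by the authors as a conjecture rather than proved.
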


We can also consider constraints in terms of the parity of the length of cycles, for example cycles of even length. 

\begin{conjecture}
    There exists a function~${f(k) = \mathcal{O}(k\log k)}$ such that for every positive integer~${k}$, every graph~$G$ contains either an induced packing of~$k$ cycles of even length or a set~$X$ of at most~$f(k)$ vertices such that~${G-B_G(X,1)}$ has no cycles of even length.
\end{conjecture}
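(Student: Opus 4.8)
The plan is to run the machinery of Sections~\ref{sec:coarse}--\ref{sec:maintheorem} ``relative to even cycles'': I would build a coarse ear-decomposition that only grows along even cycles, and replace Simonovits' theorem (Theorem~\ref{thm:simonovitz}) by the classical Erd\H{o}s--P\'osa theorem for even cycles, which yields in every graph either $k$ vertex-disjoint even cycles or an $\mathcal{O}(k\log k)$-vertex set meeting all even cycles. One useful feature is that the target is weaker than in Theorem~\ref{thm:main1}: we only need $G-B_G(X,1)$ to have no \emph{even} cycle, so odd cycles, and triangles in particular, are harmless and need not be hit.

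Arguing by induction on $k$, I would first reduce to graphs with no $4$-cycle. If $G$ has a $4$-cycle $C$ (the only short even cycle), recurse on $G-B_G(C,1)$ with parameter $k-1$ and either add $C$ to the returned induced packing of $k-1$ even cycles (still an induced packing, since no edge leaves $B_G(C,1)$) or take $V(C)$ together with the returned hitting set. So we may assume $G$ has no $4$-cycle; in particular every even cycle has length at least $6$. Unlike triangles in the proof of Theorem~\ref{thm:main1}, triangles cannot be peeled off here by decreasing $k$, so they are kept in $G$ and the coarse ear-decomposition below will simply be forbidden to use any odd cycle as an ear.

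Next I would define a coarse ear-decomposition $\mathcal H$ as in Section~\ref{sec:coarse}, with two changes: each $P_{i,1}$ is a \emph{shortest even} cycle of $G-Z_{i-1,\ell_{i-1}}$ of the relevant type, and each later ear $P_{i,j}$ is a shortest $H_{i,j-1}$-path of $G-Z_{i,j-1}$ subject to the extra condition that $H_{i,j}$ has an even cycle through $P_{i,j}$. Then I would reprove the structural results of Section~\ref{sec:coarse}: inducedness of a maximal such $\mathcal H$ (Proposition~\ref{prop:induced}) and Lemmas~\ref{lem:lengthy}--\ref{lem:no edge2}, carrying parities through each concatenation-of-paths argument. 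The auxiliary forests $O_\mathcal H$ and $U_\mathcal H$ are defined verbatim, but now an independent set of size $k$ in either yields $k$ vertex-disjoint \emph{even} cycles, and the reasoning behind Corollary~\ref{cor:close ends} and Lemma~\ref{lem:attachment} shows, unchanged, that an edge of $\mathcal H$ between two of these would be of the form $a_{p,2}b_{p,2}$ and hence impossible, so the packing is induced in $G$. Thus either $\mathcal H$ already yields the desired induced packing of $k$ even cycles, or $\mathcal H$ has few blocks and few bad ears and we pass to the subcubic graph $\mathcal H'$ exactly as in the proof of Theorem~\ref{thm:main1}. Applying Erd\H{o}s--P\'osa for even cycles to $\mathcal H'$: either it returns $k$ vertex-disjoint even cycles, which the chord argument upgrades to an induced packing in $G$; or it returns an even-cycle transversal $Z'$ with $|Z'|=\mathcal{O}(k\log k)$, and since $\mathcal H'$ is subcubic with cycle rank at least half its number of branch vertices while $\mathcal H'-Z'$ is a cactus whose blocks are edges and odd cycles, one deduces that $\mathcal H'$ has $\mathcal{O}(k\log k)$ branch vertices. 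From here the argument of Theorem~\ref{thm:main1} finishes essentially verbatim: $\mathcal H$ has $\mathcal{O}(k\log k)$ branch vertices, one lets $X$ consist of the non-admissible vertices of $\mathcal H$ together with one vertex of each $P_{i,1}$ so that $\mathcal H-X$ is a forest, and maximality of $\mathcal H$ forces $G-B_G(X,1)$ to contain no even cycle (such a cycle would give an $\mathcal H$-path or a new disjoint shortest even cycle, either way extending $\mathcal H$).

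I expect the technical heart to lie in two places. First, one may not suppress the degree-$2$ vertices of $\mathcal H'$ before invoking even-cycle Erd\H{o}s--P\'osa, since suppression flips parities; so the implication ``many branch vertices $\Rightarrow$ $k$ disjoint even cycles'' must be obtained by running Erd\H{o}s--P\'osa on $\mathcal H'$ itself and controlling its cactus structure directly. This is also precisely why the ear-decomposition must be made even-aware: a cactus of pentagons has girth $5$, arbitrarily many degree-$3$ vertices, and no even cycle whatsoever, so an arbitrary subcubic graph with many branch vertices need not contain two disjoint even cycles --- only one built along even cycles is forced to. Second, and most delicate, re-establishing Proposition~\ref{prop:induced} and Lemmas~\ref{lem:lengthy}--\ref{lem:no edge2} for the even-completing decomposition with triangles present (so girth at least $5$ cannot simply be assumed): every step of Section~\ref{sec:coarse} that currently turns a short $\mathcal H$-path or shortcut into a girth-violating short cycle must be replaced by a parity argument producing either a shorter even cycle or a violation of the even-completing choice of an ear.
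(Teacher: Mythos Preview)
This statement appears in the paper as an open \emph{conjecture} (Section~\ref{sec:conclusion}); the paper offers no proof, so there is nothing to compare your argument against. What remains is to check whether your proposal actually proves the conjecture.

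The final maximality step has a genuine gap. You assert that if $G-B_G(X,1)$ contained an even cycle $D$, then $D$ would yield either a fresh even cycle disjoint from $\mathcal H$ or an $\mathcal H$-path extending $\mathcal H$, contradicting maximality. But when $D$ meets $\mathcal H$ in at least two admissible vertices, the $\mathcal H$-paths it contains need not be even-completing. Since your $\mathcal H$ is bipartite (each even-completing ear preserves the bipartition), a parity count along $D$ shows only that the number of odd-completing $\mathcal H$-paths on $D$ is even; it may well equal $2$ with no even-completing path present. Concretely, take $G$ to be a $2n$-cycle $C=v_0v_1\cdots v_{2n-1}v_0$ together with two internally disjoint paths $Q_1,Q_2$ of equal odd length at least $n$ joining $v_0$ and $v_2$. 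Then $G$ has girth at least $5$, its shortest even cycle is $C$, and each $Q_i$ is an odd-completing $\mathcal H$-path (the two cycles through $Q_i$ alone have lengths $|Q_i|+2$ and $|Q_i|+2n-2$, both odd). Hence your $\mathcal H$ terminates at $C$ with no branch vertices, and $X$ is a single vertex of $C$. Yet $D\coloneqq Q_1\cup Q_2$ is an even cycle meeting $C$ only at $v_0,v_2$, so $D\subseteq G-B_G(\{v\},1)$ whenever $v\notin\{v_{2n-1},v_0,v_1,v_2,v_3\}$. Thus for most choices of the ``arbitrary'' vertex, $G-B_G(X,1)$ still has an even cycle. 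Worse, by attaching further such pairs $Q_3,Q_4$ at $v_a,v_{a+2}$ far from $v_0,v_2$ on $C$, no single vertex of $C$ kills all the resulting even cycles $Q_1\cup Q_2$ and $Q_3\cup Q_4$; in that graph these two cycles form an induced packing of two even cycles that your procedure never inspects, so the algorithm outputs neither a packing nor a valid $X$. Addressing this seems to require tracking odd-completing ears as well, at which point the subcubic reduction no longer controls even cycles in the way you need; this is exactly why the paper leaves the statement as a conjecture.
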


It is well known that cycles of odd length do not satisfy such a duality even in the non-induced setting \cite{Reed99}.
Gollin et al.~\cite{GollinHKOY2022+} characterised for which positive integers~$m$ and non-negative integers~${\ell < m}$, there exists a function~$f$ such that every graph either contains a packing of~$k$ vertex-disjoint cycles of length~$\ell \bmod{m}$ or a set of size at most~$f(k)$ that hits all such cycles. 
We ask whether such a result extends to the induced setting.

\begin{question}\label{quest:modulo}
    For which positive integers~$m$ and non-negative integers~${\ell < m}$ is there a function~$f$ such that for every positive integer~${k}$, every graph~$G$ contains either an induced packing of~$k$ cycles of length~${\ell\bmod{m}}$ or a set~$X$ of at most~$f(k)$ vertices such that $G-B_G(X,1)$ has no cycles of length $\ell\bmod{m}$?
\end{question}

Let us finish by discussing these type of questions in the context of induced minors and induced topological minors.
For a graph~$H$ and a positive integer~$k$, let~$kH$ denote the disjoint union of~$k$ copies of~$H$. 
A fundamental result of graph minor theory by Robertson and Seymour \cite{RobertsonS1986} states that a graph~$H$ is planar if and only if there exists a function~$f$ such that every graph~$G$ either contains $kH$ as a minor or a set~${X \subseteq V(G)}$ of size at most~${f(k)}$ such that~${G - X}$ is $H$-minor-free. 
Clearly, the fact that for non-planar graphs~$H$ such a duality does not hold extends to induced minors. 
For planar graphs, we conjecture the following. 

\begin{conjecture}
    \label{conj:planarinducedminorEP}
    For each planar graph~$H$, there exist a function~$f$ and a constant~$d$ such that for every positive integer~${k}$, every graph~$G$ contains either $kH$ as an induced minor, or a set~$X$ of at most $f(k)$ vertices such that~${G-B_G(X,d)}$ is $H$-induced-minor-free. 
\end{conjecture}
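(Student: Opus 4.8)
This is stated as a conjecture, so what follows is the strategy I would pursue rather than a complete proof.

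\textbf{Reductions to a locally sparse graph.}
The plan is to induct on $k$, at each step either peeling off a ``compact'' copy of $H$ or reducing to a graph without any small dense structure. Call an $H$-induced-minor model $(B_v)_{v\in V(H)}$ \emph{compact} if $\abs{\bigcup_v B_v}\le s$ for a threshold $s=s(H,d)$ to be chosen. If $G$ has a compact model, set $W\coloneqq\bigcup_v B_v$ and recurse on $G'\coloneqq G-B_G(W,d)$ with parameter $k-1$. Note $G[W]$ itself has $H$ as an induced minor, obtained by contracting the $B_v$ with no deletions. If $G'$ contains $(k-1)H$ as an induced minor, then since $\dist_G(W,V(G'))>d\ge 1$ there are no edges of $G$ between $W$ and $V(G')$, so the model in $G[W]$ together with the $(k-1)H$-model in $G'$ yields $kH$ as an induced minor of $G$. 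Otherwise $G'$ has a set $X'$ of at most $f(k-1)$ vertices with $G'-B_{G'}(X',d)$ being $H$-induced-minor-free; then $X\coloneqq X'\cup W$ works, since $G-B_G(X,d)$ is an induced subgraph of $G'-B_{G'}(X',d)$ (using $\dist_{G'}\ge\dist_G$) and $H$-induced-minor-freeness is closed under taking induced subgraphs. Iterating this move, together with an analogous move that removes balls around short cycles, lets us assume that $G$ has large girth, say girth more than $10d$, and has no compact $H$-induced-minor model; by an argument in the spirit of Lemma~\ref{lem:girth}, this forces every ball of radius $\mathcal{O}(d)$ in $G$ to have bounded tree-width.

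\textbf{A coarse dichotomy.}
Next I would establish (or invoke) a coarse analogue of the Grid Minor / Flat Wall Theorem for locally sparse graphs: such a $G$ either has bounded \emph{coarse tree-width}, i.e.\ admits a tree-decomposition each of whose bags has the form $B_G(Z,r)$ for a bounded-size set $Z$ and bounded $r$, or it contains a \emph{fat wall} --- a subdivision of an $N\times N$ wall in which the branch paths are pairwise at distance more than $d$ in $G$. Results of this flavour relating large-scale structure to metric minors appear in recent coarse graph theory, e.g.\ \cite{GeorgakopoulosP2023,AlbrechtsenHTJKW2024}, and Korhonen's induced grid minor theorem \cite{Korhonen2023} is the bounded-degree prototype; what is needed here is a version that retains control of inducedness in the locally sparse regime.

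\textbf{Extracting $kH$ or the bounded set.}
In the fat-wall case, since $H$ is planar it embeds into a sufficiently large wall; fatness provides room for $k$ vertex-disjoint embeddings, and the distance-more-than-$d$ separation of the branch paths together with the girth condition rules out chords between distinct embeddings and within each embedding, so $G$ contains $kH$ as an induced minor. In the bounded-coarse-tree-width case, I would adapt the Robertson--Seymour Erd\H{o}s--P\'osa argument \cite{RobertsonS1986}: because no $H$-induced-minor model is compact, every such model must meet many bags, so a bounded number of the cores $Z$ of the decomposition hits all models; letting $X$ be the union of those cores (plus a bounded set of auxiliary witnesses) makes $G-B_G(X,d)$ be $H$-induced-minor-free.

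\textbf{Main obstacle.}
The crux is the coarse dichotomy: a Flat-Wall-type theorem that simultaneously (i) produces a \emph{fat} wall whose parts are far apart in $G$ and (ii) in the ``small'' alternative yields a tree-decomposition into bounded-radius balls. The grid-minor machinery is not metric, and Korhonen's induced-grid-minor theorem needs bounded degree, whereas large girth does not bound degree; high-degree vertices (whose $\mathcal{O}(d)$-balls are large trees, hence $H$-induced-minor-free only when $H$ has a cycle) will need a separate case analysis, and when $H$ is a forest the whole approach must be reconsidered. Secondary difficulties are quantitative --- bounding $f(k)$ and ensuring the ball-removal reductions terminate after boundedly many steps without blowing up $\abs{X}$ --- and combinatorial --- handling disconnected $H$, which should reduce to the connected case but requires care in the fat-wall embedding step.
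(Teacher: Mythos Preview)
This statement is posed in the paper as an open conjecture; the paper does not attempt a proof. What the paper does do is (i) observe that Theorem~\ref{thm:main1} settles the case~${H=K_3}$, (ii) prove the conjecture when the host graphs are restricted to have bounded tree-independence number (Theorem~\ref{thorem:EPinducedMinorboundedtreealpha}), via a direct adaptation of the standard Robertson--Seymour tree-decomposition argument in which each separating bag is replaced by a maximum independent set in it, and (iii) remark that Korhonen's induced-grid-minor theorem handles the bounded-degree case, while the general case will require a genuinely new idea.

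Your strategy aims at the full conjecture and is a plausible research programme, but it is not a proof: the coarse dichotomy you posit in your second step (fat wall versus tree-decomposition into bounded-radius balls) is not a known theorem, and you correctly flag it as the main obstacle. Beyond that, there is a gap in your bounded-coarse-tree-width branch. You write that ``because no $H$-induced-minor model is compact, every such model must meet many bags, so a bounded number of the cores $Z$ of the decomposition hits all models''. This does not follow: a bag of the form~$B_G(Z,r)$ with~$\abs{Z}$ and~$r$ bounded can still contain arbitrarily many vertices (a high-degree vertex, for instance, has an arbitrarily large radius-$1$ ball even in a tree), so a non-compact $H$-model can sit entirely inside a single bag. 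Large girth guarantees only that such balls have bounded tree-width, not bounded size, and a planar~$H$ can certainly appear as an induced minor inside a bounded-tree-width subgraph of unbounded size. Hence neither ``meets many bags'' nor the conclusion that finitely many cores suffice is justified; you would need a further structural argument controlling how $H$-models can live inside a single ball. This is precisely the difficulty the paper alludes to when it notes that graphs of large tree-independence number need not contain large grids as induced minors, so a grid-minor-style argument cannot close out the general case.
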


Theorem~\ref{thm:main1} proves the special case of~${H = K_3}$.
Next, we show that Conjecture~\ref{conj:planarinducedminorEP} holds when the host graph has bounded tree-independence number. 
The proof operates along the lines of the standard proof of the Erd\H{o}s-P\'{o}sa property for minors in the regime of graphs of bounded tree-width, see for example~\cite[Theorem 12.6.5]{Diestel2017}. 

\begin{theorem}
    \label{thorem:EPinducedMinorboundedtreealpha}
    There exists a function~$f(k,w) = \mathcal{O}(w k \log k)$ such that for every pair of positive integers~$k$ and~$w$, every graph~$G$ with tree-independence number at most~$w$ either contains $kH$ as an induced minor or a set~$X$ of at most~$f(k,w)$ vertices such that~${G - B_G(X,1)}$ is $H$-induced-minor-free. 
\end{theorem}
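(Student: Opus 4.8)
The plan is to mimic the classical proof of the Erd\H{o}s--P\'osa property for $H$-minors on graphs of bounded tree-width (as in \cite[Theorem~12.6.5]{Diestel2017}), replacing bags of bounded \emph{size} by bags of bounded \emph{independence number} and replacing vertex deletions by deletions of radius-$1$ balls around small sets. Fix a tree-decomposition $(T,\beta)$ of $G$ with $\alpha(G[\beta(t)])\le w$ for every $t\in V(T)$, root $T$ at a node $r$, and for $t\in V(T)$ write $G_t$ for the subgraph of $G$ induced by the union of the bags in the subtree of $T$ rooted at $t$. The one structural fact I will use over and over is that, since $\alpha(G[\beta(t)])\le w$, a maximal independent set $D_t$ of $G[\beta(t)]$ has $|D_t|\le w$ and dominates $G[\beta(t)]$, so $\beta(t)\subseteq B_G(D_t,1)$. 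I argue by induction on $k$: for $k=1$ take $X=\emptyset$, and I will assume $H$ connected (the disconnected case being handled by a standard argument over the components of $H$).

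For $k\ge 2$ we may assume $G$ has an $H$-induced minor (otherwise $X=\emptyset$ works) but no $kH$-induced minor. Choose a node $t_0$ that is lowest in $T$ with the property that $G_{t_0}$ contains an $H$-induced minor. Since $H$ is connected and each component of $G_{t_0}-\beta(t_0)$ lies inside $G_{t'}$ for a child $t'$ of $t_0$, every $H$-induced minor of $G_{t_0}$ must meet $\beta(t_0)$; fix a vertex-minimal one, $M_0$. Now put $D:=D_{t_0}\cup\bigl(V(M_0)\cap\beta(t_0)\bigr)$, let $G'':=G-B_G(D,1)$, which still carries the tree-decomposition inherited from $(T,\beta)$ (so the induction hypothesis applies), and — once we know $G''$ has no $(k-1)H$-induced minor — obtain from the induction hypothesis a set $X'$ with $|X'|\le f(k-1,w)$ such that $G''-B_{G''}(X',1)$ is $H$-induced-minor-free. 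Set $X:=X'\cup D$. Because $\beta(t_0)\subseteq B_G(D_{t_0},1)\subseteq B_G(D,1)$, the graph $G-B_G(X,1)$ is an induced subgraph of $G''-B_{G''}(X',1)$, hence $H$-induced-minor-free, and $|X|\le f(k-1,w)+|D|$.

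The heart of the argument is the claim that $G''$ has no $(k-1)H$-induced minor. Suppose $M_1,\dots,M_{k-1}$ are pairwise non-adjacent $H$-induced minors in $G''$; I will show that $\{M_0,M_1,\dots,M_{k-1}\}$ witnesses a $kH$-induced minor of $G$, contradicting our assumption. Each $M_i$ with $i\ge1$ avoids $B_G(D,1)\supseteq\beta(t_0)$; if $M_i$ met $V(G_{t_0})$ then, being connected and avoiding $\beta(t_0)$, it would (by the separation property of $(T,\beta)$) be contained in $G_{t'}$ for a child $t'$ of $t_0$, contradicting the choice of $t_0$. Hence $V(M_i)\cap V(G_{t_0})=\emptyset$, so each $M_i$ is disjoint from $M_0\subseteq V(G_{t_0})$. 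For non-adjacency: any edge from a vertex $v\in V(M_0)\subseteq V(G_{t_0})$ to a vertex $u\in V(M_i)\setminus V(G_{t_0})$ forces $v\in\beta(t_0)$ by the separation property, so $v\in V(M_0)\cap\beta(t_0)\subseteq D$ and therefore $u\in B_G(D,1)$, contradicting $u\in V(G'')$. This proves the claim, and hence $f(k,w)\le f(k-1,w)+|D|$.

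The step I expect to be the main obstacle — and the only place where the argument genuinely departs from the bounded-tree-width case — is bounding $|D|$, equivalently $|V(M_0)\cap\beta(t_0)|$: a bag may be arbitrarily large, being controlled only through its independence number, while the set $D$ we throw into $X$ must be small. The remedy is to use the minimality of $M_0$ together with $\alpha(G[\beta(t_0)])\le w$. In a vertex-minimal induced-minor model each branch set meets $\beta(t_0)$ in a union of ``Steiner-tree-like'' pieces, every vertex of which is a cut-vertex of that branch set or an essential port/attachment; minimality bounds the number of such essential vertices per branch set by $\mathcal{O}_H(w)$, and — since $G[\beta(t_0)]$ has no independent set of size exceeding $w$ — every induced path inside $\beta(t_0)$ has length $\mathcal{O}(w)$, so each piece has $\mathcal{O}(w)$ vertices. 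Consequently $|V(M_0)\cap\beta(t_0)|=\mathcal{O}_H(\operatorname{poly}(w))$ and $|D|=\mathcal{O}_H(\operatorname{poly}(w))$, which closes the recursion with $f(k,w)=(k-1)\cdot|D|_{\max}$, a bound of the claimed order $\mathcal{O}(wk\log k)$ with the implied constant depending on $H$. Everything else is the routine Erd\H{o}s--P\'osa induction; it is this trace bound on a minimal model over a single bag that has to be carried out with care.
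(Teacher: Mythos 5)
Your overall architecture is sound and genuinely different from the paper's. The paper never needs to intersect a model with a bag: it runs a three-way case analysis over the \emph{edges} of the tree-decomposition (both sides $H$-induced-minor-free; both sides containing $H$; otherwise orient towards the side containing $H$ and take a sink node), and the only vertices it ever inserts into $X$ are maximal independent sets of whole bags or adhesion sets, whose radius-$1$ balls swallow those sets because they dominate them; this yields the recursion $f(k,w)=2f(k-1,w)+w$. Your rooted ``lowest subtree containing a model'' argument is the other classical route, and its routine parts --- the choice of $t_0$, the separation argument showing each $M_i$ is disjoint from $V(G_{t_0})$ and non-adjacent to $M_0$, and the assembly $X=X'\cup D$ --- are correct as written (modulo the deferred disconnected-$H$ case, which for \emph{induced} minors needs an extra word about non-adjacency between the models of different components of $H$; the paper glosses over the same point). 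If your key estimate held, your recursion $f(k,w)=f(k-1,w)+|D|$ would even be linear in $k$, whereas the paper's recursion solves to $(2^{k-1}-1)w$.

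The gap is exactly where you predict it: the bound on $\abs{V(M_0)\cap\beta(t_0)}$, and what you write there is a sketch, not a proof. In particular, the claim that the trace of $M_0$ on the bag splits into induced paths of length $\mathcal{O}(w)$ is unjustified: vertex-minimality of an induced-minor model only forces every non-attachment vertex of a branch set to be a cut vertex of that branch set, and such branch sets need not be paths or even trees, so the pieces of $V(M_0)\cap\beta(t_0)$ are general induced subgraphs of $G[\beta(t_0)]$, not induced paths. What you actually need is that every induced subgraph of a vertex-minimal model of $H$ with independence number at most $w$ has $\mathcal{O}_H(\mathrm{poly}(w))$ vertices; the natural route is to first show that such a model has clique number bounded by a function of $H$ alone (at most $\omega(H)$ branch sets can pairwise meet a clique, and a minimality argument bounds a clique inside a single branch set), and then invoke Ramsey to get $\abs{V(M_0)\cap\beta(t_0)}\le R(c(H)+1,w+1)=\mathcal{O}_H(w^{c(H)})$. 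None of this is carried out in your proposal, and even if it were, the resulting bound $f(k,w)=\mathcal{O}_H(k\cdot w^{c(H)})$ is not the advertised $\mathcal{O}(wk\log k)$ unless you can force the trace down to $\mathcal{O}(w)$, which your argument does not deliver. The paper's proof sidesteps this entire difficulty precisely because it never has to control the intersection of a model with a bag.
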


\begin{proof}
    We define~${f(1,w) \coloneqq 0}$ and~${f(k,w) \coloneqq 2f(k-1,w) + w}$. 

    Let~$G$ be a graph with tree-independence number~$w$ and let~$(T,\beta)$ be a tree-decomposition of~$G$ with independence number at most~${w}$. 
    For every edge~${e = t_1t_2 \in E(T)}$ and each~${i \in [2]}$, let~$T_{e,t_i}$ denote the the component of~${T - e}$ containing~$t_i$ and let~$G_{e,t_i}$ denote the graph~${G\left[ \bigcup_{t \in V(T_{e,t_i})}\beta(t) \right] - \beta(t_{3-i})}$. 

    If there is an edge~${e = t_1t_2}$ for which both $G_{e,t_1}$ and~$G_{e,t_2}$ are $H$-induced-minor-free, then let~$X$ be a maximal independent set in~${G[\beta(t_1) \cap \beta(t_2)]}$. 
    Observe that since~$B_G(X,1)$ contains~${\beta(t_1) \cap \beta(t_2) = V(G)\setminus V(G_{e,t_1} \cup G_{e,t_2})}$, we obtain that~${G - B_G(X,1)}$ is $H$-induced-minor-free. 
    
    If there is an edge~${e = t_1t_2}$ such that both~$G_{e,t_1}$ and~$G_{e,t_2}$ contains $H$ as an induced minor, then, if at least one of them contains $(k-1)H$ as an induced minor, then~$G$ contains $kH$ as an induced minor. 
    Otherwise, for each~${i \in [2]}$, by induction~$G_{e,t_i}$ contains a set~$X_i$ of size~$f(k-1,w)$ such that~${G_{e,t_i} - B_{G_{e,t_i}}(X_i,1)}$ is $H$-induced-minor-free. 
    Let~$Y$ be an independent set in~${G[\beta(t_1) \cap \beta(t_2)]}$. 
    Since~$B_G(Y,1)$ contains all vertices of~$G$ that are not contained in~${G_{e,t_1} \cup G_{e,t_2}}$, we conclude that~${X \coloneqq X_1 \cup Y \cup X_2}$ is a set of size at most~${2 f(k-1,w) + w}$ such that~${G - B_G(X,1)}$ is $H$-induced-minor-free, as desired. 

    So we may assume that for each edge~${e = t_1t_2}$ of~$T$ precisely one of~$G_{e,t_1}$, $G_{e,t_2}$ contains~$H$ as an induced minor. 
    By orienting the edges of~$T$ towards this end, we find a node~${t \in V(T)}$ such that for all~${e = st \in E(T)}$ incident to~$t$, we have~$G_{e,s}$ is $H$-induced-minor-free. 
    Let~$X$ be an independent set in~$G[\beta(t)]$. 
    Note that~$B_G(X,1)$ contains~$G[\beta(t)]$. 
    Hence, ${G - B_G(X,1)}$ is $H$-induced-minor-free, as desired. 
\end{proof}

Robertson and Seymour's result is similarly proved in graphs of bounded tree-width. 
Since every planar graph is a (induced) minor of some grid, they finish the proof using their grid theorem, which states that every graph of large tree-width has a large grid as a minor. 

Similarly, Korhonnen's grid induced minor theorem~\cite{Korhonen2023} allows us to deduce a version of Conjecture~\ref{conj:planarinducedminorEP} where we restrict the class of host graphs to graphs of bounded maximum degree. 
Dallard et al.~\cite{DallardKKMMW2024} conjectured that a hereditary class of $K_{1,t}$-free graphs of large tree-independence number contains a large grid as an induced minor. 
Since $K_{1,t}$-free graphs with no $kK_3$ induced minors have bounded tree-independence number (Corollary~\ref{cor:K1tfree}), it can be seen as evidence for this conjecture. 
Moreover, if true, this conjecture would imply a version of Conjecture~\ref{conj:planarinducedminorEP} where we restrict the class of host graphs to $K_{1,t}$-free graphs. 
Dallard et al.~\cite[Theorem 7.3]{DallardMS2024} characterised the graphs~$H$ such that the class of $H$-induced-minor-free graphs has unbounded tree-independence number. 
In particular, a graph of large tree-independence number need not contain a large grid as an induced minor. 
As such, in order to resolve Conjecture~\ref{conj:planarinducedminorEP}, a different approach will be necessary. Note that since a graph has $P_n$ as an induced minor if and only if it has $P_n$ as an induced subgraph,  Conjecture~\ref{conj:planarinducedminorEP} holds for all paths, while the class of $P_n$-induced-minor-free graphs has unbounded tree-independence number for $n\ge 4$.

A graph~$H$ is an \emph{induced topological minor} of a graph~$G$ if~$G$ has an induced subgraph isomorphic to a subdivision of~$H$.
Kim and Kwon~\cite{KimK20} asked for which graphs~$H$ is there a function~$f$ such that for every positive integer~$k$, every graph contains either~$k$ vertex-disjoint induced topological minors of~$H$ or a set of at most~$f(k)$ vertices meeting all induced topological minors of~$H$. They showed that such a function exists for $H=C_4$, but does not exist for longer cycles, and Kwon and Raymond~\cite{KwonR2021} analysed more graphs. We ask a similar question for induced packings. Theorem~\ref{thm:main1} also proves the special case of~${H = K_3}$.

\begin{question}
    \label{ques:planarinducedsubdivEP}
    For which graphs~$H$ is there a function~$f$ and a constant~$d$ such that for every positive integer~${k}$, every graph~$G$ contains either $kH$ as an induced topological minor, or a set~$X$ of at most $f(k)$ vertices such that~${G-B_G(X,d)}$ has no $H$ as an induced topological minor?
\end{question}

\printbibliography

\end{document}